\numberwithin{equation}{section}
\newtheorem{thm}{Theorem}[section]
\newtheorem{cor}[thm]{Corollary}
\newtheorem{lem}[thm]{Lemma}
\newtheorem{prop}[thm]{Proposition}
\newtheorem{rem}{Remark}[section]
\newtheorem{example}[thm]{Example}
\newtheorem{defin}[thm]{Definition}
\newcommand{\eq}[1]{(\ref{#1})}
\renewcommand{\Re}{\operatorname{\rm Re}}
\renewcommand{\Im}{\operatorname{\rm Im}}
\newcommand{\beqast}{\begin{eqnarray*}}
\newcommand{\eqast}{\end{eqnarray*}}
\newcommand{\beqa}{\begin{eqnarray}}
\newcommand{\eqa}{\end{eqnarray}}
\newcommand{\bbe}{\begin{equation}}
\newcommand{\ee}{\end{equation}}
\renewcommand{\Re}{\operatorname{\rm Re}}
\renewcommand{\Im}{\operatorname{\rm Im}}
\newcommand{\bC}{{\mathbb C}}
\newcommand{\bE}{{\mathbb E}}
\newcommand{\bQ}{{\mathbb Q}}
\newcommand{\bV}{{\mathbb V}}
\newcommand{\bR}{{\mathbb R}}
\newcommand{\bZ}{{\mathbb Z}}
\newcommand{\cB}{{\mathcal B}}
\newcommand{\cF}{{\mathcal F}}
\newcommand{\cH}{{\mathcal H}}
\newcommand{\cD}{{\mathcal D}}
\newcommand{\cE}{{\mathcal E}}
\newcommand{\cG}{{\mathcal G}}
\newcommand{\cS}{{\mathcal S}}
\newcommand{\cL}{{\mathcal L}}
\newcommand{\cC}{{\mathcal C}}
\newcommand{\cU}{{\mathcal U}}
\newcommand{\cT}{{\mathcal T}}
\newcommand{\barX}{{\bar X}}
\newcommand{\uX}{{\underline X}}
\newcommand{\cEq}{{\mathcal E_q}}
\newcommand{\cEpq}{{\mathcal E^+_q}}
\newcommand{\cEmq}{{\mathcal E^-_q}}
\newcommand{\phipq}{{\phi^+_q}}
\newcommand{\phimq}{{\phi^-_q}}
\newcommand{\tV}{{\tilde V}}
\newcommand{\hG}{{\hat G}}
\newcommand{\Om}{{\Omega}}
\newcommand{\al}{\alpha}
\newcommand{\be}{\beta}
\newcommand{\de}{\delta}
\newcommand{\eps}{\epsilon}
\newcommand{\ka}{\kappa}
\newcommand{\la}{\lambda}
\newcommand{\lp}{\lambda_+}
\newcommand{\lm}{\lambda_-}
\newcommand{\La}{\Lambda}
\newcommand{\mum}{\mu_-}
\newcommand{\mup}{\mu_+}
\newcommand{\mumpr}{\mu'_-}
\newcommand{\muppr}{\mu'_+}
\newcommand{\sg}{\sigma}
\newcommand{\om}{\omega}
\newcommand{\omm}{\om_-}
\newcommand{\omp}{\om_+}
\newcommand{\ze}{\zeta}
\newcommand{\ga}{\gamma}
\newcommand{\gap}{\gamma_+}
\newcommand{\gam}{\gamma_-}
\newcommand{\Ga}{\Gamma}
\newcommand{\barnu}{\bar\nu}
\newcommand{\barDe}{{\bar\Delta}}
\newcommand{\dd}{\partial}
\newcommand{\bfo}{{\bf 1}}
\newcommand{\supp}{{\mathrm{supp}}}
\begin{document}

\title[Efficient  inverse $Z$-transform: sufficient conditions]
{Efficient inverse $Z$-transform: sufficient conditions}
\author[
Svetlana Boyarchenko and
Sergei Levendorski\u{i}]
{
Svetlana Boyarchenko and
Sergei Levendorski\u{i}}

\begin{abstract}
 We derive several sets of sufficient conditions for applicability of the new efficient numerical realization of
 the inverse $Z$-transform. For large $n$, the complexity of the new scheme is dozens of times
 smaller than the complexity of the trapezoid rule. As applications, pricing of European options and single barrier options with discrete monitoring 
 are considered; applications to more general options with barrier-lookback features are outlined. In the case of sectorial transition operators, hence, for symmetric L\'evy models, the proof is straightforward. In the case of non-symmetric L\'evy models,
 we construct a non-linear deformation of the dual space, which makes
the transition operator sectorial, with an arbitrary small opening angle, and justify the new realization. We impose mild conditions which are satisfied 
for wide classes of
 non-symmetric Stieltjes-L\'evy processes.

\end{abstract}


\thanks{
\emph{S.B.:} Department of Economics, The
University of Texas at Austin, 2225 Speedway Stop C3100, Austin,
TX 78712--0301, {\tt sboyarch@utexas.edu} \\
\emph{S.L.:}
Calico Science Consulting. Austin, TX.
 Email address: {\tt
levendorskii@gmail.com}}

\maketitle

\noindent
{\sc Key words:} $Z$-transform, European options, barrier options, lookback options, discrete monitoring,
random walks,
L\'evy processes,  SINH-regular L\'evy processes, Stieltjes-L\'evy processes, KoBoL, NIG, trapezoid rule, sinh-acceleration

\noindent
{\sc MSC2020 codes:} 60-08,42A38,42B10,44A10,65R10,65G51,91G20,91G60

\tableofcontents

\section{Introduction}\label{s:intro}
A well-known popular method for the evaluation of the terms of a series $\{V_n\}_{n=0}^\infty$ is the discrete
Laplace transform ($Z$-transform). For the evaluation of probability distributions, which is
the main topic of interest for us, it is convenient to use the equivalent transform
\bbe\label{Vnze}
\tV(q)=\sum_{n=0}^{\infty}q^nV_n.
\ee 
If there exist $C,R>0$ such that $|V_n|\le C R^n$, then $\tV$ is analytic in an open disc $\cD(0,1/R)$ 
of radius $1/R$ centered at the origin, and
$V_n$ can be recovered using the Cauchy residue theorem 
\bbe\label{izeT0}
V_n=\frac{1}{2\pi i}\int_{|q|=r}q^{-n-1}\tV(q)dq,
\ee
where $r\in (0,1/R)$ is arbitrary. Usually, one evaluates the RHS of \eq{izeT0} applying the trapezoid rule.
The discretization error admits an upper bound via $C(r,\rho,n)\rho^{-N}$, where $\rho>1$, hence, the error decays exponentially as a function of the number $N$ of terms in the trapezoid rule. However, either $\rho$ is very close to 1, or
$C(r,\rho,n)$ is very large for $n$ large.  Therefore, if $n$ is large, one is forced to use a large $N$ to satisfy even a moderate error tolerance.  

 In \cite{EfficientDiscExtremum}, we suggested to alleviate this problem deforming the contour of integration $\{q=re^{i\varphi}\ |\ -\pi<\varphi<\pi\}$
  in \eq{izeT0} into a contour of the form
$\cL_{L; \sg_\ell,b_\ell,\om_\ell}=\chi_{L; \sg_\ell,b_\ell,\om_\ell}(\bR)$, where the conformal map $\chi_{L; \sg_\ell,b_\ell,\om_\ell}$
is of the form
 \bbe\label{eq:sinhLapl}
\chi_{L; \sg_\ell,b_\ell,\om_\ell}(y)=\sg_\ell +i b_\ell\sinh(i\om_\ell+y);
\ee
in \eq{eq:sinhLapl}, $b_\ell>0$, $\sg_\ell\in\bR$ and $\om_\ell\in (-\pi/2,\pi/2)$. 
After the deformation, we make the corresponding change of variables ({\em the sinh-acceleration})
 \bbe\label{izeT0sinh}
V_n=\int_{\bR}\frac{b_\ell}{2\pi }\chi_{L; \sg_\ell,b_\ell,\om_\ell}(y)^{-n-1}\cosh(i\om_\ell+y)\tV(\chi_{L; \sg_\ell,b_\ell,\om_\ell}(y))dy,
\ee
denote by $f_n(y)$ be the integrand on the RHS of \eq{izeT0sinh}, apply the infinite trapezoid rule
\bbe\label{Vn_inf_sinh}
V_n\approx \ze_\ell \sum_{j\in \bZ}f_n(j\ze_\ell),
\ee
and truncate the sum.
Numerical examples in \cite{EfficientDiscExtremum} demonstrate the advantages of the sinh-acceleration vs trapezoid rule
in applications to the  evaluation of the joint cumulative probability distribution function (cpdf) of a L\'evy process and its extremum.
In  the present paper, we explain how to apply the new realization of the inverse $Z$-transform to more general situations
than considered in  \cite{EfficientDiscExtremum}.

In Section \ref{s:trap_vs_sinh}, we derive general prescriptions for the choice of the parameters
of the sinh-acceleration under a general condition on the domain of analyticity $\cU$ of $\tV$ and the rate of growth of $|\tV(q)|$ as $q\to \infty$ in $\cU$, and compare the complexity of the standard numerical $Z$-inversion procedure based on the simplified trapezoid rule and the scheme utilizing the sinh-acceleration. For the error tolerance E-15 and $n$ of the order of several thousand, which is the case
for options of long maturities with daily monitoring,  the complexity of the trapezoid rule is dozens of times larger than the one of the scheme using the sinh-acceleration. In Section \ref{s:oper_form}, we consider evaluation of powers of a bounded operator $P$ acting in a Banach space assuming that
the spectrum $\sg(P)$ of $P$ is a subset of the closure $(\cC_\ga)^c$ of
$\cC_\ga:=\{\rho e^{i\varphi}\ |\ \rho>0, \varphi\in (-\ga,\ga)\}$, where $\ga\in (0,\pi/2)$ (thus, $P$ is sectorial with the opening angle $2\ga$).
As applications, we consider evaluations of expectations in symmetric L\'evy models.
In Section \ref{s:Euro_symm}, we derive explicit integral representations for prices of European options in symmetric L\'evy models.
Under additional conditions on the joint domain of analyticity and behavior at infinity of the characteristic exponent $\psi$ and  
the Fourier transform of the payoff function, an efficient algorithm is formulated. These conditions are satisfied for standard payoffs and (symmetric) sinh-regular L\'evy processes.
It is shown in \cite{SINHregular} and \cite{EfficientAmenable} that all popular classes of L\'evy processes are sinh-regular;
the symmetry condition imposes additional conditions on the parameters of the model. 

The symmetry condition implies that not only the infinitesimal generator of the process is sectorial but the transition operator
$P_t$ is sectorial as well.  In Section \ref{s:Euro_Levy_non-symmetric}, we consider pricing European options in non-symmetric L\'evy models. Let $\cF$ be the Fourier transform. Assuming that $\psi$ admits analytic continuation to the complex plane with two cuts along the imaginary axis, and the first components of almost all trajectories  of the vector field $\bV=(\dd_y\Im\psi(x+iy), -\dd_x\Im\psi(x+iy))$ on  $\bR_x\times(\bR_y\setminus\{0\})$ tend to $\infty$, we construct a non-linear deformation of the dual space which makes $\cF P_t\cF^{-1}$ 
 sectorial, with an arbitrary small opening angle. This trick allows us to prove that the sinh-deformation of the contour in the $Z$-inversion formula is applicable. The key property of the trajectories of $\bV$ is established
  for wide classes of
 non-symmetric Stieltjes-L\'evy models (SL-processes). The class of SL-processes is introduced in \cite{EfficientAmenable},
  and the general definition in terms of Stieltjes measures is the key ingredient of the proof. It is demonstrated in \cite{EfficientAmenable} that essentially all popular classes of L\'evy processes are SL-processes. 
  In Section \ref{s:barrier_Levy_non_symm}, we use appropriate pairs of deformations of $\bC_\xi-$ and $\bC_\eta-$ spaces in the pricing formula
  for single barrier options to justify the applicability of the efficient inverse $Z$-transform.
 In  Section \ref{s:concl}, we summarize the results of the paper and explain how the methodology of the paper can be used
 to price more general options with barrier-lookback features and double barrier options. Technical details are relegated to the appendix. 

\section{Trapezoid rule vs sinh-acceleration}\label{s:trap_vs_sinh}
To simplify the study of the errors of the numerical methods below, we assume that $R=1$.

\subsection{Trapezoid rule}\label{ss:trap_rule}  Introduce $h(q)=h(r,q)=(qr)^{-n}\tV(qr)$ and rewrite \eq{izeT0} as
\bbe\label{izeT}
V_n=\frac{1}{2\pi i}\int_{|q|=1}h(q)\frac{dq}{q},\ n=0,1,2,\ldots
\ee 
  Denote the RHS  of \eq{izeT} by $I(h)$. Usually, one approximates $I(h)$ with
  \bbe\label{defTM}
T_N(h) = (1/N) \sum_{k=0}^{N-1} h(\zeta_N^k),
\ee
where $N>1$ is an integer, and $\zeta_N=\exp(2\pi i/N)$ is the standard primitive $N$-th root of unity.
For $0<a<b$, denote by $\cD_{(a,b)}$ the annulus $\{q\in \bC\ |\ a<|q|<b\}$. For any $\rho\in (1, 1/r)$,
$h(z)$ is analytic in $\cD_{(1/\rho,\rho)}$.
The Hardy norm of $h$ is
\[
\|h\|_{\cD_{(1/\rho,\rho)}}=\frac{1}{2\pi i}\int_{|z|=1/\rho}|h(z)|\frac{dz}{z}+\frac{1}{2\pi i}\int_{|z|=\rho}|h(z)|\frac{dz}{z}.
\]
The error bound is well-known. See e.g, \cite{TrefethenWeidmanTrapezoid14}.
\begin{thm}\label{disctraperror}
Let $h$ be analytic in  $\cD_{(1/\rho,\rho)}$, where $\rho>1$.  
The error of the trapezoid approximation admits the bound
\bbe\label{errtrapgen}
|T_N(h)-I(h)|\le \frac{\rho^{-N}}{1-\rho^{-N}}\|h\|_{\cD_{(1/\rho,\rho)}}.
\ee
\end{thm}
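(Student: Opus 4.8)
The plan is to run the classical Fourier-series (Laurent-series) argument behind trapezoid-rule error bounds. Since $h$ is analytic in the annulus $\cD_{(1/\rho,\rho)}$, it has a Laurent expansion $h(z)=\sum_{k\in\bZ}c_kz^k$ that converges absolutely and uniformly on compact subsets, with $c_k=\frac{1}{2\pi i}\int_{|z|=s}h(z)z^{-k-1}\,dz$ for every $s\in(1/\rho,\rho)$; taking $s=1$ and $k=0$ shows that the exact value is $I(h)=c_0$. If the right-hand side of \eq{errtrapgen} is infinite there is nothing to prove, so I may assume that both boundary integrals defining $\|h\|_{\cD_{(1/\rho,\rho)}}$ are finite, which (by the Hardy-space theory for annuli) legitimizes using the boundary circles $|z|=\rho$ and $|z|=1/\rho$ in the coefficient formula.

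First I would record the discrete orthogonality of the $N$-th roots of unity: $T_N(z^k)=(1/N)\sum_{j=0}^{N-1}\zeta_N^{jk}$ equals $1$ when $N\mid k$ and $0$ otherwise. Because $\sum_{k\in\bZ}|c_k|<\infty$ (the radii $1/\rho<1<\rho$ lie strictly inside the annulus of convergence), I can apply $T_N$ term by term to the Laurent series, which gives $T_N(h)=\sum_{k\in\bZ}c_k\,T_N(z^k)=\sum_{m\in\bZ}c_{mN}$. Subtracting $I(h)=c_0$ produces the exact error identity
\[
T_N(h)-I(h)=\sum_{m\ne0}c_{mN}=\sum_{m\ge1}\bigl(c_{mN}+c_{-mN}\bigr).
\]

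Next I would bound the Laurent coefficients through the two boundary circles, keeping the estimate in its $L^1$ (Hardy) form rather than using the cruder sup-norm Cauchy bound. Writing $c_k=\frac{1}{2\pi i}\int_{|z|=\rho}h(z)z^{-k-1}\,dz$ for $k>0$ gives $|c_k|\le\rho^{-k}\,\|h\|_{|z|=\rho}$, where $\|h\|_{|z|=\rho}:=\frac{1}{2\pi i}\int_{|z|=\rho}|h(z)|\,\frac{dz}{z}$ is the second summand of $\|h\|_{\cD_{(1/\rho,\rho)}}$; symmetrically, using the circle $|z|=1/\rho$ for $k<0$ gives $|c_k|\le\rho^{k}\,\|h\|_{|z|=1/\rho}$, with $\|h\|_{|z|=1/\rho}$ the first summand. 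Substituting into the error identity and summing the geometric series $\sum_{m\ge1}\rho^{-mN}=\rho^{-N}/(1-\rho^{-N})$ yields
\[
|T_N(h)-I(h)|\le\frac{\rho^{-N}}{1-\rho^{-N}}\bigl(\|h\|_{|z|=\rho}+\|h\|_{|z|=1/\rho}\bigr)=\frac{\rho^{-N}}{1-\rho^{-N}}\,\|h\|_{\cD_{(1/\rho,\rho)}},
\]
which is \eq{errtrapgen}.

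There is no real obstacle; the computation is standard (cf. \cite{TrefethenWeidmanTrapezoid14}). The only points needing care are (i) justifying the term-by-term action of $T_N$ on the Laurent series and the displacement of the coefficient contours out to $|z|=\rho$ and $|z|=1/\rho$ — this is where analyticity in the \emph{open} annulus together with finiteness of the Hardy norm enters, and it can be handled either by quoting the Hardy-space facts for annuli or by first proving \eq{errtrapgen} on a slightly smaller annulus $\cD_{(1/\rho',\rho')}$ and then letting $\rho'\uparrow\rho$; and (ii) matching each half of the Hardy norm to the positive- versus negative-index coefficients, so that the constant comes out as exactly $1/(1-\rho^{-N})$ rather than $2/(1-\rho^{-N})$.
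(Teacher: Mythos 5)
Your proof is correct: the paper does not prove Theorem \ref{disctraperror} at all but simply cites \cite{TrefethenWeidmanTrapezoid14}, and your Laurent-expansion/aliasing argument (exact error identity $T_N(h)-I(h)=\sum_{m\neq 0}c_{mN}$ via discrete orthogonality of the roots of unity, coefficient bounds through the two boundary circles, geometric series) is precisely the standard proof given there, with the Hardy-norm constant matched correctly and the passage to the boundary circles handled legitimately by exhausting with $\rho'\uparrow\rho$. Nothing further is needed.
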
 To satisfy a small error tolerance $\eps>0$, it is necessary to choose $N=N(\eps,n)$ and $\rho$ so that $\rho^{-N}$ is small, hence,
we may use an approximate bound
\bbe\label{errtrapgen_app}
|T_N(h)-I(h)|\le \rho^{-N}\|h\|_{\cD_{(1/\rho,\rho)}}.
\ee
The reader observes that to satisfy a small error tolerance $\eps$ with a moderate $N$, it is necessary to
choose $r$ so that $\rho\in (1,1/r)$ can be chosen not small. But if $1/\rho$ is small and 
  $n$ is large, then the Hardy norm $\|h\|_{\cD_{(1/\rho,\rho)}}$ is very large, hence, one is forced to use
  a large $N$ to satisfy even a moderate error tolerance $\eps$. 
  The bound \eq{Neps_n_M} below is derived under a realistic assumption that one can  evaluate the terms in
  the trapezoid rule sufficiently accurately only if the terms are not too large. We impose the condition
  on the admissible size of the terms in the form $r^{-n}\le e^M$, and derive an approximation $N_{appr}=N_{appr}(\eps,n,M)$ to $N=N(\eps,n,M)$ in terms of $E=\ln(1/\eps), n$ and $M$. \footnote{We write $N(\eps,n,M)\approx N_{appr}(\eps,n,M)$ if there exist $c,C>0$ independent of $(\eps,n,M)$
  such that $cN_{appr}(\eps,n,M)\le N(\eps,n,M)\le CN_{appr}(\eps,n,M)$.}
  It is seen from \eq{Neps_n_M} that  $N$ decreases as $M$ increases. 
    \begin{lem}\label{lem:lower_bound_comp_trap}
  Let there exist $C_0>c_0>0$ such that 
  \bbe\label{eq:bound_main_trap}
  c_0|1-q|^{-1}\le |\tV(q)|\le C_0|1-q|^{-1}, \ q\in \cD(0,1),
  \ee
  and let $r^{-n}= e^M$, where $M$ is independent of $n$. 
  Then, if $n>>1$, $\eps<<1,$ and $E/n>>1$,
  \bbe\label{Neps_n_M} 
  N(=N(\eps,n,M))\approx \frac{n}{M}(E+2M).
  \ee
 
  \end{lem}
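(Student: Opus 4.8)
The plan is to combine the trapezoid error bound \eq{errtrapgen_app} with the two-sided estimate \eq{eq:bound_main_trap} on $\tV$, choosing the deformation radius $\rho$ optimally as a function of $n$ and $M$. Since $r^{-n}=e^M$ fixes $r=e^{-M/n}$, the prefactor in $h(q)=(qr)^{-n}\tV(qr)$ on the circle $|q|=\rho$ contributes $|qr|^{-n}=\rho^{-n}e^{M}$, and on $|q|=1/\rho$ it contributes $\rho^{n}e^{M}$. So the dominant contribution to the Hardy norm $\|h\|_{\cD_{(1/\rho,\rho)}}$ comes from the inner circle $|q|=1/\rho$, where $|h(q)|\asymp \rho^n e^M|1-q/\rho|^{-1}$; integrating the singularity $|1-q/\rho|^{-1}$ over the circle gives a factor that is only logarithmically large (of order $\ln\frac{1}{1-1/\rho}$ near $\rho\to 1$, but for $\rho$ bounded away from $1$ it is $O(1)$), so up to such mild factors $\|h\|_{\cD_{(1/\rho,\rho)}}\asymp \rho^n e^M$.

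Feeding this into \eq{errtrapgen_app}, the error is of order $\rho^{-N}\rho^n e^M$, and to meet tolerance $\eps=e^{-E}$ we need
\bbe\label{eq:Ncondition}
N\ln\rho \ge E + M + n\ln\rho,
\ee
i.e. $N\approx n + (E+M)/\ln\rho$. Now the constraint is that $\rho$ cannot be taken too large: we need $\rho r<1$ for $h$ to remain analytic on $|q|=\rho$, i.e. $\ln\rho < M/n$. Thus the best admissible choice is $\ln\rho$ of order $M/n$ (say $\ln\rho = c M/n$ for a fixed $c\in(0,1)$), which gives $N\approx n + (E+M)\cdot\frac{n}{cM} = \frac{n}{cM}(E+M+cM)$. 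Absorbing constants into the $\approx$ relation (which by the footnote allows fixed multiplicative constants) yields $N\approx \frac{n}{M}(E+M)$; writing $E+2M$ rather than $E+M$ is harmless since under the hypothesis $E/n\gg 1$ one also has $E\gg M$ is not assumed, but $E+M$ and $E+2M$ differ by at most a factor $2$, so both are valid representatives of the equivalence class. This establishes the upper bound $N(\eps,n,M)\le C N_{appr}$.

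For the matching lower bound, I would argue that no smaller $N$ can work: using the lower bound $|\tV(q)|\ge c_0|1-q|^{-1}$ in \eq{eq:bound_main_trap}, one shows that $I(h)=V_n$ together with the structure of $T_N(h)$ forces a genuine error of order $\rho^{-N}\|h\|$ from below — more precisely, one picks a test value of $\rho$ close to the maximal admissible one and shows the trapezoid sum $T_N(h)$ with fewer than $cN_{appr}$ nodes cannot resolve $V_n$ to accuracy $\eps$, because the omitted Fourier coefficients of $h$ on the inner circle are themselves of size $\gtrsim \rho^n e^M \rho^{-N} \gtrsim \eps$. The main obstacle is this lower bound: the upper bound is a routine optimization of a known error estimate, but to show $N$ must be at least of order $\frac{n}{M}(E+2M)$ one must quantify precisely how the pole of $\tV$ at $q=1$ (scaled to $q=1/r$, just outside the unit disc) obstructs the trapezoid rule, i.e. one needs a lower bound on the discretization error rather than just an upper bound. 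I expect one should invoke the exact formula for the aliasing error of the trapezoid rule in terms of the Laurent coefficients $V_n$ (which are $\asymp$ constant here, since $\tV(q)\asymp (1-q)^{-1}$ means $V_n\asymp 1$), reducing the lower bound to a clean statement: $T_N(h)-I(h)=\sum_{k\neq 0}(\text{Laurent coeff of }h\text{ at lag }kN)$, and this tail is bounded below by its first term, which is $\asymp r^{-n}\rho^{-N}$ times a constant — wait, more carefully, by $r^{\,kN-n}$-type terms whose leading piece is of order $e^{M}\rho^{-N}$ up to the $\rho$-dependence, giving exactly the claimed scaling after optimizing over the admissible range of $\rho$ implicit in the constraint $r^{-n}=e^M$.
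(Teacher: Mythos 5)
Your main computation is the paper's own proof in only slightly different clothing: the paper also starts from \eq{errtrapgen_app}, uses \eq{eq:bound_main_trap} to get the two-sided Hardy-norm estimate \eq{eq:bound_H_trap_0}, writes $\rho=e^{M_1/n}$ with $M_1<M$ close to $M$ (your $\ln\rho=cM/n$ with the constraint $\rho r<1$ is the same move), and reads $N$ off \eq{eq:Nasymp_trap}; the logarithmic factors $-\ln(1-r_\pm)\approx\ln(n/M)$ that you wave away enter only inside a logarithm and are killed by the standing assumptions $E\gg\ln n$, $E\gg\ln M$, exactly as in the paper. (Two small slips that do not affect the outcome: the singular factor on the inner circle is $|1-qr|^{-1}$ with $|qr|=r/\rho$, not $|1-q/\rho|^{-1}$, and $\rho$ is never ``bounded away from $1$'' in the relevant regime, so the factor is genuinely of order $\ln(n/M)$, not $O(1)$.)

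Where you diverge is the second half. The paper does not prove the operational lower bound you set out to prove: in the paper $N(\eps,n,M)$ is the $N$ prescribed by the error bound of Theorem \ref{disctraperror}, and the two-sided condition \eq{eq:bound_main_trap} is used only to pin down the Hardy norm, hence that prescription, up to constants; so your first half already constitutes the proof in the paper's sense. As written, your lower-bound sketch also has real gaps: $V_m\asymp 1$ does not follow from \eq{eq:bound_main_trap} (the bound constrains $|\tV|$, not individual coefficients); the aliasing tail $\sum_{k\neq 0}c_{kN}$ is not bounded below by its first term without sign or monotonicity information; and since the claimed $N$ exceeds $n$, the negative-lag terms are absent, so in the model case $\tV(q)=(1-q)^{-1}$ the true error is $\asymp e^{-MN/n}$, which forces only $N\gtrsim nE/M$ — this matches $\frac{n}{M}(E+2M)$ up to constants only when $E\gtrsim M$, so the aliasing route would not reproduce the stated formula in full generality anyway. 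If you drop that excursion (or state explicitly that $N$ means the bound-based prescription), your argument is complete and coincides with the paper's.
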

  
  \begin{proof} 
 If $n\to\infty$, we must have $r_-:=r/\rho\to 1$ and $r_+:=r\rho\to 1$. Therefore, it follows from \eq{eq:bound_main_trap}
that
 there exist $C_1,c_1>0$ independent of $r_\pm$ such that 
  \bbe\label{eq:bound_H_trap_0}
c_1(r_-^{-n}H(r_-)+r_+^{-n}H(r_+))\le \|h\|_{\cD_{(1/\rho,\rho)}}\le C_1(r_-^{-n}H(r_-)+r_+^{-n}H(r_+)),
\ee
where 
\bbe\label{eqpm:H_trap}
H(r_\pm) :=\int_{-\pi}^\pi\left|1-r_\pm e^{i\varphi}\right|^{-1}  d\varphi\sim -2\ln(1-r_\pm)
\ee
(see Section \ref{Proof of eq:bound_H_trap} for the proof). 
It follows from \eq{eq:bound_H_trap_0}, \eq{eqpm:H_trap}
and \eq{errtrapgen_app} that if $\rho^{-N}<<1$, then
\bbe\label{eq:Nasymp_trap}
N\sim\frac{E+\ln[2r_-^{-n}(-\ln(1-r_-))+2r_+^{-n}(-\ln(1-r_+))]}{\ln\rho}.
\ee
We represent $\rho$ in the form $\rho=e^{M_1/n}$. Since $r\rho<1$, $M_1<M$. We have $r_-^{-n}=e^{M+M_1}$ and $r_+^{-n} =e^{M-M_1}$, 
therefore, as
  $n\to\infty$, 
$-\ln(1-r_-)=(M+M_1)/n+O(M^2/n^2), -\ln(1-r_+)=(M-M_1)/n+O(M^2/n^2)$. We take $M_1$ very close to $M$ but not very close so that  the second term in the square brackets in the numerator on the RHS of
\eq{eq:Nasymp_trap} does not exceed the first one. We also assume that $n>> E>> \ln n$ and $E>>\ln M$. Then the approximation \eq{Neps_n_M} holds.
  
  \end{proof}

\subsection{Sinh-acceleration}\label{ss:sinh} 
We justify the sinh-deformation under the following condition. 
 \vskip0.1cm
 \noindent
 {\sc Condition $Z$-SINH$(\ga)$.}  
 There exists $\ga\in (0,\pi)$  such that
 \begin{enumerate}[(a)]
 \item $\tV(q)$ admits analytic continuation
 to $(-\cC_{\pi-\ga})\cup \cD(0, 1)$;
 \item
 for any $r\in (0,1)$ and $\ga_1\in (\ga,\pi)$, there exist $C_\tV=C_\tV(r,\ga_1)$ and $a_\tV=a_\tV(r,\ga)$ such that
 \bbe\label{eq:main_bound}
|\tV(q)|\le C_\tV(1+|q|)^{a_\tV}, \
q\in (-\cC_{\pi-\ga})\cup \cD(0, r).
\ee
\end{enumerate} 
\begin{rem}\label{rem:CtV,atV}{\rm 
In application that we consider, $\ga< \pi/2$, and $a_\tV$ is not large and independent of $r,\ga$.
  In the case of pricing options with discrete monitoring in wide classes of L\'evy models,
$Z$-SINH($\ga$) holds with an arbitrary small positive $\ga$.

}
\end{rem}
For $\mum<\mup$, set $S_{(\mum,\mup)}=\{z\in\bC\ |\ \Im z\in (\mum,\mup)\}$. 
\begin{lem}\label{lem:Z-SINH}
Let Condition $Z$-SINH($\ga$) hold with $\ga\in (0,\pi/2)$. 

Then, for any $r_+\in (0,1)$, $\om_\ell\in (\ga/2, \pi/4)$ and
$d_\ell>0$ such that $\om_\ell-d_\ell>\ga/2$ and $\om_\ell+d_\ell<\pi/4$, there exists $r_{0,-}\in (0,r_+)$
such that for any $r_-\in (r_{0,-},r_+)$,
\begin{enumerate}[(a)]
\item  there exist unique $\sg_\ell\in \bR$ and $b_\ell>0$ such that
$\sg_\ell-b_\ell\sin(\om_\ell-d_\ell)=r_+$,  $\sg_\ell-b_\ell\sin(\om_\ell+d_\ell)=r_-$, and
$r:=\sg_\ell-b_\ell\sin(\om_\ell)\in (r_-,r_+)$;
\item
the curves $\cL_{L; \sg_\ell,b_\ell,\om_\ell}$ and $\cL_{L; \sg_\ell,b_\ell,\om_\ell\pm d_\ell}$ pass through points $r$ and $r_\pm$,
respectively;
\item
$\chi_{L; \sg_\ell,b_\ell,\om_\ell}(S_{(-d_\ell,d_\ell)})\subset (-\cC_{\pi-\ga})\cup \cD(0, 1)$;
\item
$\cL_{L; \sg_\ell,b_\ell,\om_\ell\pm d_\ell}$ are the  boundaries of $\chi_{L;\sg_\ell,b_\ell,\om_\ell}(S_{(-d_\ell,d_\ell)})$;
\item
the distance of the left boundary $\cL_{L; \sg_\ell,b_\ell,\om_\ell+ d_\ell}$ from the origin is $r_-$;
\item if $n>a_\tV$, the deformation of the contour $\{|q|=r\}$ into $\cL_{L; \sg_\ell,b_\ell,\om_\ell}$ 
is justified.
\end{enumerate}
\end{lem}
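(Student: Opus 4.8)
\emph{Overall strategy.} Parts (a)--(b) are elementary trigonometry, (d)--(e) are soft consequences of the conformality of the sinh-map on a strip, and (f) follows from Cauchy's theorem once (c) is available; the real content of the lemma is (c), a quantitative statement on where the sinh-curves through a point of $(0,1)$ lie. I will use repeatedly that, by \eq{eq:sinhLapl}, $\chi_{L;\sg_\ell,b_\ell,\om}(y)=\bigl(\sg_\ell-b_\ell\sin\om\,\cosh y\bigr)+i\,b_\ell\cos\om\,\sinh y$, so for $\om\in(0,\pi/2)$ the curve $\cL_{L;\sg_\ell,b_\ell,\om}$ is symmetric about $\bR$, meets $\bR$ only at its vertex $\sg_\ell-b_\ell\sin\om$, opens to the left ($\Re\to-\infty$), and has $\arg\to\pm(\pi/2+\om)$ as $y\to\pm\infty$.

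\emph{The routine parts (a), (b), (d), (e).} For (a), the normalisations $\sg_\ell-b_\ell\sin(\om_\ell-d_\ell)=r_+$ and $\sg_\ell-b_\ell\sin(\om_\ell+d_\ell)=r_-$ form a $2\times2$ linear system in $(\sg_\ell,b_\ell)$; subtracting gives $b_\ell=(r_+-r_-)/\bigl(2\cos\om_\ell\sin d_\ell\bigr)>0$ and $\sg_\ell=r_++b_\ell\sin(\om_\ell-d_\ell)$, so the pair exists and is unique, and since $\sin$ is strictly increasing on $(0,\pi/2)$ and $\om_\ell-d_\ell<\om_\ell<\om_\ell+d_\ell$ all lie there, the vertex $r:=\sg_\ell-b_\ell\sin\om_\ell$ satisfies $r_-<r<r_+$. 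Part (b) is the evaluation at $y=0$. For (d), from $\chi_{L;\sg_\ell,b_\ell,\om_\ell}(y+is)=\chi_{L;\sg_\ell,b_\ell,\om_\ell+s}(y)$ the boundary lines $\{\Im=\pm d_\ell\}$ of $S_{(-d_\ell,d_\ell)}$ map onto $\cL_{L;\sg_\ell,b_\ell,\om_\ell\pm d_\ell}$; since $\om_\ell+d_\ell<\pi/4<\pi/2$ the values $\om_\ell+s$ stay in $(-\pi/2,\pi/2)$, so $\sinh$, hence $\chi_{L;\sg_\ell,b_\ell,\om_\ell}$, is injective on the strip, while $|\chi_{L;\sg_\ell,b_\ell,\om_\ell}|\to\infty$ along it; thus $\chi_{L;\sg_\ell,b_\ell,\om_\ell}$ is a homeomorphism of $\overline{S_{(-d_\ell,d_\ell)}}$ onto its image, whose topological boundary is the image of the two boundary lines, and distinct leaves $\cL_{L;\sg_\ell,b_\ell,\om}$ ($|\om-\om_\ell|\le d_\ell$) are disjoint (equal $\sinh$-values would force equal imaginary parts of the argument). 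Part (e) comes from the sign of $\frac{d}{dy}\bigl|\chi_{L;\sg_\ell,b_\ell,\om_\ell+d_\ell}(y)\bigr|^2=2b_\ell\sinh y\,\bigl(b_\ell\cosh y-\sg_\ell\sin(\om_\ell+d_\ell)\bigr)$, which locates the point of the left boundary nearest the origin.

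\emph{Part (c), the crux.} The set $(-\cC_{\pi-\ga})\cup\cD(0,1)$ is the complement in $\bC$ of the closed set $\{q:\,|\arg q|\le\ga,\ |q|\ge1\}$; by the symmetry $\chi_{L;\sg_\ell,b_\ell,\om}(\bar y)=\overline{\chi_{L;\sg_\ell,b_\ell,\om}(y)}$ it suffices to show that no $\chi_{L;\sg_\ell,b_\ell,\om}(y)$ with $y\ge0$, $|\chi|\ge1$ and $\om\in[\om_\ell-d_\ell,\om_\ell+d_\ell]$ has argument in $[0,\ga]$. Each such leaf has its vertex $\sg_\ell-b_\ell\sin\om\in[r_-,r_+]\subset(0,1)$ in the open disc, so it first reaches $\{|q|=1\}$ at some $y_1>0$, and one must bound $\arg\chi_{L;\sg_\ell,b_\ell,\om}(y_1)$ away from $0$. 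The mechanism: $r_+,\om_\ell,d_\ell$ are fixed, and as $r_-\uparrow r_+$ one has $b_\ell\downarrow0$, $\sg_\ell\to r_+$, while on $\{|\chi|\ge1\}$ the quantity $t:=b_\ell\cosh y$ stays bounded below; in the variable $t$ one gets $\chi_{L;\sg_\ell,b_\ell,\om}(y)\to r_++t\,e^{i(\pi/2+\om)}$ uniformly on $\{|\chi|\ge1\}$ and uniformly in $\om\in[\om_\ell-d_\ell,\om_\ell+d_\ell]$. The ray from the point $r_+\in(0,1)$ in direction $e^{i(\pi/2+\om)}$ first meets $\{|q|=1\}$ at argument $\ge2\om$ (the extremal case being $r_+\uparrow1$, where the leaf degenerates into a chord of the unit circle issuing from $1$, which re-enters at argument $2\om$), and thereafter its argument increases monotonically to $\pi/2+\om\in(\pi/2,3\pi/4)$. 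Since $2\om\ge2(\om_\ell-d_\ell)>\ga$, it follows that for $r_-$ close enough to $r_+$ --- i.e. for a suitable $r_{0,-}=r_{0,-}(r_+,\om_\ell,d_\ell,\ga)$ --- every leaf meets $\{|q|\ge1\}$ only at arguments in $(\ga,2\pi-\ga)$ and keeps its vertex in $\cD(0,1)$, which is (c). \emph{This perturbation estimate is the main obstacle:} the limit $b_\ell\to0$ is singular (the leaves escape to infinity), so each leaf has to be split into a bounded cap near the vertex, where $|\chi|<1$ for $b_\ell$ small by continuity from the degenerate picture, and a tail $\{b_\ell\cosh y\ge c\}$ on which the ray approximation is controlled; it is precisely in pinning down $r_{0,-}$ that both hypotheses $\om_\ell-d_\ell>\ga/2$ (through the factor $2$) and $\om_\ell+d_\ell<\pi/4$ (keeping the leaves' asymptotic directions strictly between argument $\pi/2$ and $\pi$, and $\chi$ injective) are genuinely used.

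\emph{Part (f).} Fix $n>a_\tV$. For large $\Lambda$, apply Cauchy's theorem to $q\mapsto q^{-n-1}\tV(q)$ on the annular region between the small circle $\{|q|=r\}$ and the loop formed by the truncated leaf $\cL_{L;\sg_\ell,b_\ell,\om_\ell}\cap\{|q|\le\Lambda\}$ together with the connecting arc of $\{|q|=\Lambda\}$ through $\arg q=\pi$: by (c) and elementary geometry this region is contained in $\bigl((-\cC_{\pi-\ga})\cup\cD(0,1)\bigr)\setminus\{0\}$ --- its part with $|q|<1$ lies in $\cD(0,1)\setminus\{0\}$ (the region between the two contours does not contain the origin, since each winds around it once), and its part with $|q|\ge1$ is squeezed, by monotonicity of the radial-angle function along the leaf, between $\cL_{L;\sg_\ell,b_\ell,\om_\ell}$ --- which by (c) lies in $-\cC_{\pi-\ga}$ there --- and the closed left half-plane, also in $-\cC_{\pi-\ga}$ because $\ga<\pi/2$. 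By \eq{eq:main_bound}, $|q^{-n-1}\tV(q)|\le C_\tV|q|^{a_\tV-n-1}$, so the arc contribution is $O(\Lambda^{\,a_\tV-n})\to0$ as $\Lambda\to\infty$. Passing to the limit identifies the integral \eq{izeT0} over $\{|q|=r\}$ with the integral over $\cL_{L;\sg_\ell,b_\ell,\om_\ell}$, and the substitution $q=\chi_{L;\sg_\ell,b_\ell,\om_\ell}(y)$ turns the latter into \eq{izeT0sinh}; this proves (f).
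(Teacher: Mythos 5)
Most of your proposal is sound and runs parallel to the paper's own proof: (a)--(b) via the explicit $2\times2$ linear system is exactly \eq{sgll_sinh_L}--\eq{bell_sinh_L}; your (d) (injectivity of $\sinh$ on the shifted substrip, boundary of the image $=$ image of the two boundary lines) and your (f) (Cauchy's theorem on the region between $\{|q|=r\}$ and the truncated sinh-contour closed by an arc, with arc contribution $O(\Lambda^{a_\tV-n})$ using \eq{eq:main_bound} and $n>a_\tV$) are the paper's terse arguments written out in detail; and your (c) is in fact more explicit than the paper's: the chord computation showing that the limiting ray from $r_+$ in direction $e^{i(\pi/2+\om)}$ re-enters the unit circle at argument $\ge 2\om>2(\om_\ell-d_\ell)>\ga$ is precisely the mechanism hidden in the paper's remark that, for $r_+-r_-$ small, the curve $\cL_{L;\sg_\ell,b_\ell,\om_\ell-d_\ell}$ passes to the left of $-e^{\pm i(\pi-\ga)}$. (A minor slip: the reflection symmetry is $\overline{\chi_{L;\sg_\ell,b_\ell,\om}(y)}=\chi_{L;\sg_\ell,b_\ell,\om}(-y)$ for real $y$, not $\chi(\bar y)=\overline{\chi(y)}$.)

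The genuine gap is (e). You write the correct identity $\frac{d}{dy}\bigl|\chi_{L;\sg_\ell,b_\ell,\om_\ell+d_\ell}(y)\bigr|^2=2b_\ell\sinh y\,\bigl(b_\ell\cosh y-\sg_\ell\sin(\om_\ell+d_\ell)\bigr)$ and then simply assert that it ``locates the point of the left boundary nearest the origin'', but you never carry out the sign analysis, and if you do it does not deliver the claim in the regime the lemma covers. The vertex $y=0$ minimizes $|\chi|$ along the curve only when $b_\ell\ge\sg_\ell\sin(\om_\ell+d_\ell)$; when $b_\ell<\sg_\ell\sin(\om_\ell+d_\ell)$ the vertex is a local maximum of $|\chi|$, and the minimum, attained at $\cosh y=\sg_\ell\sin(\om_\ell+d_\ell)/b_\ell$, equals $\cos(\om_\ell+d_\ell)\sqrt{\sg_\ell^2-b_\ell^2}$, which is strictly smaller than $r_-=\sg_\ell-b_\ell\sin(\om_\ell+d_\ell)$, since $r_-^2-\cos^2(\om_\ell+d_\ell)(\sg_\ell^2-b_\ell^2)=(\sg_\ell\sin(\om_\ell+d_\ell)-b_\ell)^2$. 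Because $b_\ell=(r_+-r_-)/(2\cos\om_\ell\sin d_\ell)\to0$ while $\sg_\ell\to r_+>0$ as $r_-\uparrow r_+$, the second case is exactly the one the lemma is about (``for any $r_-\in(r_{0,-},r_+)$''): the left wing of the boundary dips down to modulus roughly $r_+\cos(\om_\ell+d_\ell)$, not $r_-$. So your route cannot close (e) as stated; pursued honestly, it contradicts it. Note that the paper's own proof of (e) takes a different, equally brief route (convexity of the region to the left of the curve plus $\pi/2+\om_\ell+d_\ell<3\pi/4$), which also does not exclude the minimizer lying on a wing rather than at the vertex. At a minimum you should have either proved (e) under the additional restriction $b_\ell\ge\sg_\ell\sin(\om_\ell+d_\ell)$ (which is incompatible with $r_-$ close to $r_+$), or recorded the correct value $\min\bigl\{r_-,\;\cos(\om_\ell+d_\ell)\sqrt{\sg_\ell^2-b_\ell^2}\bigr\}$ of the distance and flagged the discrepancy — which matters downstream, since the Hardy-norm heuristics of Section \ref{param_choice_Z_SINH} rest on $|q|\ge r_-$ along the left boundary — rather than waving the step through.
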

\begin{proof} First, note that for any $0<r_-<r_+<1$, (a)-(b) are satisfied with \beqa\nonumber
\sg_\ell&=&\frac{r_+\sin(\om_\ell+d_\ell)-r_-\sin(\om_\ell-d_\ell)}{\sin(\om_\ell+d_\ell)-\sin(\om_\ell-d_\ell)}
\\\label{sgll_sinh_L}&=&\frac{(r_+-r_-)\sin(\om_\ell)\cos(d_\ell)+(r_++r_-)\cos(\om_\ell)\sin(d_\ell)}{2\cos(\om_\ell)\sin(d_\ell)},
\\\label{bell_sinh_L}
b_\ell&=&\frac{r_+-r_-}{\sin(\om_\ell+d_\ell)-\sin(\om_\ell-d_\ell)}=\frac{r_+-r_-}{2\cos(\om_\ell)\sin(d_\ell)}.
\eqa 
(c)-(d)
As $r_-\to r_+$, $\sg_\ell\to r_+$ and $b_\ell\to 0$. If $r_+-r_-\to 0$ and $q=q(y)\in\cL_{L; \sg_\ell,b_\ell,\om_\ell-d_\ell}$ 
is close to $-e^{i(\pi-\ga)}$ in absolute value, then $|y|$ tends to infinity. It follows that 
$
q(y)\sim r_++(b_\ell/2)\exp[i(\om_\ell-d_\ell)+y]$. Hence, if $r_+-r_-$ is sufficiently small, the curve $\cL_{L; \sg_\ell,b_\ell,\om_\ell-d_\ell}$ is to the left of
$-e^{\pm i(\pi-\ga)}$, which implies that $\cL_{L; \sg_\ell,b_\ell,\om_\ell+d_\ell}\subset (-\cC_{\pi-\ga})\cup \cD(0,r_+)$. It remains to note that the curve $\cL_{L; \sg_\ell,b_\ell,\om_\ell+d_\ell}$ is to the left of 
the curve $\cL_{L; \sg_\ell,b_\ell,\om_\ell-d_\ell}$, the former (the latter) curve being the left (right) boundary of 
the image of $S_{(-d_\ell,d_\ell)}$ under $\chi_{L; \sg_\ell,b_\ell,\om_\ell}$.

(e) The curve $\cL_{L; \sg_\ell,b_\ell,\om_\ell+d_\ell}$ is smooth, symmetric w.r.t. the real axis, passes through $r_->0$ and  
the rays $e^{\pm i(\pi/2+\om_\ell+d_\ell)}$ are asymptotes of $\cL_{L; \sg_\ell,b_\ell,\om_\ell+d_\ell}$. It remains to note that the region to the left from $\cL_{L; \sg_\ell,b_\ell,\om_\ell+d_\ell}$
is convex and  $\pi/2+\om_\ell+d_\ell<3\pi/4$.

(f) 
In the process of deformation, the contour of integration remains in the domain of analyticity, stabilizes to rays at infinity, and \eq{eq:main_bound}
holds. 

\end{proof}

\begin{rem}\label{rem:om_ell_de_ell}{\rm
\begin{enumerate}[(a)]
\item
If $\ga\in(0,\pi/2)$ is not very small, and $r_+$ is very close to 1, it is approximately optimal to choose $\om_\ell=\ga/4+\pi/8$ and $d_\ell<d_0:=\pi/8-\ga/4$ close to $d_0$, e.g.,
$d_\ell=k_d d_0$, where $k_d\in [0.8,0.9]$. 
\item
If $\ga> 0$ is very small and $n$ is not very large so that we can choose $r_+<\cos(\ga)$,
then it is possible and may be advantageous to choose $\om_\ell\in (\ga/2-\pi/2,0)$. See \cite{EfficientDiscExtremum} for examples.
An approximately optimal choice is $\om_\ell=\ga/2-\pi/8$, 
$d_\ell=3\pi/8-\ga/2$.
\end{enumerate}

}\end{rem}
 After the deformation, we make the corresponding change of variables \eq{izeT0sinh},
denote by $f_n(y)$  the integrand on the RHS of \eq{izeT0sinh}, and apply the infinite trapezoid rule. The result is \eq{Vn_inf_sinh}.
A bound for the discretization error of the infinite trapezoid rule, hence, recommendation for the choice of $\ze_\ell$ sufficient to satisfy a given error tolerance $\eps$,
is easy to derive because 
 $f_n$  is analytic in a strip $S_{(-d_\ell,d_\ell)}$, and  \eq{eq:main_bound} holds. 
We have $\lim_{R\to \pm\infty}\int_{-d_\ell}^{d_\ell} |f_n(i s+R)|ds=0,$
and 
\bbe\label{Hnorm}
H(f_n,d_\ell):=\|f_n\|_{H^1(S_{(-d_\ell,d_\ell)})}:=\lim_{s\downarrow -d_\ell}\int_\bR|f_n(i s+ t)|dt
+\lim_{s\uparrow d_\ell}\int_\bR|f_n(i s+t)|dt<\infty.
\ee
As in \cite{feng-linetsky08}, we call $H(f_n,d_\ell)$ the Hardy norm; the standard definition is marginally different.
The following key lemma for functions analytic in a strip is proved in \cite{stenger-book} using the heavy machinery of sinc-functions. A simple proof 
can be found in \cite{paraHeston}.
\begin{lem}[\cite{stenger-book}, Thm.3.2.1] 
Let $f_n$ satisfy the conditions above. Then
\bbe\label{Err_inf_trap}
\left|V_n- \ze_\ell \sum_{j\in \bZ}f_n(j\ze_\ell)\right|\le H(f_n,d_\ell)\frac{\exp[-2\pi d_\ell/\ze_\ell]}{1-\exp[-2\pi d_\ell/\ze_\ell]}.
\ee
\end{lem}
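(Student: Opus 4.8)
The plan is the following. First I would observe that, by \eq{izeT0sinh} (whose validity for $n>a_\tV$ is part of \lemm{lem:Z-SINH}), $V_n=\int_\bR f_n(y)\,dy$, so that the quantity to be bounded is exactly the discretization error of the infinite trapezoid rule \eq{Vn_inf_sinh} applied to $f_n$. The hypotheses collected before the statement --- $f_n$ analytic in the open strip $S_{(-d_\ell,d_\ell)}$, finite Hardy norm $H(f_n,d_\ell)<\infty$ as in \eq{Hnorm}, and the decay $\lim_{R\to\pm\infty}\int_{-d_\ell}^{d_\ell}|f_n(is+R)|\,ds=0$ at the two ends of the strip --- say precisely that $f_n$ belongs to the Hardy class $H^1$ of the strip. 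From this, bounding $|f_n(j\ze_\ell)|$ by the mean of $|f_n|$ over a fixed small disc around $j\ze_\ell$ (the discs disjoint and inside the strip, since $\ze_\ell,d_\ell>0$) and summing over $j\in\bZ$, one gets $\sum_{j\in\bZ}|f_n(j\ze_\ell)|<\infty$, so both sides of \eq{Err_inf_trap} make sense.

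The next step would be to rewrite the error via Poisson summation. Setting $\hf_n(\xi)=\int_\bR f_n(y)e^{-i\xi y}\,dy$, one has $\ze_\ell\sum_{j\in\bZ}f_n(j\ze_\ell)=\sum_{k\in\bZ}\hf_n(2\pi k/\ze_\ell)$; since the $k=0$ term is $\hf_n(0)=\int_\bR f_n(y)\,dy=V_n$, the left-hand side of \eq{Err_inf_trap} equals $\sum_{k\ne 0}\hf_n(2\pi k/\ze_\ell)$. I would then estimate $\hf_n(\xi)$ for $\xi>0$ by shifting the contour in the defining integral from $\bR$ down to the line $\Im y=-d_\ell$ (legitimate by analyticity in the strip together with the decay at its ends): since $|e^{-i\xi y}|=e^{-d_\ell\xi}$ on that line, $|\hf_n(\xi)|\le e^{-d_\ell\xi}\lim_{s\downarrow-d_\ell}\int_\bR|f_n(is+t)|\,dt$; symmetrically, shifting up to $\Im y=d_\ell$ gives $|\hf_n(\xi)|\le e^{-d_\ell|\xi|}\lim_{s\uparrow d_\ell}\int_\bR|f_n(is+t)|\,dt$ for $\xi<0$. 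Summing the two resulting geometric series over $k\ge1$ yields
\bbe\label{eq:geom_sum_sketch}
\left|\sum_{k\ne0}\hf_n(2\pi k/\ze_\ell)\right|\le H(f_n,d_\ell)\sum_{k\ge1}e^{-2\pi d_\ell k/\ze_\ell}=H(f_n,d_\ell)\,\frac{e^{-2\pi d_\ell/\ze_\ell}}{1-e^{-2\pi d_\ell/\ze_\ell}},
\ee
which is \eq{Err_inf_trap}. An equivalent route avoids Poisson summation: one expresses the error as the integral of $f_n$ against the kernels $(e^{\pm2\pi iz/\ze_\ell}-1)^{-1}$ over the boundary lines $\Im z=\pm d_\ell$ --- an Abel--Plana-type identity obtained from the residue theorem on a tall rectangle whose vertical sides contribute $o(1)$ by the end-of-strip decay, the term $\int_\bR f_n$ arising when the two kernels are reconciled on $\bR$ --- and then bounds each kernel on its line by $(e^{2\pi d_\ell/\ze_\ell}-1)^{-1}$.

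The main obstacle will be the rigorous justification of the two analytic manipulations on the hypotheses actually available: the Poisson summation identity for $f_n$ in $H^1$ of the strip (equivalently, convergence of the Fourier series of the $\ze_\ell$-periodization of $f_n$), and the contour shifts defining the bounds on $\hf_n(\xi)$, which need both the vanishing of the end-of-strip integrals and the fact that $\int_\bR|f_n(is+t)|\,dt$ converges to the value built into \eq{Hnorm} as $s\to\mp d_\ell$. These are standard facts about Hardy functions on a strip, but they are exactly the points where the hypotheses are consumed, and they are what make $H(f_n,d_\ell)$ the correct constant; granting them, \eq{eq:geom_sum_sketch} finishes the argument.
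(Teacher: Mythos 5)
Your argument is correct, but note that the paper does not supply a proof of this lemma at all: it is quoted from \cite{stenger-book} (Thm.~3.2.1), with a ``simple proof'' attributed to \cite{paraHeston} (see also \cite{TrefethenWeidmanTrapezoid14}). Your main route --- Poisson summation $\ze_\ell\sum_j f_n(j\ze_\ell)=\sum_k \hf_n(2\pi k/\ze_\ell)$, the contour shift to the lines $\Im y=\mp d_\ell$ giving $|\hf_n(\xi)|\le e^{-d_\ell|\xi|}$ times the corresponding boundary integral (the end-of-strip decay hypothesis and the limits in \eq{Hnorm} being exactly what legitimizes the shift), and the geometric series producing the factor $e^{-2\pi d_\ell/\ze_\ell}/(1-e^{-2\pi d_\ell/\ze_\ell})$ --- is a complete and standard derivation, and the splitting of positive and negative frequencies onto the two boundary lines is precisely what makes $H(f_n,d_\ell)$, rather than twice a one-sided norm, the correct constant. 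The alternative you sketch, representing the error as an integral of $f_n$ against the kernels $(e^{\pm 2\pi i z/\ze_\ell}-1)^{-1}$ over $\Im z=\pm d_\ell$ via the residue theorem on tall rectangles, is in substance the proof used in the cited references, so either route is acceptable; the Poisson-summation version buys a slightly more self-contained argument at the cost of having to verify absolute convergence of $\sum_j|f_n(j\ze_\ell)|$ and of the periodization, which your disc-averaging (subharmonicity of $|f_n|$) remark handles. The only points you defer --- uniform boundedness of the line integrals inside the strip and their convergence to the limits in \eq{Hnorm} --- are standard facts for the Hardy class $H^1$ of a strip and are exactly the hypotheses the lemma assumes, so no genuine gap remains.
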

 Once
an approximate bound $H_{\mathrm{appr.}}(f_n,d)$ for $H(f_n,d)$ is derived, it becomes possible to satisfy the desired error tolerance $\eps$
with a good accuracy letting
\bbe\label{rec_ze_ze}
\ze_\ell=2\pi d_\ell/\ln(H_{\mathrm{appr.}}(f_n,d)/\eps).
\ee
 We truncate the series on the RHS of \eq{Vn_inf_sinh}
\bbe\label{Vn_inf_sinh_trunc}
V_n\approx \ze_\ell \sum_{|j|\le N_0}f_n(j\ze_\ell).
\ee
Given the error tolerance  $\eps$,  
a  good approximation to the truncation parameter $\La:=N_0\ze_\ell$ is
\bbe\label{eqLa_z}
\La=\frac{1}{n-a_{\tV}}\ln\frac{C_{\tV}(r)}{\eps}-\ln\frac{b_\ell}{4\pi}+\La_0,
\ee
where $C_{\tV}$ and $a_{\tV}$ are from \eq{eq:main_bound}, and $\La_0$ is the length of the intersection
$\cL_{L; \sg_\ell,b_\ell,\om_\ell}\cup \cD(0,1)$. This follows from the asymptotics 
$f_n(y)=(1+o(1))(b_\ell/(4\pi))e^{-|y|}$ as $y\to\pm\infty$.

If $V_n$ are real, then $\overline{h(z)}=h(\bar z)$, and, therefore, we can simplify \eq{Vn_inf_sinh_trunc}
\bbe\label{Vn_inf_sinh_trunc_sym}
V_n\approx 2\ze_\ell \Re \sum_{j=0}^{N_0}f_n(j\ze_\ell)(1-\de_{j0}/2).
\ee
 If $a_\tV$ is not large (in the applications in the paper, $a_\tV<7$), the complexity of the numerical scheme (number of the terms in the simplified trapezoid rule) is of the order of 
 \[\left(\frac{\ln(1/\eps)}{n}-\ln\frac{b_\ell}{4\pi}+\La_0\right)\frac{\ln(H(f_n,d_\ell))+\ln(1/\eps)}{2\pi d_\ell}.\]

\subsection{Parameter choice}\label{param_choice_Z_SINH} We give a detailed prescription for the parameter choice under the following condition similar to \eq{eq:bound_main_trap}:
$\ga \in (0,\pi/2)$, and  there exist $C_{\tV,\ga}>0$ and $a_\tV$ such that
 \bbe\label{eq:main_bound_2}
|\tV(q)|\le C_{\tV,\ga}|1-q|^{-1}|q|^{a_\tV}, \
q\in (-\cC_{\pi-\ga})\cup \cD(0, 1).
\ee
Note that if $n$ is small or moderate, then the sinh-deformation brings small advantages or none. Hence, we assume that $n$ is large. In this case, the Hardy norm is very large unless the interval
$[r_-,r_+]:=\chi_{L; \sg_\ell,b_\ell,\om_\ell}(S_{(-d_\ell,d_\ell)})\cap\bR\subset(0,1)$ 
is very close to 1,
and $\chi_{L; \sg_\ell,b_\ell,\om_\ell}(S_{(-d_\ell,d_\ell)})$ is at the distance $r_-$ from the origin. \footnote{If the latter property
fails, the factor $q^{-n-1}$ can be much larger in absolute value than $r_-^{-n-1}$  for $q$ on the left boundary of 
$\chi_{L; \sg_\ell,b_\ell,\om_\ell}(S_{(-d_\ell,d_\ell)})$, hence, the Hardy norm very large. } Let $M$ and $M_1$ be as in the case of the trapezoid rule.


\vskip0.1cm
\noindent
{\sc Algorithm.}  
Let $n$ be large. Given $M$,
\begin{enumerate}[(1)]
\item
choose $M_1<M$ close to $M$, e.g., $M_1=0.9M$;
\item
set $r_-=e^{-(M+M_1)/n}$ and $r_+=e^{-(M-M_1)/n}$;
\item
choose $\om_\ell$ and $d_\ell$ as in Remark \ref{rem:om_ell_de_ell};
\item
define
 $(\sg_\ell,b_\ell)$ by \eq{sgll_sinh_L}, \eq{bell_sinh_L}. If necessary, increase $\om_\ell$ and decrease
 $d_\ell$ so that\\ $\sg_\ell-b_\ell\sin(\om_\ell-d_\ell)\le r_+$ and $\sg_\ell-b_\ell\sin(\om_\ell+d_\ell)\ge r_-$;
\item
define step $\ze$ and number of terms $N_0=\La/\ze$ by \eq{rec_ze_ze} and \eq{eqLa_z}.
\end{enumerate}
To apply \eq{rec_ze_ze}, we need an efficient approximation to the Hardy norm.
The Hardy norm being an integral one, we may derive an approximate bound working in the $q$-plane. On the strength of
\eq{eq:main_bound_2}, $H(f_n,d_\ell)$ admits an approximation 
\[
H(f_n,d_\ell)\approx \frac{1}{\pi} C_{\tV,\ga}(r_+^{-n-1}H(r_+)+r_-^{-n-1}H(r_-)), \]where 
\[
H(r_\pm)=
\int_0^{+\infty}\left|1-r_\pm-te^{i(\om_\ell+\pi/2\mp d_\ell)}\right|^{-1}(1+t)^{-n-1+a_{\tV}}dt.
\]
Similarly to \eq{eqpm:H_trap}, $H(r_\pm)\sim -2\ln(1-r_\pm)$ as $n\to \infty$.
If
$r_+$ is chosen not too close to 1,  then 
$H(f_n,d_\ell)\approx (2/\pi) C_{\tV,\ga}e^{M+M_1}n/(M+M_1)$,
$\ln H(f_n,d_\ell)\approx 2M+\ln n, $
and we can use
\bbe\label{ze_ell}
\ze_\ell=\frac{2\pi d_\ell}{E+\ln n+ 2M}=\frac{k_d\pi(\pi/4-\ga/2)}{E+\ln n+ 2M}.
\ee
Assuming that  $\ga\in (0,\pi/2)$ is close to $0$, we have 
\[
b_\ell=\frac{r_+-r_-}{\sin(\om_\ell+d_\ell)-\sin(\om_\ell-d_\ell)}\approx \frac{2M}{n(1/\sqrt{2}-\sin(\ga/2))},\] and  \eq{eqLa_z} gives $\La$ satisfying
\bbe\label{eqLa_z_1}
\La\le C_{\tV,\ga}\frac{E}{n-a_{\tV}}+\ln n-\ln M,
\ee
 where $C_{\tV,\ga}$ is independent of $\eps$ and $n$. The number of terms is, approximately,
 \bbe\label{N_ell_sinh}
 N_\ell\approx\frac{E+\ln n+ 2M}{k_d\pi^2/4}\left(C_{\tV,\ga}\frac{E}{n-a_{\tV}}+\ln (n/M)+\ga\right).
 \ee
 In applications in the paper, \eq{eq:main_bound_2} holds with $a_\tV\in [0,7)$. Assume that 1) $n$ is large but not extremely large so that
 $n>> E>> \ln n$, and 
 2) $E>>M$. Then we may use the approximation
  \bbe\label{N_ell_sinh_2}
N_\ell\approx\frac{(E+2M)\ln (n/M)}{k_d\pi^2/4}.
\ee

\subsection{Comparison of the complexity of the two schemes}
Comparing \eq{Neps_n_M} with \eq{N_ell_sinh_2}, we see that the complexity of the trapezoid rule
exceeds the complexity of the new numerical realization of the inverse $Z$-transform by the factor 
$K\approx (n/M)/\ln(n/M)$.
\begin{example}{\rm
Assume (rather unrealistically) that the individual terms in the trapezoid rule can be calculated with the required accuracy 
(e.g., $\eps=10^{-16}$) even if the size of the individual terms
in the trapezoid rule is of the order of $10^{10}$.
Then $M\approx 23$. In the case of daily monitoring and time horizon $T=5Y$, $n=1260$, $n/M\approx 55$, and $K\approx 14$.
If $T=15Y$, $K\approx 32$, and if $T=30$, $K\approx 57$. 
Under more realistic assumptions, $M$ is smaller and the gain coefficient $K$ larger.
}\end{example}

\begin{rem}\label{rem: K-gain}
{\rm
\begin{enumerate}
[(a)]
\item
In the case of pricing of exotic options,
evaluation of the individual terms is time consuming. 
Hence, typically, one can calculate the individual terms with the accuracy worse than E-15, and then the gain in
speed
of the new realization of the inverse $Z$-transform is greater still. See examples in  \cite{EfficientDiscExtremum}.
\item
 The ratio of the LHS and RHS  of \eq{Neps_n_M} is close to 1, whereas the ratio
$N_\ell/((E+2M)\ln(n/M))$ is close to $4/\pi^2$. Accurate asymptotic statements are too cumbersome to be useful
but numerical experiments support the claim. Hence, if $\ga$ is very close to 0, which is the case
in the majority of applications in the present paper, than the ``gain coefficient" $K$ increases two-fold. 
\end{enumerate}
}
\end{rem}


   \section{Evaluation of operators $P^n$}\label{s:oper_form}
   \subsection{Contour deformations: sufficient conditions}
 Let $P$ be a bounded operator in the Banach space $\cB$;  $\sg(P)$ and $\rho(P)$ denote the spectrum and spectral radius of $P$. 
 Let $n\ge 1$ be an integer and $r\in (0,1/\rho(P)))$.
 Applying the residue theorem, one obtains  
\bbe\label{VB}
P^n=\frac{1}{2\pi i}\int_{|q|=r}(1-qP)^{-1}q^{-n-1}dq.
\ee
\begin{thm}\label{thm: sep from 0}
 Let  $P$ be a bounded linear operator in the Banach space $\cB$, and 
let there exist $\ga\in (0,\pi/2)$ such that $\sg(P)\subset(\cC_{\ga})^c$.
 Then
 \begin{enumerate}[(a)]
 \item
for $q\in (-\cC_{\pi-\ga})\cup\{q\ |\ |q|<1/\rho(P)\}$, $I-qP$ is invertible;
\item
  for any $r\in (0,1/\rho(P))$ and $\ga_1\in (\ga,\pi/2)$, 
there exists $C>0$ such that \bbe\label{bound_P_gen}
\|(I-qP)^{-1}\|\le C(1+|q|), \
q\in (-\cC_{\pi-\ga_1})^c\cup\cD(0,r).
\ee

\end{enumerate}
\end{thm}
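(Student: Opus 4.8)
The plan is to transport everything to the resolvent $R(\la,P):=(\la I-P)^{-1}$ by the substitution $\la=1/q$, using the elementary identity $(I-qP)^{-1}=q^{-1}(\la I-P)^{-1}=\la R(\la,P)$, valid for $q\ne 0$ since $I-qP=q(\la I-P)$. Part (a) then becomes a statement about when $\la=1/q$ avoids $\sg(P)$, and part (b) a resolvent estimate of the form $\|\la R(\la,P)\|\le M$.

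For part (a), two geometric observations do the work: $-\cC_{\pi-\ga}=\bC\setminus\overline{\cC_\ga}$ (the negation of the wide sector of half-angle $\pi-\ga$ is the complement of the closed sector $\overline{\cC_\ga}$ of half-angle $\ga$ about the positive reals), and $\overline{\cC_\ga}\setminus\{0\}$ is invariant under $z\mapsto z^{-1}$, since inversion negates the argument. As $\sg(P)\subset\overline{\cC_\ga}$ and $|\mu|\le\rho(P)$ for $\mu\in\sg(P)$, it follows that for $q\in-\cC_{\pi-\ga}$ we have $q\notin\overline{\cC_\ga}$, hence $1/q\notin\overline{\cC_\ga}\supseteq\sg(P)$; and for $|q|<1/\rho(P)$ we have $|1/q|>\rho(P)$, hence again $1/q\notin\sg(P)$ (the case $q=0$ being trivial). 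In either case $\la=1/q$ lies in the resolvent set, so $I-qP=q(\la I-P)$ is invertible.

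For part (b) I would split the region into $\cD(0,r)$ and the sector. On $\cD(0,r)$ with $r<1/\rho(P)$ the Neumann series $(I-qP)^{-1}=\sum_{k\ge0}q^kP^k$ converges by the root test, because $\limsup_k(r^k\|P^k\|)^{1/k}=r\,\rho(P)<1$, and gives the uniform bound $\|(I-qP)^{-1}\|\le\sum_{k\ge0}r^k\|P^k\|=:C_1<\infty$. On the sector, $-\cC_{\pi-\ga_1}$ is again invariant under $q\mapsto 1/q$, so $\la=1/q$ ranges over $-\cC_{\pi-\ga_1}=\bC\setminus\overline{\cC_{\ga_1}}$ and $\|(I-qP)^{-1}\|=\|\la R(\la,P)\|$. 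Hence everything reduces to establishing $M:=\sup\{\|\la R(\la,P)\|:\la\in\bC\setminus\overline{\cC_{\ga_1}}\}<\infty$; granting this, $\|(I-qP)^{-1}\|\le\max(C_1,M)\le\max(C_1,M)(1+|q|)$ on $(-\cC_{\pi-\ga_1})\cup\cD(0,r)$, and the closed sector $(-\cC_{\pi-\ga_1})^c$ is absorbed by rerunning the sector estimate with an intermediate half-angle $\ga_2\in(\ga,\ga_1)$, using $(-\cC_{\pi-\ga_1})^c\subset(-\cC_{\pi-\ga_2})\cup\{0\}$.

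The substantive point, and the step I expect to be the main obstacle, is the uniform resolvent bound $M<\infty$. This is where the word ``sectorial'' is doing real work: for a general bounded operator the spectral inclusion $\sg(P)\subset\overline{\cC_\ga}$ does not imply it — a non-nilpotent quasi-nilpotent operator (such as the Volterra operator) has $\sg(P)=\{0\}$ while $\|\la R(\la,P)\|$ grows faster than any power of $1/|\la|$ as $\la\to0$ in suitable directions — so one reads the hypothesis as: $P$ is sectorial, i.e. additionally $\sup\{\|\la R(\la,P)\|:\la\notin\overline{\cC_{\ga_1}}\}<\infty$ for every admissible $\ga_1>\ga$. For $|\la|$ large the bound is automatic ($\la R(\la,P)\to I$); the content is uniformity as $\la\to0$ inside the sector. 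When $0\notin\sg(P)$ it follows from the holomorphic functional calculus: write $(I-qP)^{-1}=\frac{1}{2\pi i}\oint_\Gamma(1-qz)^{-1}R(z,P)\,dz$ with $\Gamma$ a fixed positively oriented Jordan curve enclosing $\sg(P)$ inside a slightly wider sector $\overline{\cC_{\ga_2}}$, $\ga<\ga_2<\ga_1$, and avoiding a neighbourhood of $0$ (so that $1/q\notin\overline{\cC_{\ga_2}}$ lies outside $\Gamma$); then for $q\in-\cC_{\pi-\ga_1}$ and $z\in\Gamma$ one has $\arg(qz)\in(\ga_1-\ga_2,\,2\pi-\ga_1+\ga_2)$, hence $|1-qz|\ge\sin(\ga_1-\ga_2)$ and $\|(I-qP)^{-1}\|\le\frac{\mathrm{length}(\Gamma)}{2\pi\,\sin(\ga_1-\ga_2)}\sup_{z\in\Gamma}\|R(z,P)\|$, a constant. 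The delicate case is $0\in\sg(P)$ — exactly the situation of the transition operators $P_t$ in the applications, where $e^{-t\psi(\xi)}\to0$ as $|\xi|\to\infty$ forces $0\in\sg(P_t)$ — and there no such $\Gamma$ exists, so one must control $R(\la,P)$ near $0$ using sectoriality itself. For the self-adjoint operators $P_t$ of symmetric Lévy models this is immediate from normality: $\|\la R(\la,P_t)\|=|\la|/\mathrm{dist}(\la,\sg(P_t))$, and since any $\la\notin\overline{\cC_{\ga_1}}$ is at angular distance at least $\ga_1-\ga$ from $\overline{\cC_\ga}\supseteq\sg(P_t)$, the law of cosines yields $\mathrm{dist}(\la,\sg(P_t))\ge|\la|\sin(\ga_1-\ga)$, whence $M\le 1/\sin(\ga_1-\ga)$.
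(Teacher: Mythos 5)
Your part (a) and your treatment of the disc $\cD(0,r)$ coincide with the paper's argument (inversion maps the closed sector onto itself minus the origin, Neumann series on the disc), and your reduction of (b) to the bound $\sup\|\la(\la I-P)^{-1}\|<\infty$ for $\la=1/q$ outside $\overline{\cC_{\ga_1}}$ is exactly the reduction the paper makes, since $(I-qP)^{-1}=q^{-1}(q^{-1}I-P)^{-1}$. Where you diverge is at the decisive step: the paper's proof of (b) simply asserts that, because the set of points $q^{-1}$ is disjoint from $\sg(P)$, the resolvent $\|(q^{-1}I-P)^{-1}\|$ is uniformly bounded there, whereas you refuse to make that inference and instead prove the bound only when $0\notin\sg(P)$ (via a Dunford contour) or when $P$ is normal (via the spectral theorem), flagging the general case as requiring sectoriality as an extra hypothesis.

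Your skepticism is warranted: the set of values $q^{-1}$ accumulates at $0$, and when $0\in\sg(P)$ (which is precisely the situation for the transition operators in the paper's applications) spectral disjointness alone gives no uniform resolvent bound. A $2\times 2$ nilpotent Jordan block already defeats the paper's intermediate claim $\|(q^{-1}I-P)^{-1}\|\le C$, and a $3\times 3$ nilpotent block $N$ (with $\sg(N)=\{0\}\subset(\cC_\ga)^c$) even violates the stated conclusion, since $\|(I-qN)^{-1}\|\sim|q|^2$ along the negative real axis; your Volterra example makes the same point in infinite dimensions for any $\ga_1<\pi/2$. So the ``gap'' in your proposal — that you do not establish the uniform bound for an arbitrary bounded $P$ with $\sg(P)\subset(\cC_\ga)^c$ — is not a defect of your argument but a defect of the statement and of the paper's one-line justification; some supplementary hypothesis (a sectorial resolvent estimate, invertibility of $P$, or normality) is genuinely needed. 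The cases you do prove are the ones the paper actually relies on downstream: the normal/self-adjoint case is Theorem \ref{thm:norm_inv_Z} (your law-of-cosines estimate $1/\sin(\ga_1-\ga)$ is literally \eq{normal_bound_ga_pr}), and the passage to the operators of Proposition \ref{prop: justification_Z_oper} uses that case together with Remark \ref{rem:modif_cor}, where the factor $(1+|q|)$ in \eq{bound_P_gen} originates. A cleaner write-up of your proposal would state explicitly the strengthened hypothesis under which (b) holds and record the finite-dimensional counterexample showing it cannot be dropped.
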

\begin{proof}
(a) If $|q|<1/\rho(P)$, the invertibility of $I-qP$ is a textbook fact. 
If  $q\in (-\cC_{\pi-\ga})$, then $q^{-1}\in (-\cC_{\pi-\ga})$ as well, and since $(-\cC_{\pi-\ga})\cap (\cC_\ga)^c=\emptyset$, $I-qP$ is invertible. (b) If $q\in (-\cC_{\pi-\ga_1})^c\cup\{q\ |\ |q|\le r\}$, then
$q^{-1}\in (-\cC_{\pi-\ga_1})^c\cup\{q\ |\ |q|\ge 1/r\}$. The intersection of the latter set with $\sg(P)$ is empty, hence,
there exists $C>0$ such that for all $q\in (-\cC_{\pi-\ga_1})^c\cup\{q\ |\ |q|\le r\}$, $\|(q^{-1}-P)^{-1}\|\le C$,
and (b) follows.
 
\end{proof}
Thus, the operator $\rho(P)^{-1}P$ satisfies (the operator form of) Condition $Z$-SINH($\ga$), and we can use
Lemma \ref{lem:Z-SINH} and Remark \ref{rem:om_ell_de_ell} to choose the parameters of the sinh-deformation of
the contour in \eq{VB}. See Algorithm in Section \ref{param_choice_Z_SINH}.

If $n$ is large, the Hardy norm can be very large unless the deformed contour is as far from the origin as possible.
To choose the parameters of the deformation so that the Hardy norm is not too large, we need conditions analogous to \eq{eq:main_bound_2}.
\begin{thm}\label{thm:norm_inv_Z}
Let $P$ be a non-invertible bounded normal operator  in the Hilbert space $\cH$, and let there exist $\ga\in (0,\pi/2)$
such that $\sg(P)\subset(\cC_\ga)^c$ (or $\ga=0$ and $P$ is non-negative self-adjoint). Then 
\begin{enumerate}[(a)]
\item
for $q\in (-\cC_{\pi-\ga})\cup \cD(0,1/\|P\|)$, $I-qP$ is invertible, and 
\bbe\label{norm_bound_main}
\|(I-qP)^{-1}\|=\max_{\la\in \sg(P)}|1-q\la|^{-1};
\ee
\item
for any $\ga'\in (\ga,\pi/2)$,
\bbe\label{normal_bound_ga_pr}
\|(I-qP)^{-1}\|\le 1/\sin(\ga'-\ga),\ q\in-\cC_{\pi-\ga'}; 
\ee
\item
for any $r\in (0,1)$, there exists $C(r)$ independent of $\ga$ such that if $\sin\ga\in (0,1-r)$, then
\bbe\label{normal_bound_disc_small}
\|(I-qP)^{-1}\|\le C(r)/|1-q\|P\||, \ q\in \cD(0,(1-\sin\ga)/\|P\|);
\ee
\item
if $P\ge 0$ is self-adjoint, then 
\bbe\label{self_adjoint_bound}
\|(I-qP)^{-1}\|\le 4/|1-q\|P\||, \ q\in\cD(0,1/\|P\|).
\ee
\end{enumerate}
\end{thm}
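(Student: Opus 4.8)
The plan is to exploit the spectral theorem for normal operators throughout, so that every operator-norm estimate reduces to a scalar estimate over the spectrum $\sg(P)\subset\bC$. First I would recall that for a bounded normal $P$ on a Hilbert space, $\|g(P)\|=\sup_{\la\in\sg(P)}|g(\la)|$ for any bounded Borel function $g$ on $\sg(P)$; applying this with $g(\la)=(1-q\la)^{-1}$ gives part (a), provided $1-q\la\neq 0$ for all $\la\in\sg(P)$. For the invertibility claim in (a): if $|q|<1/\|P\|=1/\rho(P)$ then $|q\la|<1$ on $\sg(P)$, so $1-q\la\neq0$; if $q\in-\cC_{\pi-\ga}$, then (as in the proof of Theorem~\ref{thm: sep from 0}) $q\la\in -\cC_{\pi-\ga}\cdot(\cC_\ga)^c$, and one checks this product set avoids the point $1$ — indeed it avoids the closed right half-plane minus the origin, since $\arg q\in(\pi-\ga,\pi+\ga)$ (mod $2\pi$) and $\arg\la\in[\ga,2\pi-\ga]$, so $\arg(q\la)$ stays in an arc around $\pi$ bounded away from $0$. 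The self-adjoint case $\ga=0$, $P\ge0$ is the limiting case $\sg(P)\subset[0,\infty)$, handled identically.

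For part (b), I would estimate $|1-q\la|$ from below for $q\in-\cC_{\pi-\ga'}$ and $\la\in(\cC_\ga)^c$. Writing $q=|q|e^{i\theta}$ with $|\pi-\theta|<\ga'$ and $\la=|\la|e^{i\phi}$ with $\phi\in[\ga,2\pi-\ga]$, the product $q\la$ has argument within $\ga'-\ga$ of the ray $\arg=\pi$ in the worst case (when $\phi$ is at the endpoint closest to making $q\la$ real positive). Hence $q\la$ lies in the half-plane $\{\Re(e^{-i(\pi-(\ga'-\ga))}z)\le 0\}$ or similar; more cleanly, the distance from $q\la$ to the point $1$ is at least the distance from the ray emanating near angle $\pi$ to $1$, and elementary trigonometry gives $|1-q\la|\ge\sin(\ga'-\ga)$ uniformly. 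Taking the supremum over $\la$ and invoking (a) yields \eqref{normal_bound_ga_pr}. For part (d), with $P\ge0$ self-adjoint and $\la\in[0,\|P\|]$, $q\in\cD(0,1/\|P\|)$, I would use the standard estimate $|1-q\la|\ge c|1-q\|P\||$ for the scalar function: write $t=\la/\|P\|\in[0,1]$ and $w=q\|P\|\in\cD(0,1)$, and show $|1-tw|\ge\frac14|1-w|$ for all $t\in[0,1]$, $|w|<1$ — this is a routine one-variable lemma (the minimum of $|1-tw|$ over $t\in[0,1]$ is either $|1-w|$, or $|\Im w|$ if $\Re w\in(0,|w|^2)$... one checks the constant $1/4$ suffices, in fact $|1-tw|\ge\frac12|1-w|$ already holds but $1/4$ is safe). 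Then sup over $t$ and (a) give \eqref{self_adjoint_bound}.

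For part (c), the situation is a perturbation of the self-adjoint case: $\sg(P)$ is not on the positive axis but is confined to the sector complement $(\cC_\ga)^c$ with $\ga$ small, and $q$ ranges over the smaller disc $\cD(0,(1-\sin\ga)/\|P\|)$. The point of the radius $(1-\sin\ga)/\|P\|$ is that it keeps $q\la$ away from $1$ even accounting for the angular spread $\ga$: if $|q|<(1-\sin\ga)/\|P\|$ and $|\la|\le\|P\|$ then $|q\la|<1-\sin\ga$, and combined with $|\arg\la|\le$ (distance to positive axis controlled by $\ga$ — more precisely $\la\in(\cC_\ga)^c$ means either $|\arg\la|\ge\ga$ or... wait, $(\cC_\ga)^c$ is the complement of the open sector, so $\arg\la\in\{0\}\cup[\ga,2\pi-\ga]$; the relevant nearby-to-positive-axis case is $\arg\la$ near $0$ forced to be $0$, or $\arg\la\in[\ga,\ldots]$). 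I would split: if $\la$ is actually positive real, reduce to (d) on the smaller disc; if $|\arg\la|\ge\ga$, then $|1-q\la|\ge$ something controlled by $\sin\ga$ and by $1-|q\la|$; assembling, one extracts a constant $C(r)$ depending only on $r$ (through $|q|\|P\|\le$ relation to $r$) and independent of $\ga$. The main obstacle is precisely this part (c): getting the constant genuinely \emph{independent of $\ga$} requires care in how the two regimes (angular separation from $1$ versus radial separation from $1$) are balanced as $\ga\to0$, and one must verify that the shrinkage of the disc to radius $(1-\sin\ga)/\|P\|$ exactly compensates the angular spread so that no $\ga$-dependent blow-up sneaks in. Everything else is the spectral theorem plus one-variable estimates.
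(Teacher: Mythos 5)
You have the right strategy in outline (spectral theorem reduces everything to plane geometry over $\sg(P)$, which is exactly what the paper does), but two things in your write-up are genuine problems, not just sloppiness. First, you have misread the notation, and the misreading is load-bearing. In this paper $(\cC_\ga)^c$ denotes the \emph{closure} of the sector $\cC_\ga=\{\rho e^{i\varphi}\,:\,\rho>0,\ |\varphi|<\ga\}$ (see the Introduction, and the proof of Theorem \ref{thm: sep from 0}, which uses $(-\cC_{\pi-\ga})\cap(\cC_\ga)^c=\emptyset$), so the hypothesis is $|\arg\la|\le\ga$ on $\sg(P)$: the spectrum hugs the positive half-axis. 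Likewise $-\cC_{\pi-\ga}$ is the \emph{wide} open sector of half-angle $\pi-\ga$ about the negative half-axis, not your $\{|\arg q-\pi|<\ga\}$ (and in (b), not $\{|\arg q-\pi|<\ga'\}$). With your stated readings ($\arg\la\in[\ga,2\pi-\ga]$, $\arg q$ within $\ga$ of $\pi$) the geometric claims you make fail: in (a), $\arg(q\la)$ does not avoid $0$ (take $\arg q=\pi-\ga/2$, $\arg\la=\pi+\ga/2$ and adjust moduli so $q\la=1$); in (b), $q\la=1$ is likewise attainable, so no bound $|1-q\la|\ge\sin(\ga'-\ga)$ can hold; and (c) is outright false under the ``complement'' reading (if $-\|P\|\in\sg(P)$ and $q$ is real negative near $-(1-\sin\ga)/\|P\|$, then $\|(I-qP)^{-1}\|$ is of order $1/\sin\ga$ while $|1-q\|P\||$ is of order $2$, so no $\ga$-independent $C(r)$ exists). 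With the correct readings, your (a) and (b) collapse to the paper's own argument: $q\la\in-\cC_{\pi-(\ga'-\ga)}$, and the distance from $1$ to that sector is $\sin(\ga'-\ga)$. Your part (d) is fine: the distance-to-segment lemma $|1-tw|\ge\tfrac12|1-w|$ for $t\in[0,1]$, $|w|<1$ is true and gives \eq{self_adjoint_bound}; the paper proves the same scalar bound by a direct case analysis in $\la$.

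Second, part (c) is left unproved. You flag it yourself as the main obstacle, and the split you sketch (``$\la$ positive real'' versus ``$|\arg\la|\ge\ga$'') is built on the wrong hypothesis, so it cannot be repaired as stated. What is actually needed (and what the paper extracts from \eq{norm_bound_main} without comment) is the scalar estimate: after rescaling $\|P\|=1$, for $|\arg\la|\le\ga$, $|\la|\le1$, $|q|\le 1-\sin\ga$ one has $|1-q\la|\ge c\,|1-q|$ with $c$ an absolute constant. This does hold and is elementary: by your segment lemma it suffices to bound $|1-se^{i(\varphi+\al)}|$ below by a constant times $|1-se^{i\varphi}|$ for $s=|q|$, $\varphi=\arg q$, $|\al|\le\ga$; writing $|1-se^{i\theta}|^2=(1-s)^2+4s\sin^2(\theta/2)$ and using $1-s\ge\sin\ga\ge(2/\pi)\ga$, treat separately the regime $|\sin(\varphi/2)|\le\ga$ (where the radial term $(1-s)^2$ dominates both sides up to a factor $1+\pi^2$) and the regime $|\sin(\varphi/2)|>\ga$ (where $|\sin((\varphi+\al)/2)|\ge|\sin(\varphi/2)|-\ga/2\ge\tfrac12|\sin(\varphi/2)|$). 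None of this appears in your proposal, so the one part of the theorem that is not an immediate consequence of the spectral theorem plus the geometry you already have remains a gap.
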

\begin{proof} (a) is immediate from the spectral decomposition theorem. (b) For $q\in -\cC_{\pi-\ga'}$ and $\la\in \cC_\ga$,
$q\la\in -\cC_{\pi-\ga'+\ga}$, hence, $|1-\la q|$ is greater than or equal to the distance from the line $e^{i(\ga'-\ga)}\bR$
to 1. Hence, $|1-q\la|\ge \sin(\ga'-\ga)$, and it remains to apply \eq{norm_bound_main}. 
(c) follows from \eq{norm_bound_main}. 
(d) Rescaling, we may assume that $\|P\|=1$. 
Let $r=|q|<1$ and $\la\in [0,1]$. If $\Re q\le 0$, then $|(1-\la q)^{-1}|\le 1< 2|(1-q)|^{-1}$. 
If $\la\in [0,1/2]$, then $|(1-\la q)^{-1}|\le 2\le 4|1-q|^{-1}$. Finally, if
$\la\in (1/2,1]$ and $q=re^{i\varphi}$, $\varphi\in (-\pi/2,\pi/2)$, then 
\beqast
2|1-\la q|^2&=&2(1-2\la r\cos\varphi+\la^2r^2))=2(1-\la r)^2+4\la r(1-\cos\varphi)\\
&>& (1-r)^2+2r(1-\cos\varphi)=|1-q|^2.
\eqast

\end{proof}

\begin{rem}\label{rem:modif_cor}{\rm 
In applications to the evaluation of cpdfs of various kind, $\cH$ is the $L_2$-space with an appropriate exponential weight, equivalently,
$\cH=L_2$ after an appropriate Esscher transform. Next, typically, the transition operator $P$ is a bounded operator from $L_2$ to $C_b$, the space of continuous functions vanishing at infinity\footnote{or even to $\cS$, the space of 
infinitely differentiable functions vanishing at infinity faster than any polynomial, together with all derivatives}, and 
we are interested in bounds in the $C_b$-norm. To derive such a bound, we use
\[
(I-qP)^{-1}=I+qP(I-qP)^{-1},
\]
and the bounds $\|q(I-qP)^{-1}\|\le C|q|$ or $\|q(I-qP)^{-1}\|\le C|1-q/\|P\||^{-1}|q|$, and, assuming that $n>2$, modify the conclusions and proofs of Theorems \ref{thm: sep from 0}
and  \ref{thm:norm_inv_Z}  in the evident manner. Finally, we note that $P:L_2\to C_b$ (or $P:L_2 \to \cS$) is continuous.
}
\end{rem}

  \subsection{Evaluation of expectations  in symmetric random walk models}\label{s:Levy_symmetric}
   Let $Y, Y_j, j=1,2\ldots, $ be i.i.d. $\bR$-valued random variables on a probability space $(\Om,\cB, \bQ)$, and let $\bE$ be the expectation operator under $\bQ$. Let $\Phi$ be the characteristic function of $Y$ under $\bQ$.
For $x\in \bR$, $X_n=x+Y_1+\cdots + Y_n, n=0,1,2,\ldots,$ is a random walk on $\bR$ starting at $x$. In applications to finance, typically, $Y=Z_\barDe, \barDe>0,$ is an increment of a L\'evy process $Z$; the random walk appears implicitly when either a continuous time L\'evy model is approximated or options with discrete monitoring are priced. We impose the symmetry condition used in \cite{CarrLee09}
to justify the semi-static hedging of barrier options  (see \cite{Contrarian} for the discussion).
Under the same condition, representations of KoBoL (CGMY) and Meixner processes as a subordinated Brownian motion 
is derived in \cite{madan-yor}; in \cite{EfficientAmenable}, the representation result was extended to wide classes of Stieltjes-L\'evy processes (SL processes) and signed  SL processes (sSL processes). 
\vskip0.1cm
\noindent
{\sc Condition ($\Phi,\mathrm{sym}$).} 
{\em $\Phi$ admits analytic continuation  to a strip $S_{(\mum,\mup)}$, and there exists $\be\in (-\mup, -\mum)$ such that
  $\Phi(\xi-i\be)=\Phi(i\be-\xi)\ge 0,
\forall\ \xi\in \bR$. }
\vskip0.1cm
\noindent
Let  $h_-<h_+$, and 
let $\tau^-_{h_-}$ (resp., $\tau^+_{h_+}$) denote the first entrance time by $X$ into $(-\infty,\tau^-_{h_-}]$ (resp.,
$[h_+,+\infty)$). For a positive integer $n$ (maturity date), constant $q_0\in (0,1]$ (discount factor per time period), and a measurable function $G$ (payoff function at maturity), we consider expectations
\begin{enumerate}[(1)]
\item
$V_{eu}(G,q_0;n,x)=\bE^{\bQ,x}[q_0^n G(X_n)]$: the price of the European option; 
\item $V^-_{nt}(G, h_-, q_0;n,x)=\bE^{\bQ,x}[q_0^n G(X_n)\bfo_{\tau^-_{h_-}>n}]$ and
$V^+_{nt}(G, h_+, q_0;n,x)=\bE^{\bQ,x}[q_0^n G(X_n)\bfo_{\tau^+_{h_+}>n}]$: the prices of single barrier options with the lower and upper barriers $h_-$ and  $h_+$; 
\item
 $V_{nt}(G, h_-, h_+, q_0;n,x)=\bE^{\bQ,x}[q_0^n G(X_n)\bfo_{\tau^+_{h_+}\wedge \tau^-_{h_-}>n}]$: the price of the double barrier option which expires worthless if $X$ breaches one of the barriers before or on the maturity date.
\end{enumerate}
\vskip0.1cm
\noindent{\sc Condition ($G;\be$).} 
{\em
$G$ can be represented in the form $G(x)=e^{\be x}G_\be(x)$, where $G_\be\in L_2(\bR)$.
}
\begin{example}{\rm 
If $G$ is measurable and locally bounded, Condition ($G;\be$) is satisfied for
\begin{enumerate}[(a)]
\item
 double barrier options;
 \item
 down-and-out put options with the payoff $G(x)=(K-e^x)_+$;
 \item
 up-and-out call options with the payoff $G(x)=(e^x-K)_+$;
\item European put options and up-and-out put options;
\item
European call options and down-and-out put options.
\end{enumerate}
In cases (a)-(c), we can use any $\be\in \bR$ letting $G(x)=0, x\not\in [h_-,h_+]$, $G(x)=0, x\le h_-$, and $G(x)=0, x\ge h_+$, respectively.  In (d), $\be>0$, and in e), $\be<-1$.
}

\end{example}
Under the measure $\bQ_\be$ defined by $\frac{d\bQ_\be}{d\bQ}=e^{\be Y}/\Phi(-i\be)$,  the characteristic function
of $Y$ is $\bE^{\bQ_\be}[e^{i\xi Y}]=\bE^\bQ[e^{i(\xi-i\be) Y}/\Phi(-i\be)]=\Phi_\be(\xi):=\Phi(\xi-i\be)/\Phi(-i\be)$, and
we have 
\[
V_{eu}(G,q_0;n,x)=e^{\be x}\bE^{\bQ_\be,x}[ G_\be(X_n)]=(q_0\Phi(-i\be))^n e^{\be x}V_{eu;\be}(G_\be,1;n,x),
\]
where 
$ V_{eu;\be}(G_\be,1;n,x):=\bE^{\bQ_\be,x}[G_\be(X_n)] $ is the price of the European option in the model with the characteristic
function $\Phi_\be$, payoff $G_\be$, and discount factor 1.  In the same vein, the prices of types (2) and (3) are expressed via 
the prices in the same modified model, with the payoff function $G_\be$. 
Let $P_\be$ be the transition
operator in the modified model, and let $e_{(-\infty,h_+)}: L_2((-\infty,h_+))\to L_2(\bR)$, $e_{(h_-,+\infty)}: L_2((h_-,+\infty))\to L_2(\bR)$,
$e_{(h_-,h_+)}:L_2((h_-,h_+))\to L_2(\bR)$ be the extension-by-zero operators. Let $p_{(-\infty,h_+)}$, $p_{(h_-,+\infty)}$,
$p_{(h_-,h_+)}$ be the adjoints of $e_{(-\infty,h_+)}$, $e_{(h_-,+\infty)}$, $e_{(h_-,h_+)}$ (restriction operators on $(-\infty,h_+)$, $(h_-,+\infty)$, $(h_-,h_+)$, respectively).

We use the following evident fact.
\begin{lem}\label{lem:sym} Let the distribution of a random variable $Y$  on $\bR$  be infinitely divisible, and the characteristic function  satisfy $\Phi(\xi)=\Phi(-\xi)\ge 0, \xi\in\bR$. Then $0< \Psi(\xi)\le 1, \xi\in \bR$, and the expectation operator $P, Pu(x)=\bE[u(x+Y)]$, is a self-adjoint operator in $L_2(\bR)$,
with $\|P\|=1$.
\end{lem}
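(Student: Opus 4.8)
\textbf{Proof proposal for Lemma \ref{lem:sym}.}

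The plan is to verify the three claims in turn, using only elementary Fourier analysis and the infinite divisibility hypothesis. First I would establish the pointwise bound $0<\Psi(\xi)\le 1$. Here $\Psi$ must denote the characteristic function of one of the summands in a divisibility decomposition of $Y$ (or, equivalently, $\Phi$ itself with exponent scaled); in any case, by the L\'evy--Khintchine representation for a symmetric infinitely divisible law, $\Phi(\xi)=e^{-\phi(\xi)}$ where $\phi$ is the characteristic exponent, $\phi(\xi)=\phi(-\xi)\ge 0$, and $\phi(\xi)<\infty$ for all real $\xi$. Symmetry and nonnegativity of $\Phi$ force $\phi$ to be real-valued, so $\Phi(\xi)=e^{-\phi(\xi)}\in(0,1]$, with $\Phi(\xi)=1$ only at $\xi$ where $\phi(\xi)=0$. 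The same applies to any infinitely divisible $n$-th root, which has exponent $\phi/n$; this gives $0<\Psi(\xi)\le 1$ on $\bR$.

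Next I would show $P$ is self-adjoint on $L_2(\bR)$. The operator $Pu(x)=\bE[u(x+Y)]$ is convolution: $Pu=u*\mu$ where $\mu$ is the law of $-Y$ (or of $Y$; by symmetry they coincide). Conjugating by the Fourier transform $\cF$, $\cF P\cF^{-1}$ is multiplication by $\Phi$ (up to the usual sign convention for $\xi$), and since $\Phi$ is real-valued, this multiplication operator is self-adjoint on $L_2$; hence so is $P$. One should note in passing that $P$ is bounded: $\|Pu\|_2\le\|u\|_2$ because $\mu$ is a probability measure (Young's inequality), equivalently $\|\Phi\|_\infty\le 1$.

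Finally, for the operator norm: from $\|\Phi\|_\infty\le 1$ we get $\|P\|\le 1$, and since $\Phi(0)=1$ (a probability measure has total mass one) and $\Phi$ is continuous, $\|P\|=\sup_\xi|\Phi(\xi)|=1$. Concretely, one can test against an approximate identity concentrating the Fourier mass near $\xi=0$: if $\hat u_k$ is supported in a shrinking neighborhood of $0$ with $\|u_k\|_2=1$, then $\|Pu_k\|_2\to 1$. I do not anticipate a genuine obstacle here; the only point requiring a little care is pinning down exactly which function $\Psi$ refers to and invoking infinite divisibility (rather than mere symmetry) to guarantee $\Phi$ stays positive, i.e.\ that $\phi$ is finite and real everywhere, so that the bound is $0<\Psi\le 1$ rather than $|\Psi|\le 1$. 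The rest is the standard dictionary between convolution operators and Fourier multipliers.
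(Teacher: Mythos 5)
Your argument is correct and coincides with what the authors intend: the paper states this lemma without proof (``the following evident fact''), and its implicit justification is exactly your dictionary — $\cF P\cF^{-1}$ is multiplication by the real multiplier $\Phi$ with $\|\Phi\|_\infty=\Phi(0)=1$, while infinite divisibility (no zeros of $\Phi$, or the L\'evy--Khintchine form $\Phi=e^{-\phi}$ with $\phi$ real and finite) upgrades $\Phi\ge 0$ to strict positivity. Your reading of $\Psi$ as a typo for $\Phi$ is the right one, so there is nothing to add.
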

Applying Lemma \ref{lem:sym} and Theorem \ref{thm:norm_inv_Z}, we obtain the following proposition.
\begin{prop}\label{prop: justification_Z_oper} Let the distribution of $Y$ be infinitely divisible, and Conditions $(\Phi,\mathrm{sym})$ and $(G;\be)$ hold. Then
\begin{enumerate}[(a)]
\item
the following operators are non-negative self-adjoint operators, and their norms equal 1:
 $P_\be: L_2(\bR)\to L_2(\bR)$; $P^+_{h_+;\be}:=p_{(-\infty,h_+)}P_\be e_{(-\infty,h_+)}: 
L_2((-\infty,h_+))\to L_2((-\infty,h_+))$, $P^-_{h_-;\be}:=p_{(h_-,+\infty)}P_\be e_{(h_-,+\infty)}: 
L_2((h_-,+\infty))\to L_2((h_-,+\infty))$, \\ $P_{h_-,h_+;\be}:=p_{(h_-,h_+)}P_\be e_{(h_-,h_+)}: L_2((h_-,h_+))\to L_2((h_-,h_+))$;
\item
$V_{eu;\be}(G_\be,1;n,x)=(P^n_\be G_\be)(x)$, $V^\pm_{nt}(G, h_\pm, q_0;n,x)=((P^\pm_{h_\pm;\be})^n G_\be)(x)$,\\
$V_{nt}(G, h_-,h_+, q_0;n,x)=((P_{h_-,h_+;\be})^n G_\be)(x)$;
\item
in symmetric L\'evy models,  the application of the efficient $Z$-transform is justified for the evaluation European options and
single barrier and double options with discrete
monitoring (see \cite{EfficientDiscExtremum} and \cite{EfficientDiscDoubleBarrier}, respectively.)\end{enumerate}
\end{prop}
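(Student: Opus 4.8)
The plan is to reduce each of the three claims to ingredients that are already in place. For part (a), the key observation is that $P_\be$ is, by Lemma~\ref{lem:sym} applied to the symmetric model obtained after the Esscher transform, a non-negative self-adjoint operator on $L_2(\bR)$ with $\|P_\be\|=1$; indeed, Condition $(\Phi,\mathrm{sym})$ guarantees that the characteristic function $\Phi_\be(\xi)=\Phi(\xi-i\be)/\Phi(-i\be)$ is real, even and non-negative, so $P_\be$ acts as multiplication by a function in $[0,1]$ on the Fourier side. The three ``killed'' operators $P^+_{h_+;\be}$, $P^-_{h_-;\be}$, $P_{h_-,h_+;\be}$ are compressions of $P_\be$ of the form $p\,P_\be\,e$ with $e$ an extension-by-zero and $p=e^*$ its adjoint restriction; hence each is of the form $e^*P_\be e$, which is manifestly self-adjoint and non-negative ($\langle e^*P_\be e\,u,u\rangle=\langle P_\be (eu),eu\rangle\ge 0$), with norm $\le\|P_\be\|=1$. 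Equality of the norm to $1$ follows by testing on functions supported well inside the relevant interval and using that $P_\be$ has spectrum reaching up to $1$ (the Fourier multiplier $\Phi_\be$ takes values arbitrarily close to $1$ near $\xi=0$), together with an approximate-identity/localization argument.

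For part (b), I would identify each expectation with an iterated application of the appropriate compressed transition operator. The unconstrained case $V_{eu;\be}(G_\be,1;n,x)=\bE^{\bQ_\be,x}[G_\be(X_n)]=(P_\be^n G_\be)(x)$ is just the Chapman--Kolmogorov/Markov property for the i.i.d.\ increments. For the barrier cases, the event $\{\tau^+_{h_+}>n\}$ means the walk stays in $(-\infty,h_+)$ at each of the times $1,\dots,n$ (equivalently, never enters $[h_+,\infty)$ up to time $n$); conditioning step by step and inserting the indicator of $(-\infty,h_+)$ at each step shows that $\bE^{\bQ_\be,x}[G_\be(X_n)\bfo_{\tau^+_{h_+}>n}]$ equals the $n$-fold composition of ``apply $P_\be$, then restrict to $(-\infty,h_+)$'', i.e. $((p_{(-\infty,h_+)}P_\be e_{(-\infty,h_+)})^n G_\be)(x)=((P^+_{h_+;\be})^n G_\be)(x)$ for $x\in(-\infty,h_+)$, and similarly for the lower and double-barrier cases. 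The translation back from $V^\pm_{nt}(G,h_\pm,q_0;n,x)$ to $V^\pm_{nt}$ absorbs the scalar factor $(q_0\Phi(-i\be))^n e^{\be x}$ exactly as in the European computation displayed in the text; one should double-check (but it is routine) that the Esscher change of measure and the factor $e^{\be x}$ commute correctly with the barrier indicators, which they do since the indicators depend only on the path of $X$, not on the measure.

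For part (c), the point is simply to invoke the machinery already assembled: by (a) each of the four operators $A\in\{P_\be, P^\pm_{h_\pm;\be}, P_{h_-,h_+;\be}\}$ is a non-negative self-adjoint (hence normal, non-invertible unless trivial) operator on a Hilbert space with $\|A\|=1$, so Theorem~\ref{thm:norm_inv_Z}(a),(d) applies: $(I-qA)^{-1}$ exists on $(-\cC_{\pi})\cup\cD(0,1)$ and satisfies the resolvent bound $\|(I-qA)^{-1}\|\le 4/|1-q|$ there. In the language of Condition $Z$-SINH, this is exactly the operator form with $\ga=0$ and $a_\tV$ small (cf.\ Remark~\ref{rem:modif_cor} for passing to the $C_b$- or $\cS$-norm needed for pointwise evaluation of cpdfs), so Lemma~\ref{lem:Z-SINH} and the Algorithm of Section~\ref{param_choice_Z_SINH} give a justified sinh-deformation of the contour in \eq{VB} for $n>a_\tV$, and the efficient inverse $Z$-transform \eq{Vn_inf_sinh_trunc} is applicable to $V_{eu}$, $V^\pm_{nt}$ and $V_{nt}$ via the representations in (b). The cross-references to \cite{EfficientDiscExtremum} and \cite{EfficientDiscDoubleBarrier} supply the concrete error analysis and numerics in the single- and double-barrier cases.

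I expect the only genuinely non-routine step to be the careful justification in part (a) that the compressed operators have norm exactly $1$ (as opposed to merely $\le 1$) and, more importantly, that they are non-invertible so that the self-adjoint branch \eq{self_adjoint_bound} of Theorem~\ref{thm:norm_inv_Z} is the relevant one; this uses that $\Phi_\be$, being the characteristic function of a genuinely non-degenerate (infinitely divisible) law, decays at infinity, so $0\in\sg(P_\be)$ and, by a localization argument, $0\in\sg$ of each compression as well. Everything else is bookkeeping: the Markov-property identities in (b), and the verbatim application of Theorem~\ref{thm:norm_inv_Z} and Lemma~\ref{lem:Z-SINH} in (c).
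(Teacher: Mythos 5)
Your overall route is the same as the paper's, which proves the proposition simply by invoking Lemma \ref{lem:sym} and Theorem \ref{thm:norm_inv_Z}: Lemma \ref{lem:sym} gives non-negativity, self-adjointness and norm $1$ of $P_\be$, the killed operators are compressions $e^*P_\be e$, part (b) is the Markov property, and part (c) follows from Theorem \ref{thm:norm_inv_Z}, Remark \ref{rem:modif_cor}, Lemma \ref{lem:Z-SINH} and the Algorithm of Section \ref{param_choice_Z_SINH}. Parts (b) and (c) of your write-up are fine.

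In part (a), however, your argument that the compressed operators have norm exactly $1$ only works for the half-line operators $P^\pm_{h_\pm;\be}$, where you can translate a near-maximizing, Fourier-concentrated state far from the single barrier and use translation invariance of $P_\be$. It cannot work for the double-barrier operator $P_{h_-,h_+;\be}$: a unit vector $u$ supported in an interval of length $L=h_+-h_-$ satisfies $|\hat u(\xi)|\le \sqrt{L}\,\|u\|_{L_2}$, so its Fourier mass cannot concentrate near $\xi=0$, and since typically $\sup_{|\xi|\ge\de}\Phi_\be(\xi)<1$ for every $\de>0$ (e.g.\ Brownian or any absolutely continuous increments), the quadratic form $\langle P_{h_-,h_+;\be}u,u\rangle=\frac{1}{2\pi}\int\Phi_\be(\xi)|\hat u(\xi)|^2d\xi$ is bounded away from $\|u\|^2$ uniformly; hence $\|P_{h_-,h_+;\be}\|<1$ in general. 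This is an imprecision already present in the statement of the proposition, and it is harmless downstream, since only self-adjointness, non-negativity and $\|\cdot\|\le 1$ are used in (b)--(c) (a norm strictly below $1$ only helps), but your sketch should not claim to prove it. A second, smaller point: your justification of non-invertibility via ``the characteristic function of a non-degenerate infinitely divisible law decays at infinity'' is false in general — for a symmetric compound Poisson with jumps $\pm1$ one has $\Phi(\xi)=e^{\lambda(\cos\xi-1)}\ge e^{-2\lambda}>0$, so $\Phi$ does not decay and $P_\be$ is invertible. Fortunately non-invertibility is not needed: the bound \eq{self_adjoint_bound} for non-negative self-adjoint operators, which is what part (c) actually uses, holds regardless, so you should simply drop that claim rather than try to prove it.
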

\vskip0.1cm
\noindent
{\sc Parameter choice.} We take a small $\ga>0$, and  use the prescription in 
Section \ref{param_choice_Z_SINH}. The constant in the bound for $H(f_n,d_n)$ is derived using 
the bounds in Theorem \ref{thm:norm_inv_Z}.

\section{European options I}\label{s:Euro_symm}

\subsection{Symmetric case}\label{ss:Euro_symm} 
Denote by $\hG$ the Fourier transform of $G$.
\begin{thm}\label{thm:Euro_symm}
Let $n>2$ and Conditions $(\Phi,\mathrm{sym})$ and $(G;\be)$ hold. 

Then, for any $r_0\in (0,1/\Phi(-i\be))$ and $\om_\ell\in (-\pi/2,0)$, there exist $\sg_\ell\in \bR$ and $b_\ell>0$
such that $\sg_\ell-b_\ell\sin(\om_\ell)=r_0$, and \bbe\label{eq:Euro_price_symm}
V_{eu}(G,q_0;n,x)
=\frac{q_0^n}{2\pi i}\int_{\cL_{L; \sg_\ell,b_\ell,\om_\ell}}dq\, q^{-n}\frac{1}{2\pi}\int_{\Im\xi=-\be}
e^{ix\xi}\frac{\Phi(\xi)\hG(\xi)}{1-q\Phi(\xi)}d\xi.
\ee
\end{thm}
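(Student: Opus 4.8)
The plan is to reduce the statement to the operator-theoretic framework already built in Sections \ref{s:oper_form}--\ref{s:Levy_symmetric} and then to transcribe the resulting operator identity into the Fourier side. First I would invoke Proposition \ref{prop: justification_Z_oper}(b), which gives the representation $V_{eu}(G,q_0;n,x)=(q_0\Phi(-i\be))^n e^{\be x}(P_\be^n G_\be)(x)$, where $P_\be$ is the (non-negative, self-adjoint, norm-one) transition operator in the Esscher-transformed model. By Lemma \ref{lem:sym} together with Theorem \ref{thm:norm_inv_Z}(a),(d), the operator $P_\be$ satisfies the operator form of Condition $Z$-SINH($\ga$) with $\ga=0$, and in particular $\rho(P_\be)=\|P_\be\|=\Phi(-i\be)$ (this is where the normalization by $\Phi(-i\be)$ enters). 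Hence the Cauchy representation \eq{VB} applies to $P_\be$: for any $r\in(0,1/\Phi(-i\be))$,
\[
(P_\be^n G_\be)(x)=\frac{1}{2\pi i}\int_{|q|=r}(I-qP_\be)^{-1}G_\be(x)\,q^{-n-1}dq.
\]

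Next I would perform the sinh-deformation of the contour $\{|q|=r_0\}$ into $\cL_{L;\sg_\ell,b_\ell,\om_\ell}$. Since $\ga=0$ here, Remark \ref{rem:om_ell_de_ell}(b) permits $\om_\ell\in(-\pi/2,0)$; the existence of $\sg_\ell\in\bR,\ b_\ell>0$ with $\sg_\ell-b_\ell\sin(\om_\ell)=r_0$, together with the fact that the curve stays inside $(-\cC_{\pi})\cup\cD(0,1/\|P_\be\|)$ and the resolvent bound \eq{self_adjoint_bound} controls the integrand, is exactly the content of Lemma \ref{lem:Z-SINH} (and Theorem \ref{thm: sep from 0}) applied to $\rho(P_\be)^{-1}P_\be$; one uses $n>2$ so that $q^{-n-1}(I-qP_\be)^{-1}$ decays at infinity along the rays and the deformation is justified by Cauchy's theorem. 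This yields
\[
(P_\be^n G_\be)(x)=\frac{1}{2\pi i}\int_{\cL_{L;\sg_\ell,b_\ell,\om_\ell}}q^{-n-1}(I-qP_\be)^{-1}G_\be(x)\,dq.
\]

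Finally I would rewrite $(I-qP_\be)^{-1}G_\be$ on the Fourier side. Since $P_\be$ is convolution by a probability measure with characteristic function $\Phi_\be(\xi)=\Phi(\xi-i\be)/\Phi(-i\be)$, conjugation by $\cF$ diagonalizes it: $\cF P_\be\cF^{-1}$ is multiplication by $\Phi_\be(\xi)$. Because $G(x)=e^{\be x}G_\be(x)$ with $G_\be\in L_2$, the Fourier transform $\hG(\xi)$ is well-defined for $\Im\xi=-\be$, and $\widehat{G_\be}(\xi)=\hG(\xi+i\be)$; pushing the $\xi$-contour to the line $\Im\xi=-\be$ and absorbing the factor $q^{-1}$ and the normalizations $q^n\mapsto (q_0\Phi(-i\be))^n q^n$, $e^{\be x}$, one obtains, after Fubini,
\[
(I-qP_\be)^{-1}G_\be(x)=\frac{1}{2\pi}\int_{\Im\xi=-\be}e^{ix\xi}\,\frac{\hG(\xi)}{1-q\Phi(\xi)/\Phi(-i\be)}\,d\xi
\]
(the extra $\Phi(\xi)$ in the numerator of \eq{eq:Euro_price_symm} comes from writing $V_{eu}=\bE[q_0^nG(X_n)]$ directly, i.e. applying one extra step $P_\be$, equivalently replacing $(I-qP_\be)^{-1}$ by $P_\be(I-qP_\be)^{-1}=q^{-1}((I-qP_\be)^{-1}-I)$ and noting the $I$-term integrates to zero over the closed contour for $n\ge1$). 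Substituting this into the $q$-integral and collecting the constants $q_0^n$, $\Phi(-i\be)^n$ so that the rescaled spectral parameter matches the statement gives \eq{eq:Euro_price_symm}.

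\emph{Main obstacle.} The genuinely delicate point is the justification of the contour deformation and of the Fubini interchange of the $q$- and $\xi$-integrals: one must check that along $\cL_{L;\sg_\ell,b_\ell,\om_\ell}$ the double integrand is absolutely integrable and that the arcs at infinity vanish. For the $q$-integral this uses $n>2$ together with the bound \eq{self_adjoint_bound} (or its $C_b$-version from Remark \ref{rem:modif_cor}) to see $\|q^{-n-1}(I-qP_\be)^{-1}\|=O(|q|^{-n-1}\cdot|1-q\Phi(-i\be)/\dots|^{-1})$; for the $\xi$-integral one needs the joint decay of $\hG(\xi)$ on $\Im\xi=-\be$ against $|1-q\Phi(\xi)|^{-1}$, which is where a decay assumption on $\hG$ (implicit via $G_\be\in L_2$ plus the structure of standard payoffs) is invoked. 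Everything else is bookkeeping with the Esscher normalization $\Phi(-i\be)$.
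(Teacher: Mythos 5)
Your proposal follows essentially the same route as the paper's proof: reduce to the Esscher-transformed operator $P_\be$ via Proposition \ref{prop: justification_Z_oper}, represent $P_\be^n$ by the Cauchy formula \eq{VB} (trading $q^{-n-1}(I-qP_\be)^{-1}$ for $q^{-n}P_\be(I-qP_\be)^{-1}$, as you note), deform $\{|q|=r\}$ into $\cL_{L;\sg_\ell,b_\ell,\om_\ell}$ using Theorems \ref{thm: sep from 0} and \ref{thm:norm_inv_Z} modified as in Remark \ref{rem:modif_cor} (this is where $n>2$ enters), and then pass to the Fourier side using that $P$ is a pdo with symbol $\Phi$, rescaling $q$ by $\Phi(-i\be)$ at the end. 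One correction to your write-up: $\|P_\be\|=\rho(P_\be)=1$ (Lemma \ref{lem:sym} and Proposition \ref{prop: justification_Z_oper}(a)), not $\Phi(-i\be)$. The restriction $r_0\in(0,1/\Phi(-i\be))$ in the statement does not come from the spectral radius of $P_\be$; it comes from the change of variables $q\mapsto q\Phi(-i\be)$ induced by $e^{\be x}P_\be e^{-\be x}=\Phi(-i\be)^{-1}P$, i.e.\ one works with $r\in(0,1)$ for $P_\be$ and obtains $r_0=r/\Phi(-i\be)$ after rescaling. As literally written, your step ``the Cauchy representation applies to $P_\be$ for any $r\in(0,1/\Phi(-i\be))$'' would fail whenever $\Phi(-i\be)<1$, since circles of radius $r>1$ then cross the singularities of $(I-qP_\be)^{-1}$; your final bookkeeping (``collecting the constants so that the rescaled spectral parameter matches'') is where the correct normalization actually enters, exactly as in the paper, and with this fix the argument is sound. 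A further minor point: your displayed formula for $(I-qP_\be)^{-1}G_\be$ omits the factor $e^{-\be x}$ produced by shifting the $\xi$-contour to $\Im\xi=-\be$ and keeps the unrescaled denominator $1-q\Phi(\xi)/\Phi(-i\be)$, but these are absorbed in the normalization step you describe.
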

\begin{proof} 
For any $r\in (0,1)$, 
\[
V_{eu,\be}(G,1;n,x)=\frac{1}{2\pi i}\int_{|q|=r}q^{-n-1}((I-qP_\be)^{-1}G_\be)(x)
=\frac{1}{2\pi i}\int_{|q|=r}q^{-n}(P_\be(I-qP_\be)^{-1}G_\be)(x).
\]
Take  $\om_\ell\in (-\pi/2, 0)$. Applying Theorems \ref{thm: sep from 0}-\ref{thm:norm_inv_Z} modified as in Remark \ref{rem:modif_cor}, we can choose $\sg_{\ell,\be}\in \bR$ and $b_{\ell,\be}>0$ so that $\sg_{\ell,\be}-b_{\ell,\be}\sin(\om_\ell)=r$ and  the contour $\{|q|=r\}$ can be deformed into $\cL_{L; \sg_{\ell,\be},b_{\ell,\be},\om_\ell}$. The result is
 \bbe\label{Veu3}
V_{eu,\be}(G,1;n,x)=\frac{1}{2\pi i}\int_{\cL_{L; \sg_{\ell,\be},b_{\ell,\be},\om_\ell}}dq\,q^{-n}(P_\be(I-qP_\be)^{-1}G_\be)(x).
\ee
Multiplying the both sides by $e^{\be x}(q_0\Phi(-i\be))^n$ and taking into account that $G_\be(x)=e^{-\be x}G(x)$ and
$e^{\be x}P_\be e^{-\be x}=\Phi(-i\be)^{-1}P$, we obtain 
\beqast
 V_{eu}(G,q_0;n,x)
&=&\frac{q_0^n\Phi(-i\be)^{n}}{2\pi i}\int_{\cL_{L; \sg_{\ell,\be},b_{\ell,\be},\om_\ell}}dq\, 
q^{-n}\Phi(-i\be)^{-1}P(I-q \Phi(-i\be)^{-1}P)^{-1}G)(x).
\eqast
It remains to change the variable $q_\be= q\Phi(-i\be)$,
 take into account that $P$ is the pdo\footnote{Recall that $A=a(D)$ is a pseudo-differential operator (pdo) with the symbol $a$
if $Au(x)=\cF^{-1}_{\xi\to x}a(\xi)\cF_{x\to \xi}u(x)$, where $\cF$ is the Fourier transform. See., e.g., \cite{eskin,NG-MBS}.}  with the symbol $\Phi(\xi)$, and
apply the Fourier transform $\cF$ to evaluate $(P(I-q_\be P)^{-1}G)(x)$. 
\end{proof}

\begin{rem}\label{rem: Euro_symm}
{\rm
\begin{enumerate}[(a)]
\item
Approximate optimal parameters of the sinh-deformation can be chosen using the algorithm in Section \ref{param_choice_Z_SINH}.

\item
Eq. \eq{eq:Euro_price_symm} is valid in the sense of  generalized functions. In order that \eq{eq:Euro_price_symm}
holds in the classical sense, we require that the product $\Phi\hG\in L_1(-i\be+\bR)$.
If $\Phi\hG\not\in L_1(-i\be+\bR)$, then the RHS of \eq{eq:Euro_price_symm} can be regularized
using the integration  by parts. See, e.g. \cite{NG-MBS}.
\end{enumerate}
}
\end{rem}

\subsection{Weak perturbation of symmetric distributions}\label{ss:Euro_non_symm}

\begin{prop}\label{prop:Euro_symm_small_pert} Let $n>2$, and Condition $(G;\be)$  and the following conditions hold:
  \begin{enumerate}[(i)]
  \item
  $\Phi=a_0\Phi_0+a_1\Phi_1$, where $a_0,a_1>0$, $a_0+a_1=1$, and $\Phi_0, \Phi_1$ are the characteristic functions
  of random variables $Y_0,Y_1$ on $\bR$;
  \item
   probability distribution of $Y_0$ is infinitely divisible;
  \item
  there exist $\mum<0<\mup$   such that  $\Phi_1$ and $\Phi_0$ admit analytic continuation 
  to $S_{(\mum,\mup)}$;
  
  \item
  there exists $\ga\in (0,\pi/2)$ such that, for 
  $\xi\in S_{(\mum,\mup)}$, 
   $\Phi(\xi)/\Phi_0(\xi)\in \cC_\ga$;
  \item
  there exists $\be\in (-\mup,-\mum)$ such that $\Phi_0(\xi-i\be)=\Phi_0(i\be-\xi)\ge 0$, $\xi\in \bR$.
  \end{enumerate}
Then  the statements of Theorem \ref{thm:Euro_symm} are valid.
\end{prop}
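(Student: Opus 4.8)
The plan is to reproduce the proof of \theor{thm:Euro_symm} essentially verbatim: the only genuinely new point is that the appeal to \lemm{lem:sym} (which furnishes self-adjointness of the transition operator in the \emph{symmetric} model) must be replaced by a direct verification that, under (i)--(v), the Esscher-transformed transition operator is \emph{sectorial}; once this is established, \theor{thm: sep from 0} together with Remark~\ref{rem:modif_cor} provides precisely the contour deformation that the proof of \theor{thm:Euro_symm} uses, and everything downstream is unchanged.

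First I would introduce the Esscher measure $d\bQ_\be/d\bQ=e^{\be Y}/\Phi(-i\be)$. By (iii), $\Phi$ is analytic on $S_{(\mum,\mup)}$, and $-\be\in(\mum,\mup)$ by (v), so $\Phi(-i\be)=\bE^{\bQ}[e^{\be Y}]\in(0,\infty)$. Under $\bQ_\be$ the characteristic function of $Y$ is $\Phi_\be(\xi)=\Phi(\xi-i\be)/\Phi(-i\be)$, and the transition operator $P_\be u(x)=\bE^{\bQ_\be}[u(x+Y)]$ is the pdo with symbol $\Phi_\be$; since $|\Phi_\be(\xi)|\le1$ on $\bR$, $P_\be$ is a bounded normal operator on $L_2(\bR)$, $\|P_\be\|\le1$, and $\sg(P_\be)\subset\overline{\Phi_\be(\bR)}$. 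The crucial step is to show $\sg(P_\be)\subset(\cC_\ga)^c$. For $\xi\in\bR$ one has $\xi-i\be\in S_{(\mum,\mup)}$, so I would write
\[
\Phi_\be(\xi)=\frac{\Phi(\xi-i\be)}{\Phi_0(\xi-i\be)}\cdot\frac{\Phi_0(\xi-i\be)}{\Phi(-i\be)}\,.
\]
By (iv) the first factor lies in $\cC_\ga$; the second factor is a \emph{positive real}, since $\Phi_0(\xi-i\be)\ge0$ by (v) while $\Phi_0$ is zero-free (the characteristic function of an infinitely divisible law is nonvanishing on $\bR$ by (ii), and this persists on $S_{(\mum,\mup)}$ by analytic continuation), hence $\Phi_0(\xi-i\be)>0$, and $\Phi(-i\be)>0$ as noted. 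Therefore $\Phi_\be(\xi)\in\overline{\cC_\ga}=(\cC_\ga)^c$ for every $\xi\in\bR$, so $\sg(P_\be)\subset(\cC_\ga)^c$ and $\rho(P_\be)\le1$. Thus $P_\be$ satisfies the hypotheses of \theor{thm: sep from 0} with the angle $\ga\in(0,\pi/2)$ from (iv): $I-qP_\be$ is invertible and the resolvent bound \eq{bound_P_gen} holds; with Remark~\ref{rem:modif_cor} (using $n>2$) one gets the $C_b$-bounds for $qP_\be(I-qP_\be)^{-1}$, and hence, exactly as in \lemm{lem:Z-SINH} and Remark~\ref{rem:om_ell_de_ell}, the deformation of $\{|q|=r\}$ ($r\in(0,1)$) into $\cL_{L;\sg_{\ell,\be},b_{\ell,\be},\om_\ell}$ is justified for $\om_\ell$ in the admissible range, which for $\ga$ small essentially fills the interval $(-\pi/2,0)$ of \theor{thm:Euro_symm}.

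With the deformation available, the remainder is the bookkeeping of \theor{thm:Euro_symm}: starting from $V_{eu,\be}(G,1;n,x)=\frac{1}{2\pi i}\int_{|q|=r}q^{-n}(P_\be(I-qP_\be)^{-1}G_\be)(x)\,dq$, deform to $\cL_{L;\sg_{\ell,\be},b_{\ell,\be},\om_\ell}$, multiply by $e^{\be x}(q_0\Phi(-i\be))^n$, use $G_\be(x)=e^{-\be x}G(x)$ and $e^{\be x}P_\be e^{-\be x}=\Phi(-i\be)^{-1}P$ (the pdo with symbol $\Phi(\xi)$), substitute $q_\be=q/\Phi(-i\be)$ so that the vertex moves from $r$ to $r_0=r/\Phi(-i\be)\in(0,1/\Phi(-i\be))$ (and rename $q_\be$ back to $q$), and apply $\cF$ to evaluate $(P(I-q_\be P)^{-1}G)(x)$ along the line $\Im\xi=-\be$. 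This reproduces \eq{eq:Euro_price_symm}, i.e.\ the conclusion of \theor{thm:Euro_symm}.

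The main obstacle is concentrated entirely in the key step: all of the operator-theoretic content has already been done in \theor{thm: sep from 0}, and what must be handled with care is (a) passing from (iv), which is stated on the whole strip $S_{(\mum,\mup)}$, to the needed statement about $\Phi_\be$ on the \emph{real line} alone --- that is, about $\Phi/\Phi_0$ restricted to the horizontal slice $\Im\xi=-\be$ --- and (b) upgrading $\Phi_0(-i\be)\ne0$ and $\Phi(-i\be)\ne0$ to strict positivity, for which zero-freeness of infinitely divisible characteristic functions together with the symmetry/positivity (v) are exactly what is required. Everything else is routine once sectoriality of $P_\be$ is in hand.
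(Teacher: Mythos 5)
Your proposal is correct and follows essentially the same route as the paper: one verifies from (i)--(v) that $\Phi_\be(\xi)\in(\cC_\ga)^c$ for all real $\xi$ (the paper does this implicitly, you via the factorization $\Phi_\be=(\Phi/\Phi_0)\cdot(\Phi_0(\cdot-i\be)/\Phi(-i\be))$), notes that $P_\be$ is Fourier-conjugate to multiplication by $\Phi_\be$ so that Theorem \ref{thm: sep from 0} (with Remark \ref{rem:modif_cor}) applies, and then repeats the contour deformation and Esscher-transform bookkeeping of Theorem \ref{thm:Euro_symm}. The extra care you take with the positivity of $\Phi(-i\be)$, the non-vanishing of $\Phi_0$, and the admissible range of $\om_\ell$ is consistent with, and slightly more explicit than, the paper's one-line argument.
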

\begin{proof} Theorem \ref{thm: sep from 0} is applicable to $\Phi_\be$ because
 $\cF P_\be \cF^{-1}=\Phi_\be$ is the multiplication-by-$\Phi_\be(\xi)$ operator acting in the spaces of (generalized) functions on
the dual space, $\cF:L_2(\bR)\to L_2(\bR)$ is an isometry, and under conditions (i)-(v), 
$\Phi_\be(\xi)\in (\cC_\ga)^c$ for all $\xi\in \bR$.
\end{proof}

\subsection{Efficient realizations}\label{ss:eff_real} For $\gam< 0 <\gap$, define $\cC_{\gam,\gap}=\{\rho e^{i\varphi}\ |\ \rho>0, \gam<\varphi<\gap \ \mathrm{or}\ 
\pi-\gam<\varphi<\pi-\gap\}$.
 The characteristic exponents, hence, characteristic functions of 
increments $Y$ of wide classes of L\'evy processes admit analytic continuation to regions of the form $i(\mu,\mup)+(\cC_{\gam,\gap}\cup\{0\})$,
where  $\mum< 0< \mup, \mum<\mup, \gam< 0 <\gap$, and decay polynomially or faster as $\xi\to\infty$ in
$i(\mu,\mup)+(\cC_{\gam,\gap}\cup\{0\})$. The distributions satisfying this conditions are called {\em sinh-regular}. See 
 \cite{SINHregular,EfficientAmenable}.
The reader can construct examples of non-infinitely divisible distributions satisfying this condition as well.

On $G$, we impose the following additional condition.
\vskip0.1cm
\noindent
{\sc Condition ($\hG;a$).} $\hG(\xi)=e^{-ia\xi}\hG_0(\xi)$, where $a\in \bR$, and $\hG_0$ is a function analytic in $\bC\setminus i\bR$, and meromorphic in the complex plane with two cuts $i(-\infty, \mum]$ and $i[\mup,+\infty)$. Furthermore, for any $\ga\in (0,\pi/2)$, there exists
$C(\ga)>0$ such that 
\bbe\label{eq:boundhG0}
|\hG_0(\xi)|\le C(\ga)|\xi|^{-1},\ \xi\in \cC_\ga\cup (-\cC_\ga).
\ee 
\begin{rem}\label{rem:hG}{\rm
(a) Condition ($\hG;a$) is satisfied for the payoff functions of put and call options and digitals.
If $G$ is approximated by piece-wise polynomials, then $\hG$ is a sum of functions satisfying Condition ($\hG;a$).
This fact was used in \cite{levendorskii-xie-Asian,MarcoDiscBarr} where the conformal deformations technique was applied to price Asian and barrier options, respectively, with discrete monitoring. 

(b) The constructions below can be adjusted to the case of meromorphic functions having infinite number of poles outside $i\bR$,
in the same vein as in \cite{BSINH}, where the conformal deformations technique is used to calculate the coefficients in the B-spline method. In the pricing formulas with the deformed contours in the $\xi$-space, an additional sum of residues at the poles crossed
in the process of deformation appears.

(c) Under Conditions ($G;\be$) and ($\hG;a$), there exist $\mumpr<-\be<\muppr$ such that $\hG$ is analytic in $S_{(\mumpr,\muppr)}$.

(d) Condition ($\hG;a$) can be relaxed as follows. One may assume that $\hG$ admits analytic continuation to the union $\cU_{\hG}$ of a strip and cone around
the line $\{\Im\xi=-\be\}$. The deformed contours must be subsets of the intersection of $i(\mum,\mup)+(\cC_{\gam,\gap}\cup\{0\})$ and $\cU_{\hG}$.

}
\end{rem}

Under Condition ($\hG;a$), it is possible to decrease the CPU cost of a numerical realization of
the double integral on the RHS of \eq{eq:Euro_price_symm} as follows \cite{EfficientDiscExtremum}. 
Since $\Phi(\xi)>0$ for $\Im\xi=-i\be$, we can justify the deformation of the outer contour and choose the parameters 
of the deformation as described for symmetric operators in Section \ref{param_choice_Z_SINH};
the bound for the Hardy norm necessary to choose the step $\ze_\ell$
is derived using Theorem \ref{thm:norm_inv_Z}.
Then we make the change of variables
$q=\chi_{L;\sg_\ell, b_\ell, \om_\ell}(y)$ in the outer integral
\beqast
V_n&:=&\frac{1}{2\pi i}\int_{\cL_{L; \sg_\ell,b_\ell,\om_\ell}}dq\, q^{-n}\frac{1}{2\pi}\int_{\Im\xi=-\be}
e^{ix\xi}\frac{\Phi(\xi)\hG(\xi)}{1-q\Phi(\xi)}d\xi\\
&=&\frac{b_\ell}{2\pi }\int_{\bR}(\chi_{L; \sg_\ell,b_\ell,\om_\ell}(y))^{-n}\cosh(i\om_\ell+y)
 \frac{1}{2\pi}\int_{\Im\xi=-\be}
e^{i(x-a)\xi}\frac{\Phi(\xi)\hG_0(\xi)}{1-\chi_{L; \sg_\ell,b_\ell,\om_\ell}(y)\Phi(\xi)}d\xi,
\eqast
and apply  the simplified trapezoid rule to the outer integral:
\bbe\label{eq:VnEuroSimpTrap}
V_n=\sum_{|j|\le N_\ell}\ze_\ell f_{n,j}I(n,q_j),
\ee
where $q_j=\chi_{L; \sg_\ell,b_\ell,\om_\ell}(j\ze_\ell)$ and
\beqa\label{fnj}
f_{n,j}&=&\frac{b_\ell\ze_\ell}{2\pi }(\chi_{L; \sg_\ell,b_\ell,\om_\ell}(j\ze_\ell))^{-n}\cosh(i\om_\ell+j\ze_\ell),
\\\label{Inj}
I(q)&=&\frac{1}{2\pi}\int_{\Im\xi=-\be}
e^{i(x-a)\xi}\frac{\Phi(\xi)\hG_0(\xi)}{1-q\Phi(\xi)}d\xi.
\eqa
For each $|j|\le N_\ell$, we evaluate $I(q_j)$
deforming the line of integration $\{\Im\xi=-\be\}$ into a contour of the form
$\cL_{\om_1,b,\om}=\chi_{\om_1,b,\om}(\bR)$, where $\chi_{\om_1,b,\om}(y)=i\om_1+b\sinh(i\om+y)$, $\om_1\in \bR, b>0, \om\in (-\pi/2,\pi/2)$:
\bbe\label{Inj2}
I(q)=\frac{1}{2\pi}\int_{\cL_{\om_1,b,\om}}
e^{i(x-a)\xi}\frac{\Phi(\xi)\hG_0(\xi)}{1-q\Phi(\xi)}d\xi.
\ee
\begin{rem}\label{rem:sinh_par_choice}{\rm 
\begin{enumerate}[(1)]
\item
The parameters $\om_1,b,\om$ must be chosen so that, in the process of deformation, the deformed curve remains in the domain of analyticity of $\Phi$ and $\hG_0$, and
$1-q_j\Phi(\xi)\not\in (-\infty,0]$ for all $\xi$ on the curve; the choice of $\om_1,b,\om$ may depend on $q_j$ but, typically, it is possible to construct one deformation 
for all $q_j$ in \eq{eq:VnEuroSimpTrap}. 
\item
The sign of $\om$ depends on the sign of $x-a$ because it is advantageous to turn the oscillating factor into a decaying one. However, if there exists $\ka\ge 1$ such that $\Phi(\xi)=O(|\xi|^{-\ka})$ as $\xi\to\infty$ in a cone of analyticity of $\hG_0$ and $\Phi(\xi)$, then it is admissible (albeit non-optimal) to use $\om$ of either sign or $\om=0$.
 For details, see \cite{EfficientDiscExtremum}.
\end{enumerate}
}
\end{rem}

 \section{European options II}\label{s:Euro_Levy_non-symmetric}
  In this section, we consider pricing of European options in non-symmetric L\'evy models. 

 \subsection{Assumptions}
 Let $\psi(\xi)=-\ln\Phi(\xi)$; the branch of the logarithm is fixed by $\ln 1=0$. 
 \vskip0.1cm
 \noindent
 {\sc Condition $(\psi,\infty)$.} The characteristic exponent $\psi$ enjoys the following properties:
 \begin{enumerate}[(1)]
 \item
 there exist $\mum\le 0\le \mup$,
 $\mum<\mup$, such that $\psi$ admits analytic continuation to $\bC\setminus i((-\infty,\mum]\cup[\mup,+\infty))$;
 \item
 there exist $\mu\in \bR$ and $\nu_0>\nu_1>\cdots >\nu_N$ such that for any $\ga\in(0,\pi/2)$, 
 \bbe\label{eq:as_exp_psi}
\psi(\xi)+i\mu\xi=\sum_{j=0}^{N-1}d_j\xi^{\nu_j}+O(|\xi|^{-\nu_N}), \ (\cC_\ga\ni)\xi\to \infty;
\ee
\item
$2\ge \nu_0>0$;  $\nu_N<0$; $\Re d_j>0, j\le N-1$; $d_0>0$;  if $\nu_j= 1$; $d_j>0$;  if $\nu_0<1$, $\mu=0$.
\end{enumerate}

Thus, if the L\'evy process with the characteristic exponent $\psi$ is of finite variation, it must be a driftless process.
\footnote{
Using the identity
$\psi(-\bar\xi)=\overline{\psi(\xi)}$, one can write the analog of \eq{eq:as_exp_psi} for $\xi$ in the left half-plane.}

Condition \eq{eq:as_exp_psi} holds for the characteristic exponents of regular Stieltjes-L\'evy processes 
and signed Stieltjes-L\'evy-processes (regular sSL-processes and SL-processes), under additional conditions on the parameters characterizing the
process. The classes of sSL- and SL-processes are introduced in \cite{EfficientAmenable},
where it is demonstrated that essentially all popular classes of L\'evy processes and their mixtures are regular SL-processes
 satisfying \eq{eq:as_exp_psi}, with some exceptions when the log-terms or the terms with log-factors appear.
 For the sake of brevity, we do not consider a more general case when the terms of the form $\xi^\nu\ln \xi$ are present.
The constructions and results below admit straightforward  modifications to the more general case.

The characteristic exponent of an SL process  on $\bR$ is of the form
\bbe\label{eq:sSLrepr}
\psi(\xi)=(a^+_2\xi^2-ia^+_1\xi)ST(\cG^0_+)(-i\xi)+(a^-_2\xi^2+ia^-_1\xi)ST(\cG^0_-)(i\xi)+(\sg^2/2)\xi^2-i\mu\xi, 
\ee
where $\mu\in \bR$, $\sg\ge 0$, $\mum\le 0\le \mup$, $a^\pm_j\ge 0, j=1,2$, $\cG^0_\pm$ are Stieltjes measures supported on $[\pm\mu_\pm,+\infty)$,
and 
\[
ST(\cG)(z)=\int_{(0,+\infty)} (t+z)^{-1}\cG(dt)
\]
is the Stieltjes transform of the Stieltjes measure $\cG$. 
A function $\psi$ of the form \eq{eq:sSLrepr} admits analytic continuation to $\bC\setminus(i(-\infty, \mum]\cup i[\mup,+\infty))$. It is proved in \cite{EfficientAmenable} that if $\cG^0_\pm(dt)=p^\pm(t)dt$, \eq{eq:as_exp_psi} can be derived from asymptotic expansions of $p^\pm(t)$ as $t\to+\infty$.

We list several examples of regular SL-processes and distributions.

\begin{example}\label{ex: Phi_asymp_Phi_KBL}{\rm 
 A generic process of Koponen's family  \cite{genBS,KoBoL} is constructed as a mixture of spectrally negative and positive pure jump processes, with the L\'evy measure
\begin{equation}\label{KBLmeqdifnu}
F(dx)=c_+e^{\lm x}x^{-\nu_+-1}\bfo_{(0,+\infty)}(x)dx+
 c_-e^{\lp x}|x|^{-\nu_--1}\bfo_{(-\infty,0)}(x)dx,
\end{equation}
where $c_\pm>0, \nu_\pm\in [0,2), \lm<0<\lp$.
If $\nu_\pm\in (0,2), \nu_\pm\neq 1$, then the characteristic exponent is of the form $\psi(\xi)=-i\mu\xi+\psi^0(\xi)$,
where
\bbe\label{KBLnupnumneq01}
\psi^0(\xi)=c_+\Ga(-\nu_+)((-\lm)^{\nu_+}-(-\lm-i\xi)^{\nu_+})+c_-\Ga(-\nu_-)(\lp^{\nu_-}-(\lp+i\xi)^{\nu_-}).
\ee
Formulas in the case $\nu_\pm=0,1$ are different, and the log-factor appear. See \cite{genBS,KoBoL,NG-MBS,EfficientAmenable}.
The asymptotic expansion \eq{eq:as_exp_psi} is valid but the condition $d_0>0$ is satisfied only if $\nu_+=\nu_-$ and
$c_+=c_-$. Note that these two conditions were imposed in the series of examples in \cite{genBS,NG-MBS};
later, the authors of \cite{CGMY} renamed this subclass CGMY-model, and the labels for the parameters of the model were changed.

 In KoBoL model with $\nu_\pm =1$ and $c_+=c_-$, condition \eq{eq:as_exp_psi} with $d_0>0$ holds as well  \cite{NG-MBS,EfficientAmenable}.
}
\end{example}

\begin{example}\label{ex: Phi_asymp_Phi_NTS}{\rm
Condition \eq{eq:as_exp_psi} with $d_0>0$ holds for Normal inverse Gaussian (NIG) processes and the generalization: Normal Tempered Stable (NTS) processes, constructed
 in \cite{B-N,B-N-L}, respectively. The characteristic exponent is given by
 \begin{equation}\label{NTS2}
\psi^0(\xi)=\de[(\al^2-(\be+i\xi)^2)^{\nu/2}-(\al^2-\be^2)^{\nu/2}],
\end{equation}
where $\nu\in (0,2)$, $\de>0$, $|\be|<\al$; NIG obtains with $\nu=1$.
}\end{example}

\begin{example}\label{ex: Phi_asymp_Phi_mix_1}{\rm ($Y$ is an increment of a mixture of L\'evy processes). Let $\psi^k, k=1,2,\ldots, N,$ 
admit the asymptotic expansion
\eq{eq:as_exp_psi} with $\nu^1_0\ge \nu^2_0\ge \ldots \ge \nu^N_0$, and let $\psi^1$ satisfy 
 Condition ($\psi,\infty$). Then, 
under additional evident conditions  on  $d^k_j, k=2,\ldots, N, j\ge 0$ (e.g., all $\psi^k$ satisfy 
 Condition ($\psi,\infty$)), $\psi=\sum_{j=1}^N\psi^k$ satisfies Condition ($\psi,\infty$).

}
\end{example} 
\begin{example}\label{ex: Phi_asymp_Phi_mix_2}{\rm (Cpdf of $Y$ is a weighted sum of cpdfs).
Let the following conditions hold: 
\begin{enumerate}[(1)]
\item
 $\Phi=a_0\Phi^0+a_1\Phi^1$, where $a_0, a_1>0$, $a_0+a_1=1$, and $\Phi_k, k=0,1$,
 are the characteristic functions of a random variables on $\bR$;
 \item
 $\Phi^k, k=0,1$, admit analytic continuation to $\bC\setminus i\bR$;
 \item $\psi^0(\xi)=-\ln \Phi^0(\xi)$ is well-defined on $\bC\setminus i\bR$ and admits the asymptotic expansion
\eq{eq:as_exp_psi};
\item
$\Phi^1(\xi)/\Phi^0(\xi)$ is well-defined on $\bC\setminus i\bR$, assumes values in $\bC\setminus (-\infty,0]$, and decays faster than $|\xi|^{-\ka}$ 
as $\xi\to \infty$, where $\ka>0$.
\end{enumerate}
Then $\psi=-\ln \Phi$ admits the asymptotic expansion \eq{eq:as_exp_psi}.
}
\end{example}

\begin{example}\label{ex: Phi_asymp_Phi_VGP }{\rm In the popular Variance Gamma (VG) model introduced to Finance in \cite{MM91}, 
\begin{equation}\label{VGPexp}
\psi^0(\xi)=c[\ln(\al^2-(\be+i\xi)^2)-\ln (\al^2-\be^2)],
\end{equation}
where $\al>|\be|\ge 0$, $c>0$. The leading term in the analog of the asymptotic expansion \eq{eq:as_exp_psi}
is $c\ln\xi$. To consider the VG model, the constructions
in the paper need certain modifications.
}
\end{example}
Condition $\mu=0$ if $\nu_0<1$ excludes finite variation processes with non-zero drift and essentially asymmetric KoBoL 
processes ($\nu_+\neq \nu_-$ or $c_+\neq c_-$) but  allows for $\lp\neq -\lm$ ({\em essentially symmetric KoBoL}). The condition $d_0>0$ allows for mixtures of an essentially symmetric KoBoL
and asymmetric KoBoLs of lower orders.

\vskip0.1cm
\noindent
{\sc Notation.} 
Denote by $j_0$ the smallest $j>0$ such that 
 that $d_j>0$ for all $j< j_0$ and $\Im d_{j_0}\neq 0$. If such $j_0$ does not exist, set $j_0=N$.
 Note that $j_0\neq 1$, and
 define
 \[
 \barnu=\begin{cases}
 0, & \mu=0\ \text{and}\ \nu_{j_0}\le 0\\
 1, & \mu\neq 0\ \mathrm{and}\ \nu_{j_0}< 1\\
 \nu_{j_0}, & \nu_{j_0}> 1 \ \mathrm{or}\  \nu_{j_0}\in  (0,1) \  \mathrm{and}\ \mu=0.
 \end{cases}
 \]
 We identify $\bC_\xi$ with $\bR_x\times\bR_y$, and 
introduce the vector field 
 $\bV=(\dd_y\Im\psi(\xi), -\dd_x\Im\psi(\xi))$ on $\bC_\xi\setminus i\bR$.
\begin{defin}  $\cT$ 
denotes the set of trajectories $\cL$ of $\bV$ such that  $\Im\psi(\eta)=\de\neq 0, \eta \in \cL$.
If, in addition,
  $\cL$ is smooth, $\cL \subset \{\Re\xi>0\}$,  and $\cL\to \infty$, we write $\cL\in \cT_0$.
  \end{defin}
 Since $d_0>0$, it follows from  
    \eq{eq:as_exp_psi}, that if $\cL\in \cT_0$ and $(\cL\ni) x+iy\to \infty$, then $x\to+\infty$.

 \vskip0.1cm
 \noindent
 {\sc Condition $(\bV,\infty)$.} For any $-\be\in (\mum,\mup)$, $\eps>0$,  and $\de_0>0$, there exist infinitely many pairs $(\xi,\de)$
 in $U(\eps,\de_0)=\{\Re\xi>0, |\xi+i\be|<\eps, 0<|\de|<\de_0\}$
   such that $\cL$ starting at $\xi$ belongs to $\cT_0$ and satisfies $\Im\psi(\eta)=\de, \eta\in \cL$.
  
 We verify Condition $(\bV,\infty)$ (in fact, a stronger property) for SL-processes. 

 \begin{lem}\label{lem:vect field} Let $X$ be an SL-process, and let \eq{eq:as_exp_psi} hold. 
Then for almost all $\de$, any $\cL\in \cT$ satisfying $\Im\psi(\xi)=\de, \xi\in \cL$, 
belongs to $\cT_0$.
\end{lem}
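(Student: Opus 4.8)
The plan is to analyze the trajectories of $\bV=(\dd_y\Im\psi,-\dd_x\Im\psi)$ using the structure of the Stieltjes representation \eq{eq:sSLrepr} together with the asymptotic expansion \eq{eq:as_exp_psi}. Since $\bV$ is the Hamiltonian vector field of $\Im\psi$, every trajectory $\cL$ of $\bV$ lies on a level set $\{\Im\psi=\de\}$, which is exactly the defining condition for membership in $\cT$; thus the content of the lemma is that, for almost all $\de$, such a trajectory is (i) smooth, (ii) eventually contained in the right half-plane $\{\Re\xi>0\}$, and (iii) escapes to infinity. Smoothness is a consequence of the fact that $\bV$ is real-analytic on $\bC_\xi\setminus i\bR$ and that a trajectory can fail to be a smooth $1$-manifold only near an equilibrium, i.e.\ a point where $\nabla\Im\psi=0$; by Sard's theorem applied to $\Im\psi$ (restricted to each of the two half-planes), for almost every $\de$ the level set $\{\Im\psi=\de\}$ contains no critical point, so on that full-measure set of $\de$ all trajectories are smooth embedded curves.

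Next I would show that for almost all $\de$ a trajectory with $\Im\psi\equiv\de\neq0$ cannot stay in a bounded region: a bounded trajectory of a real-analytic planar vector field must either be a periodic orbit or accumulate on equilibria (Poincar\'e--Bendixson), but along $\bV$ the function $\Im\psi$ is constant, and one checks from \eq{eq:sSLrepr}/\eq{eq:as_exp_psi} that $\Im\psi$ has no non-constant periodic level curves in a half-plane (equivalently, the only compact components of level sets are isolated critical points, again excluded for a.e.\ $\de$ by Sard). Hence for a.e.\ $\de$ every $\cL\in\cT$ with that value of $\de$ is unbounded, i.e.\ $\cL\to\infty$. The remaining point is to pin down \emph{where} $\cL$ goes: near $i\bR$ the function $\psi$ blows up (the cuts $i(-\infty,\mum]\cup i[\mup,+\infty)$ are singularities), and on the real axis $\Im\psi=0\neq\de$, so $\cL$ cannot exit through $\bR$; and from \eq{eq:as_exp_psi}, on any cone $\cC_\ga$ one has $\Im\psi(\xi)\sim \Im(d_{j_0}\xi^{\nu_{j_0}})$ (after subtracting the real leading terms $d_j\xi^{\nu_j}$, $j<j_0$, and the linear term $-\mu\xi$ whose imaginary part is controlled), which forces $|\xi|\to\infty$ along the curve rather than $\xi$ approaching a finite boundary point. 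Combining, for a.e.\ $\de$, $\cL$ is smooth, unbounded, avoids $\overline{i\bR}\cup\bR$ on the relevant side, and since $d_0>0$ the leading real term $d_0\xi^{\nu_0}$ pushes the escaping branch into $\{\Re\xi>0\}$ with $\Re\xi\to+\infty$ (this last implication is precisely the remark made just before Condition $(\bV,\infty)$ in the text), so $\cL\in\cT_0$.

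I would organize the write-up as: (1) reduce to the right half-plane $\{\Re\xi>0\}$ by symmetry $\psi(-\bar\xi)=\overline{\psi(\xi)}$ and treat that half-plane; (2) invoke Sard on $\Im\psi|_{\{\Re\xi>0\}}$ to discard a null set of $\de$, getting smoothness and absence of critical points on the level set; (3) use the Stieltjes structure \eq{eq:sSLrepr} to describe the behavior of $\Im\psi$ near the cut $i[\mup,+\infty)$ and near $i(0,\mup)$ — here the sign and monotonicity properties of the Stieltjes transform $ST(\cG^0_\pm)$ are the key analytic input, showing $\Im\psi$ cannot have a bounded non-critical level component and that level curves with $\de\neq0$ stay away from $\bR$; (4) use \eq{eq:as_exp_psi} with $d_0>0$ to conclude the escaping branch lies eventually in $\cC_\ga$ for every $\ga$, hence in $\{\Re\xi>0\}$ with $\Re\xi\to+\infty$, giving $\cL\in\cT_0$. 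The main obstacle I anticipate is step (3): controlling $\Im\psi$ globally in the half-plane — in particular ruling out bounded (periodic-type) level curves and showing level curves genuinely run from a neighborhood of the cut out to infinity — requires exploiting the Stieltjes-measure representation rather than just the asymptotics \eq{eq:as_exp_psi}, and this is exactly why the lemma is stated for SL-processes and why the paper flags the ``general definition in terms of Stieltjes measures'' as the crucial ingredient. A secondary technical nuisance is the low-order/finite-variation regime ($\nu_0<1$, where $\mu=0$ is forced) and the bookkeeping around $j_0$ and $\barnu$, but these are handled by the case split already built into the definition of $\barnu$.
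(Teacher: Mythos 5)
Your framing of the easy parts is fine (Sard to discard critical values, harmonicity/maximum principle to exclude compact level components, the asymptotics \eq{eq:as_exp_psi} with $d_0>0$ to control the escaping branch), but the load-bearing step — showing that a trajectory with $\Im\psi\equiv\de\neq 0$ cannot approach the imaginary axis — is not actually proved, and the justification you offer for it is wrong. You assert that ``near $i\bR$ the function $\psi$ blows up'' along the cuts; for SL-processes this is false in general: the Stieltjes transform of an absolutely continuous measure stays bounded up to the cut, and only its imaginary part jumps (Sokhotski--Plemelj). For example, for KoBoL with $\nu_\pm\in(0,2)$ the terms $(-\lm-i\xi)^{\nu_+}$, $(\lp+i\xi)^{\nu_-}$ are bounded near the cuts. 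Consequently the one-sided boundary values of $\Im\psi$ on the cuts are finite and, for small $|\de|$, there are points of the cut where they equal $\de$, so no continuity or ``level-value'' argument can prevent a trajectory of level $\de$ from terminating there; this is exactly the scenario the lemma has to exclude. Your auxiliary remark that ``on the real axis $\Im\psi=0\neq\de$'' is also off target: the boundary of the domain of $\bV$ is $i\bR$, not $\bR$, and for non-symmetric processes $\Im\psi$ need not vanish on $\bR$ anyway. Note too that without the axis-repulsion you cannot even invoke Poincar\'e--Bendixson as you do, since the trajectory could leave every compact subset of the half-plane by accumulating on the cut rather than by going to infinity. You flagged this step yourself as the main obstacle requiring the Stieltjes structure, but flagging it is not supplying it, so the proposal has a genuine gap precisely where the lemma has content.

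For comparison, the paper closes this hole by a local computation at the cut: after noting that $\psi(iy)$ is real at points of $i\bR$ where $\psi$ is analytic, and that the atoms of $\cG^0_\pm$ and the endpoints of intervals of positive density are at most countable, it reduces to points $iy_0$ near which, say, $\cG^0_-$ has a positive density. Differentiating the representation \eq{eq:sSLrepr} in $y$ and computing the Plemelj-type jump of the Stieltjes integral across such an interval yields the sign bounds \eq{der_psi_y}--\eq{der_psi_y_m}, i.e.\ $\mathrm{sign}(x)\,\dd_y\Im\psi(x+iy_0)\ge c\,C(y_0)>0$ for $0<|x|<x_0$, first for a locally constant density and then for a general positive density by an approximation argument ($\psi_n\to\psi$ on compacts of $\bC\setminus i\bR$). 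These segments through the cut act as barriers that no trajectory with $\Im\psi\neq 0$ can pass, so trajectories are bounded away from $i\bR$; only then do the asymptotics \eq{eq:as_exp_psi} (countably many zeros of $\psi'$ off the axis, $\bV$ bounded away from $0$ on compacts of the trajectory) deliver $\cL\in\cT_0$ for almost all $\de$. If you want to salvage your write-up, step (3) must be replaced by an argument of this kind; the Sard and maximum-principle observations can stay as harmless variants of the paper's countability argument.
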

 The proof is in Section \ref{proof_lem:vect field}.

  \subsection{Main Theorem}
   In Section \ref{s:Euro_symm}, the outer contour of integration $\{|q|=r\}$ is deformed (see Theorem \ref{thm:Euro_symm}) keeping the same
   line of integration in the inner contour (integration w.r.t. $\xi$).
The inner contour  is deformed later, to improve the efficiency of the numerical algorithm for
the evaluation of the individual terms in the simplified trapezoid rule (see Section \ref{ss:eff_real}). In the asymmetric case, we can justify the deformation of the contour in the $Z$-inversion formula only after the line of integration w.r.t. $\xi$ is deformed into a contour $\cL$ of a special form. 
After that, we make the corresponding sinh-change of variables, apply the simplified trapezoid rule, and calculate each term making the sinh-change of variables in the integrals w.r.t. $\xi$ as in the symmetric case.

For $d>0$, $c,C>0$, $\al\in (0,1)$ and $\be$ in Condition ($G;\be$), denote $\cL_{up}(d,C,\al)=\{\xi\in \bC\ |\ \Im\xi=-\be+d+C|\Re\xi|^\al\}$
and  $\cL_{do}(d,C,\al)=\{\xi\ |\ \Im\xi=-\be-d-C|\Re\xi|^\al\}$.
  \begin{thm}\label{thm:Euro-non-symm0} Let Conditions  $(\psi,\infty)$,  $(\bV,\infty)$, $(G;\be)$, $(\hG;a)$ hold, and either $\nu_0>(\barnu+1)/2$ or $\nu_0\in (0,1)$ and $\barnu=0$. Then, for any $\ga\in (0,\pi/2)$ and $r\in (0,1/\Phi(-i\be))$, there exist $d>0$, $c,C>0$, 
  $\al\in (0,1)$, $\ka>0$ and a smooth curve $\cL$ sandwiched between  $\cL_{up}(d,C,\al)$ and $\cL_{do}(d,C,\al)$ such that 
\begin{enumerate}[(a)]
\item
$\cL$ admits a parametrization $\bR\ni y\mapsto \chi_\cL(y)\in \cL$ such that
\begin{enumerate}[(i)]
\item
$\chi_\cL$ admits analytic continuation to a strip $S_{(-d_\cL,d_\cL)}$, where $d_\cL\in (0, d)$;
\item
for any $\xi\in\chi_\cL(S_{(-d_\cL,d_\cL)})$, $\Phi(\xi)\in \cC_{\ga/2}$;
\item
as $x\to\pm\infty$,  $\chi_\cL(x+ia)=x+O(|x|^\al)$ and
$\dd_x\chi_\cL(x+ia)=1+O(|x|^{-\ka})$, uniformly in $a\in (-d_\cL,d_\cL)$;
\end{enumerate}
\item
for $(q,\xi)\in ((-\cC_{\pi-\ga})\cup\cD(0,r))\times \chi(S_{(-d_\cL,d_\cL)})$, $1-q\Phi(\xi)\not\in (-\infty,0]$, and $|1-q\Phi(\xi)|\ge c$; 
\item
\bbe\label{eq:Euro_non_symm_10}
V_{eu}(G,q_0;n,x)=\frac{q_0^n}{2\pi i}\int_{|q|=r}\frac{dq}{q^{n}}\frac{1}{2\pi}\int_{\cL}
e^{i(x-a)\xi}\frac{\Phi(\xi)\hG_0(\xi)}{1-q\Phi(\xi)}d\xi;
\ee
\item
if $n>1$, then, for any  $\om_\ell\in (\ga/2,\pi/4)$ and $r\in (0, 1/\Phi(-\be))$,  there exist $\sg_\ell\in \bR$ and $b_\ell>0$ such that
$\sg_\ell-b_\ell\sin \om_\ell=r$ and
\bbe\label{eq:Euro_non_symm_20}
V_{eu}(G,q_0;n,x)=\frac{q_0^n}{2\pi i}\int_{\cL_{L;\sg_\ell,b_\ell,\om_\ell}}\frac{dq}{q^{n}}\frac{1}{2\pi}\int_{\cL}
e^{i(x-a)\xi}\frac{\Phi(\xi)\hG_0(\xi)}{1-q\Phi(\xi)}d\xi.
\ee
\end{enumerate}
\end{thm}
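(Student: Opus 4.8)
The plan is to establish the four items of Theorem~\ref{thm:Euro-non-symm0} in the order (a), (b), (c), (d), treating (a) as the real heart of the matter and (d) as a routine consequence of the earlier sections. First I would construct the curve $\cL$. The idea is that, under Condition $(\bV,\infty)$ and Lemma~\ref{lem:vect field}, for almost every sufficiently small $\de\ne 0$ the level set $\{\Im\psi(\xi)=\de\}$ contains a trajectory $\cL\in\cT_0$: a smooth curve lying in $\{\Re\xi>0\}$, going to infinity, and passing close to $-i\be$. By symmetry (using $\psi(-\bar\xi)=\overline{\psi(\xi)}$) the reflected curve handles $\{\Re\xi<0\}$; one then glues the two pieces through a neighbourhood of $-i\be$ to obtain a curve crossing the imaginary axis near $-i\be$. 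The asymptotic expansion \eqref{eq:as_exp_psi} controls the shape: since the leading behaviour of $\Im\psi$ along such a trajectory is governed by the term $d_0\xi^{\nu_0}$ (real, positive) plus the first genuinely complex term $d_{j_0}\xi^{\nu_{j_0}}$, the trajectory satisfies $\Im\chi_\cL(x)+\be=O(|x|^{\al})$ with $\al=\barnu-\nu_0+1<1$ (this is exactly where the hypothesis $\nu_0>(\barnu+1)/2$, equivalently $\al<1/2$ or at least $\al<1$, is used), giving the sandwiching between $\cL_{up}$ and $\cL_{do}$ and claim (a)(iii). The analytic continuation of $\chi_\cL$ to a thin strip $S_{(-d_\cL,d_\cL)}$ in (a)(i) comes from the implicit function theorem applied to $\Im\psi(\xi)=\de$, since $\nabla\Im\psi\ne 0$ off $i\bR$ by \eqref{eq:as_exp_psi}; and (a)(ii), the statement $\Phi(\xi)\in\cC_{\ga/2}$ on the thickened curve, follows because on $\cL$ we have $|\arg\Phi(\xi)|=|\Im\psi(\xi)|=|\de|$ small, and continuity lets us keep $|\arg\Phi|<\ga/2$ on a sufficiently thin strip, shrinking $d_\cL$ if needed and also shrinking $|\de|$.

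Item (b) is then a short argument: for $q\in-\cC_{\pi-\ga}$ and $\Phi(\xi)\in\cC_{\ga/2}$ we have $q\Phi(\xi)\in-\cC_{\pi-\ga/2}$, which is disjoint from $(-\infty,0]$, so $1-q\Phi(\xi)\notin(-\infty,0]$; for $|q|<r<1/\Phi(-i\be)$ one uses that $|\Phi(\xi)|$ is bounded on $\chi_\cL(S_{(-d_\cL,d_\cL)})$ (from \eqref{eq:as_exp_psi}, $|\Phi|\to 1$... more precisely $|\Phi(\xi)|$ stays bounded, in fact $\le\Phi(-i\be)$ up to a controllable factor along the nearly-horizontal curve) so $|q\Phi(\xi)|$ is bounded away from $1$. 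The uniform lower bound $|1-q\Phi(\xi)|\ge c$ on the compact-in-the-relevant-sense region is obtained by combining these two cases with a compactness argument near the ``corner'' $q$ close to the positive real axis, $\xi$ close to $-i\be$, where one uses that $\Phi(-i\be)r<1$ strictly.

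Item (c) is the contour-shift in the inner integral. Starting from the symmetric-case representation reasoning (the European price is $V_n$ with the inner integral over $\{\Im\xi=-\be\}$, as in Theorem~\ref{thm:Euro_symm} but now without the symmetry hypothesis, using only that $\Phi(\xi)>0$ is not available — instead using (b) to keep the denominator away from $0$), I would deform $\{\Im\xi=-\be\}$ to $\cL$. The deformation is justified by Cauchy's theorem provided the integrand is analytic in the region swept out and the contributions from the far vertical segments vanish; analyticity of $\Phi$ holds on $\bC\setminus i((-\infty,\mum]\cup[\mup,+\infty))$ by $(\psi,\infty)$, analyticity of $\hG_0$ on $\bC\setminus i\bR$ by $(\hG;a)$, the denominator is nonvanishing by (b), and the decay needed to kill the end segments comes from $|\hG_0(\xi)|\le C|\xi|^{-1}$ together with the polynomial/faster decay of $\Phi$ from \eqref{eq:as_exp_psi} and the factor $e^{i(x-a)\xi}$ being bounded on the strip between $\{\Im\xi=-\be\}$ and $\cL$ (both being $O(|\Re\xi|^{\al})$-close to horizontal). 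This yields \eqref{eq:Euro_non_symm_10}.

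Finally, item (d): with the inner contour fixed at $\cL$, the map $q\mapsto \frac{1}{2\pi}\int_\cL e^{i(x-a)\xi}\frac{\Phi(\xi)\hG_0(\xi)}{1-q\Phi(\xi)}d\xi$ is, by (b), analytic in $q$ on $(-\cC_{\pi-\ga})\cup\cD(0,r)$ and bounded there (indeed $O(1)$ uniformly, using $|1-q\Phi(\xi)|\ge c$ and the integrable majorant just described), so the function $\tV(q)$ defined by this inner integral satisfies Condition $Z$-SINH$(\ga)$ with $a_\tV=0$. Then Lemma~\ref{lem:Z-SINH} applies: for any $\om_\ell\in(\ga/2,\pi/4)$ and $r\in(0,1/\Phi(-i\be))$ there are $\sg_\ell,b_\ell$ with $\sg_\ell-b_\ell\sin\om_\ell=r$ so that the circle $\{|q|=r\}$ deforms to $\cL_{L;\sg_\ell,b_\ell,\om_\ell}$, giving \eqref{eq:Euro_non_symm_20}. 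The main obstacle is (a): producing a single globally smooth curve $\cL$ with the prescribed horizontal asymptotics and with $\arg\Phi$ uniformly small on a genuine strip around it — in particular, verifying that the trajectory of $\bV$ supplied by Lemma~\ref{lem:vect field} can be chosen through a point arbitrarily close to $-i\be$ on both sides of $i\bR$ and that the gluing near $-i\be$ can be done while preserving analyticity and the $\cC_{\ga/2}$-image condition; the asymptotic bookkeeping that extracts $\al$ and $\ka$ from \eqref{eq:as_exp_psi} (and the role of the dichotomy $\nu_0>(\barnu+1)/2$ vs.\ $\nu_0\in(0,1),\barnu=0$) is the technical core there.
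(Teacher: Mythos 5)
Your overall route coincides with the paper's: build $\cL$ from a trajectory of $\bV$ on a level set $\{\Im\psi=\de\}$ supplied by Condition $(\bV,\infty)$/Lemma~\ref{lem:vect field}, extend it through a neighbourhood of $-i\be$ and reflect in $i\bR$, use $|\arg\Phi|<\de^*$ on a thin strip around $\cL$ to get (a)--(b), deform the inner contour to get \eqref{eq:Euro_non_symm_10}, and then deform the outer circle as in Lemma~\ref{lem:Z-SINH} to get \eqref{eq:Euro_non_symm_20}. However, there is a genuine gap in your justification of the inner deformation and of the uniform bound on the inner integral. You assert that $e^{i(x-a)\xi}$ is \emph{bounded} on the region between $\{\Im\xi=-\be\}$ and $\cL$ because both are ``$O(|\Re\xi|^{\al})$-close to horizontal''. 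That is false in general: along $\cL$ one has $\Im\xi\sim p(\de)|\Re\xi|^{\barnu+1-\nu_0}$ by \eqref{eq:as_y_de}, which is unbounded whenever $\barnu+1>\nu_0$ (e.g.\ processes of order $\nu_0<2$ with drift), so $|e^{i(x-a)\xi}|$ can grow like $\exp\bigl(C|\Re\xi|^{\barnu+1-\nu_0}\bigr)$. The paper's argument is that this growth is dominated by $|\Phi(\xi)|\le Ce^{-d|\Re\xi|^{\nu_0}}$ ($d<d_0$) precisely because $\barnu+1-\nu_0<\nu_0$, i.e.\ $\nu_0>(\barnu+1)/2$; this, and not your claim ``$\al<1/2$ or at least $\al<1$'' (which would be $\nu_0>\barnu+1/2$, resp.\ $\nu_0>\barnu$), is where the dichotomy hypothesis enters. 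Your proof never invokes the hypothesis at the place it is actually needed, and the same unproven boundedness claim is what underlies your assertion in (d) that the inner integral is $O(1)$ uniformly in $q$.

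Relatedly, you do not treat the second branch of the dichotomy, $\nu_0\in(0,1)$ and $\barnu=0$, at all. There $\barnu+1-\nu_0=1-\nu_0$ may be $\ge\nu_0$, so the domination argument fails for the trajectory-based curve, and the paper replaces $\cL(\de;u)$ by the flattened contour $\cL^*(x^*;\de,u)$ (constant imaginary part for $|\Re\xi|\ge x^*$, justified by \eqref{eq:asymdj}--\eqref{eq:asymdj0}), on which the oscillating factor really is bounded; see Theorem~\ref{thm:Euro-non-symm2}. Without this modification your construction does not cover that case, and with it your ``bounded oscillating factor'' claim becomes true only for the modified contour, not for the one you build. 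Fixing the proof therefore requires (i) replacing the boundedness claim by the growth-versus-decay comparison using $\nu_0>(\barnu+1)/2$, and (ii) adding the flattening construction for the case $\nu_0\in(0,1)$, $\barnu=0$.
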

The proof of Theorem \ref{thm:Euro-non-symm0} and an explicit construction of $\cL$ occupy the rest of the section. 

\begin{rem}{\rm To choose $\sg_\ell,b_\ell,\om_\ell$ for an efficient numerical evaluation, we take a small $\ga>0$, and  use the prescription in 
Section \ref{param_choice_Z_SINH}  to choose the parameters of the deformation of the contour $\{|q|=r\}$. 
To derive a bound for the Hardy norm, hence, choose $\ze_\ell$, we use the bounds $|\Phi(\xi)|\le Ce^{-(d_0/2)/|\xi|^{\nu_0}}$ and  $|1-q\Phi(\xi)|\ge c$
 to find $C$ such that  
the inner integral
\bbe\label{Inj_Def}
I(q):=\frac{1}{2\pi}\int_{\cL}
e^{i(x-a)\xi}\frac{\Phi(\xi)\hG_0(\xi)}{1-q\Phi(\xi)}d\xi.
\ee
admits a bound
$|I(q)|\le C$ for $q\in (-\cC_{\pi-\ga})\cup\cD(0,r)$.
The truncation parameter $\La=N_\ell \ze_\ell$ is chosen  as in Section \ref{param_choice_Z_SINH}, using the bound
for $|I(q)|$.
After that,  
we make the change of variables in the outer integral on the RHS of \eq{eq:Euro_non_symm_20},  and apply the simplified trapezoid rule w.r.t. $y$ as in Section \ref{s:Euro_symm}. Finally, for each $q_j=q(y_j)$ appearing in the simplified trapezoid rule,
we deform the contour $\cL(\de; u)$ into an appropriate contour of the form $\cL_{\om_1, b,\om}$. For the choice of
$\om_1, b,\om$, see Remark \ref{rem:sinh_par_choice}.

}\end{rem}

 \subsection{Deformations of the contour in the $\xi$-space
}\label{sss:constr_def}

First, we study the asymptotics of the trajectories at infinity.
By a slight abuse of notation, we write $\cL_\de$ to indicate the value of $\Im\psi$ along $\cL\in\cT_0$.
We do not claim that, given $\de\neq 0$, $\cL_\de$ is unique. The general properties formulated and proved below are valid for
any choice of $\cL_\de$. 

The following lemma is immediate from \eq{eq:as_exp_psi}.
\begin{lem}\label{param1} Let Conditions $(\psi,\infty)$ and $(\bV,\infty)$ hold. 
Then,
for any bounded interval $[\de_-,\de_+]\subset\bR$, there exist $R>0$ such that, for any triple $(\de, \cL_\de, x)\in 
[\de_-,\de_+]\times  \cT_0\times [R,+\infty)$,
 there exists a unique $y(\cL_\de; x)\in \bR$ 
such that $x+iy(\cL_\de;x)\in \cL_\de$.
\end{lem}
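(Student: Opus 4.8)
\textbf{Proof proposal for Lemma \ref{param1}.}

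The plan is to read off the claim directly from the asymptotic expansion \eq{eq:as_exp_psi}, using the fact that curves in $\cT_0$ are level sets of $\Im\psi$ living in the right half-plane and escaping to infinity. First I would fix the bounded interval $[\de_-,\de_+]$ and recall that for any $\cL\in\cT_0$ we have $\cL\subset\{\Re\xi>0\}$, $\cL\to\infty$, and (as noted right after the definition of $\cT_0$, using $d_0>0$) along $\cL$ the real part $x=\Re\xi\to+\infty$. Hence it suffices to understand, for large $\Re\xi$, the shape of the level set $\{\Im\psi(\xi)=\de\}$ in a cone $\cC_\ga$ about the positive real axis.

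Next I would substitute the expansion $\psi(\xi)=-i\mu\xi+d_0\xi^{\nu_0}+(\text{lower order})+O(|\xi|^{-\nu_N})$ and take imaginary parts. Writing $\xi=x+iy$ with $x$ large and $y$ in a bounded range (which is where the relevant portion of $\cL_\de$ lives, since $\cL_\de$ is sandwiched near the real axis by the earlier constructions — more precisely, near the line $\Im\xi=-\be$ in the eventual application, but here we only need that $y$ stays bounded on the compact-in-$\de$ family), the dominant $x$-dependence of $\Im\psi(x+iy)$ is governed by $\Im(d_0(x+iy)^{\nu_0})-\mu x$, plus terms of strictly lower order in $x$. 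The key computation is $\partial_y\Im\psi(x+iy)=\partial_y\Im\psi=\Re\psi'(\xi)$ up to sign conventions; concretely $\partial_y\,\Im\psi(x+iy)=\Re\big(\psi'(x+iy)\big)$, and from \eq{eq:as_exp_psi} one gets $\psi'(\xi)=-i\mu+\nu_0 d_0\xi^{\nu_0-1}+O(|\xi|^{\nu_1-1})$, so $\Re\psi'(x+iy)=\nu_0(\Re d_0)x^{\nu_0-1}(1+o(1))>0$ for $x$ large (when $\nu_0>0$, $d_0>0$); in the degenerate finite-variation case $\nu_0<1$ one has $\mu=0$ and the same positivity holds since $x^{\nu_0-1}\to 0$ but stays positive. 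Thus, for each fixed large $x=R'$ and each $\de\in[\de_-,\de_+]$, the function $y\mapsto\Im\psi(R'+iy)$ is strictly monotone on the relevant bounded $y$-window and its range covers $[\de_-,\de_+]$ once $R'$ is large enough, by the intermediate value theorem and the explicit size of $\Im\psi$. This gives existence and uniqueness of $y(\cL_\de;x)$ with $x+iy(\cL_\de;x)\in\cL_\de$ — i.e. it is the unique $y$ with $\Im\psi(x+iy)=\de$ in that window, and since $\cL_\de$ is exactly the level set $\{\Im\psi=\de\}$ through the relevant starting point, the point $x+iy$ lies on $\cL_\de$.

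Finally I would make the choice of $R$ uniform: all the error terms in \eq{eq:as_exp_psi} and in the expansion of $\psi'$ are $O(|\xi|^{\text{(something)}})$ with constants depending only on $\ga$, and the monotonicity threshold and the range-covering threshold depend only on $d_0,\nu_0,\mu,[\de_-,\de_+]$ and the $O$-constants, so a single $R=R([\de_-,\de_+])$ works for all triples $(\de,\cL_\de,x)$. The main obstacle — really the only subtle point — is justifying that the portion of $\cL_\de$ over $[R,+\infty)$ stays in the region where the asymptotic expansion \eq{eq:as_exp_psi} is valid (i.e. inside some cone $\cC_\ga$, equivalently that $y(\cL_\de;x)$ does not grow faster than $|x|$), and that there is not a second branch of the level set sneaking back; both follow from $\cL_\de\in\cT_0$ together with $\cL_\de\to\infty$ and $\Re\xi\to+\infty$ along $\cL_\de$ (so eventually $\cL_\de$ is trapped in $\cC_\ga$), after which the strict monotonicity of $y\mapsto\Im\psi(x+iy)$ established above rules out a second intersection. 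I would remark that this lemma is purely a statement about the behavior at infinity and is exactly the technical input needed for the more refined sandwiching estimates in part (a)(iii) of Theorem \ref{thm:Euro-non-symm0}.
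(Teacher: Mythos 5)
Your argument is essentially the paper's: the paper dismisses this lemma as "immediate from \eq{eq:as_exp_psi}", and your write-up supplies exactly the intended details — take imaginary parts of the expansion, observe $\dd_y\Im\psi(x+iy)=\Re\psi'(x+iy)\sim \nu_0 d_0 x^{\nu_0-1}>0$ for $\xi$ in a cone about the positive real axis, and conclude uniqueness by monotonicity and existence because $\Re\xi\to+\infty$ along any $\cL_\de\in\cT_0$; this is also consistent with the computations the authors carry out in the subsequent Lemma \ref{lem:asymp_psi_SL}. Two small corrections to your phrasing: the relevant portion of $\cL_\de$ need not have bounded $\Im\xi$ (it behaves like $p(\de)x^{\barnu+1-\nu_0}$, which is unbounded when $\barnu+1>\nu_0$); what you need, and what you in fact invoke at the end, is only sublinear growth so the curve is eventually trapped in a cone $\cC_\ga$ where \eq{eq:as_exp_psi} applies. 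Also, existence should be read off the trajectory itself (connectedness plus $\Re\xi\to+\infty$ along $\cL_\de$ forces it to cross every vertical line $\{\Re\xi=x\}$ for $x$ large), rather than from the intermediate value theorem in $y$ on the vertical line, since the level set $\{\Im\psi=\de\}$ may have several components and the point so produced need not lie on the given $\cL_\de$; with that rearrangement your proof is complete.
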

It is straightforward to derive from \eq{eq:as_exp_psi} asymptotic formulas for $y(\cL_\de; x)$, and these formulas serve as the
basis for the construction of deformed contours in the formula for the joint cpdf. 

 \begin{lem}\label{lem:asymp_psi_SL}
Let  $\psi$ satisfy Condition $(\psi,\infty)$. Then
\begin{enumerate}[(a)]
\item
there exists $\ka>0$ such that for any $\de\neq 0$
\bbe\label{eq:as_y_de}
y(\cL_\de;x)=p(\de)x^{\barnu+1-\nu_0}+O(x^{\barnu+1-\nu_0-\ka}),\ x\to+\infty,
\ee 
where $p(\de)$ is the same for all trajectories $\cL_\de$:
\bbe\label{eq:def_p_de}
p(\de)=\begin{cases}
 \de/(d_0\nu_0), & \barnu<0 \\
 (\de-\Im d_{j_0})/(d_0\nu_0), & \nu_{j_0}=0 \ \text{and}\ \mu=0\\
 -\Im d_{j_0}/(d_0\nu_0), & \nu_{j_0}\in (0,1) \ \text{and}\ \mu=0 \ \text{or}\  \nu_{j_0}\in (1,2)\\
 \mu/(d_0\nu_0), & \nu_{j_0}<1 \ \text{and}\ \mu\neq 0;
 \end{cases}
\ee
\item
for any $a<b$ there exist $c>0$ and $R>0$ such that if $a<\de_-<\de_+<b$, $\de_j\neq 0$, and $\de_+-\de_-<c$,
then
\bbe\label{eq:dif:y_de}
    y(\cL_{\de_+};x)-y(\cL_{\de_-};x)\ge \frac{\de_+-\de_-}{2d_0\nu_0}x^{1-\nu_0}, \ x\ge R;
\ee
\item 
 \bbe\label{eq:asymp_der_y}
 \dd_x y(\cL_\de;x)=O(x^{-\min\{1,\nu_0-\barnu\}}), \ x\to +\infty.
 \ee
\end{enumerate}
\end{lem}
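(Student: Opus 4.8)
\textbf{Proof plan for Lemma \ref{lem:asymp_psi_SL}.}

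The strategy is to invert the asymptotic expansion \eq{eq:as_exp_psi} for $\Im\psi$ along a trajectory $\cL_\de\in\cT_0$. By definition of $\cT_0$, along $\cL_\de$ we have $\Im\psi(\xi)=\de$ identically, and by Condition $(\psi,\infty)$(3) together with $d_0>0$ we know $\Re\xi\to+\infty$ on $\cL_\de$. So I would write $\xi=x+iy$ with $x\to+\infty$ and $y=y(\cL_\de;x)$, substitute into \eq{eq:as_exp_psi} (valid in any cone $\cC_\ga$, and $\cL_\de$ stays in such a cone since $y=o(x)$ will be verified a posteriori), and extract the imaginary part. The leading contribution to $\Im(\psi(\xi)+i\mu\xi)$ comes from two sources: the term $-\mu y$ from $-i\mu\xi$, and the imaginary parts of $d_j\xi^{\nu_j}$. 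Using $\xi^{\nu_j}=x^{\nu_j}(1+iy/x)^{\nu_j}=x^{\nu_j}(1+i\nu_j y/x+\cdots)$ when $|y|\ll x$, one finds $\Im(d_j\xi^{\nu_j})=x^{\nu_j}\Im d_j+\nu_j\,\Re d_j\, y\,x^{\nu_j-1}+O(y^2 x^{\nu_j-2})+\cdots$. The bookkeeping is organized by the definition of $j_0$ and $\barnu$: for $j<j_0$ the $d_j$ are real and positive, so $\Im(d_j\xi^{\nu_j})$ has no $x^{\nu_j}$ term and only the linear-in-$y$ term $\nu_j d_j y x^{\nu_j-1}$ survives; the first genuinely imaginary coefficient $d_{j_0}$ (if $\nu_{j_0}\ge 0$) contributes a term $x^{\nu_{j_0}}\Im d_{j_0}$, and the drift contributes $-\mu y$. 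Matching the dominant balance against the constant $\de$ in each of the four cases listed in \eq{eq:def_p_de} pins down which term balances $y$: in case $\barnu<0$ the term $\nu_0 d_0 y x^{\nu_0-1}$ must equal $\de+o(1)$, giving $y\sim \de x^{1-\nu_0}/(d_0\nu_0)$, i.e. exponent $\barnu+1-\nu_0=1-\nu_0$ and $p(\de)=\de/(d_0\nu_0)$; when $\nu_{j_0}=0$ and $\mu=0$ the constant term $\Im d_{j_0}$ shifts $\de$, so $\nu_0 d_0 y x^{\nu_0-1}=\de-\Im d_{j_0}+o(1)$; when $\nu_{j_0}\in(0,1)$ and $\mu=0$ the term $x^{\nu_{j_0}}\Im d_{j_0}$ dominates $\de$, forcing $\nu_0 d_0 y x^{\nu_0-1}\sim -\Im d_{j_0}x^{\nu_{j_0}}$, hence $y\sim -\Im d_{j_0}x^{\nu_{j_0}+1-\nu_0}/(d_0\nu_0)$, consistent with $\barnu=\nu_{j_0}$; and when $\mu\neq 0$, $\nu_{j_0}<1$ the $-\mu y$ term is the largest $y$-dependent quantity and must balance $\nu_0 d_0 y x^{\nu_0-1}$... actually one re-reads: here $\barnu=1$, the balance is $-\mu y\sim$ (the $x^{\nu_0}\cdot(\text{imag part})$ or subleading pieces)—more precisely $\Im(d_0\xi^{\nu_0})$ itself, since $d_0>0$ this is $\nu_0 d_0 y x^{\nu_0-1}$ which is $o(y)$, so actually the correct balance is between $-\mu y$ and the leading $\Im$-term among the $d_j$; I would reconcile this carefully with the case $\nu_{j_0}<1,\mu\neq 0$ giving $p(\de)=\mu/(d_0\nu_0)$, which signals that the relevant term is $\Im(d_0\xi^{\nu_0})=\nu_0 d_0 y x^{\nu_0-1}$ balancing $\mu y$... this needs the asymptotics of the full $\psi$, not just $\psi+i\mu\xi$, and the drift cancels internally—so the careful reading of \eq{eq:as_exp_psi} and the sign conventions is exactly where I must be meticulous.

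For part (a), once the dominant balance is identified, I would set up the implicit equation $\de=\Im\psi(x+iy)$ and apply the implicit function theorem / a bootstrap argument: start with the ansatz $y=p(\de)x^{\barnu+1-\nu_0}(1+o(1))$, plug back into the expansion, and show the error term is $O(x^{\barnu+1-\nu_0-\ka})$ for some $\ka>0$ coming from the gap $\nu_0-\nu_1>0$ (or from the $O(|\xi|^{-\nu_N})$ remainder, or from the quadratic-in-$y$ corrections, whichever is largest). The key point that $p(\de)$ is the same for all trajectories $\cL_\de$ with the given $\de$ is automatic from this derivation: the leading-order equation determining $p$ does not see which trajectory we are on, only the value $\de$ and the fixed coefficients $d_0,\nu_0,\mu,d_{j_0},\nu_{j_0}$. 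For part (c), I would differentiate the relation $\Im\psi(x+iy(x))=\de$ in $x$: $\Re(\psi'(x+iy))+\Re(i\psi'(x+iy))\,y'(x)=0$ wait—more cleanly, $\partial_x[\Im\psi(x+iy(x))]=0$ gives $\partial_x\Im\psi+(\partial_y\Im\psi)y'(x)=0$, so $y'(x)=-(\partial_x\Im\psi)/(\partial_y\Im\psi)$; using \eq{eq:as_exp_psi} termwise, $\partial_y\Im\psi\sim \nu_0 d_0 x^{\nu_0-1}$ (or $-\mu$ in the drift-dominated case) and $\partial_x\Im\psi=O(x^{\barnu-1})$ or similar, yielding $y'(x)=O(x^{\barnu-\nu_0})$ in most cases and $O(x^{-1})$ when the drift dominates, i.e. $O(x^{-\min\{1,\nu_0-\barnu\}})$ as claimed; alternatively just differentiate the expansion in (a) directly, but one has to be careful that differentiating an $O(\cdot)$ error is not automatic, so the IFT route is cleaner. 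For part (b), I would subtract: $y(\cL_{\de_+};x)-y(\cL_{\de_-};x)=\int$ of the derivative w.r.t. $\de$, or directly from \eq{eq:as_y_de}, $p(\de_+)-p(\de_-)=(\de_+-\de_-)/(d_0\nu_0)$ in the cases where $p$ depends linearly on $\de$ (the first two cases of \eq{eq:def_p_de}); in the other two cases $p$ is independent of $\de$, so the difference of leading terms vanishes and one needs the next-order term—but wait, the claim \eq{eq:dif:y_de} has $x^{1-\nu_0}$ on the right, which is the $\barnu<0$ exponent, so I suspect part (b) is really only asserted/needed in a regime where the $\de$-dependence is linear, i.e. $\barnu=0$; I would check the intended scope and, restricting to that case, the bound is immediate from \eq{eq:as_y_de} by choosing $R$ large enough that the $O(x^{-\ka})$ correction is absorbed into the factor $1/2$.

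The main obstacle is the case analysis in matching the dominant balance, specifically reconciling the drift term $-\mu y$ with the expansion of $\psi+i\mu\xi$ in the case $\mu\neq 0$, $\nu_{j_0}<1$: here $\barnu=1$ and $p(\de)=\mu/(d_0\nu_0)$, and one has to track that $\Im\psi(x+iy)=\Im(\psi(\xi)+i\mu\xi)-\mu y=\sum_j\Im(d_j\xi^{\nu_j})-\mu y+O(x^{-\nu_N})$, then the balance $\de\approx \nu_0 d_0 y x^{\nu_0-1}-\mu y+\cdots$; since $\nu_0\le 2$ and potentially $\nu_0>1$, the term $\nu_0 d_0 y x^{\nu_0-1}$ dominates $\mu y$, forcing $y\sim \de x^{1-\nu_0}/(d_0\nu_0)$ which would give $p(\de)=\de/(d_0\nu_0)$, contradicting the stated $p(\de)=\mu/(d_0\nu_0)$ unless $\nu_0<1$—but the case $\nu_0<1$, $\mu=0$ is excluded by Condition $(\psi,\infty)$(3). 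So I must have the structure slightly wrong and the resolution is that in this case $x^{\barnu+1-\nu_0}=x^{2-\nu_0}$, and it is the term $\Im(d_0\xi^{\nu_0})$'s contribution at order $x^{\nu_0-2}y^2$, or more likely the product $-\mu y$ against the requirement that along the trajectory $\Re\psi$ also has a prescribed form—honestly, disentangling exactly which term in \eq{eq:as_exp_psi} produces the stated $p(\de)$ in each of the four cases, and verifying the exponent $\barnu+1-\nu_0$ is correct, is the delicate part that requires writing out \eq{eq:as_exp_psi} with full care about real vs imaginary parts of each $d_j$ and the sign of $\mu$; everything else (bootstrap for the error term, differentiation for (c), subtraction for (b)) is routine once the leading term is nailed down.
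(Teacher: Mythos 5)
Your overall route---substitute $\xi=x+iy$ into \eq{eq:as_exp_psi}, Taylor-expand $(x+iy)^{\nu_j}$ in powers of $y/x$, take imaginary parts along $\cL_\de$, and read off the dominant balance, with an implicit-function-theorem argument behind (b) and (c)---is exactly the paper's. However, as written there is a genuine gap in part (a), and you flag it yourself without resolving it. The source is an algebra slip: the drift enters $\Im\psi$ through $\Im(-i\mu\xi)=\Im(-i\mu x+\mu y)=-\mu x$, \emph{not} $-\mu y$. The equation along a trajectory is therefore $\de=\sum_j\Im\bigl(d_j(x+iy)^{\nu_j}\bigr)-\mu x+O(x^{-\nu_N})$, which is precisely \eq{eq:as_Im} in the paper. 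With the correct $-\mu x$, the case you call the main obstacle ($\mu\neq 0$, $\nu_{j_0}<1$) resolves at once: the balance is $d_0\nu_0\,y\,x^{\nu_0-1}\approx \mu x$, so $y\sim \mu x^{2-\nu_0}/(d_0\nu_0)$, i.e. $\barnu=1$ and $p(\de)=\mu/(d_0\nu_0)$, the fourth line of \eq{eq:def_p_de}. Since you explicitly leave this case unreconciled (and your tentative alternatives, e.g. a balance involving $y^2x^{\nu_0-2}$, are not the right mechanism), part (a) is not established in your write-up, even though the missing step is a one-line correction.

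Part (b) is also not right as proposed. You suggest that \eq{eq:dif:y_de} should be restricted to the cases in which $p(\de)$ depends on $\de$, because subtracting the leading terms of \eq{eq:as_y_de} gives nothing otherwise. But the lemma asserts (b) in all cases, and the paper uses it in all cases (it is what yields the separation \eq{eq:dist_cL_dem} between the contours $\cL(\de_\pm,u_\pm)$ needed for the barrier-option bounds). The correct argument---which you mention in passing (integrate the $\de$-derivative) and then abandon---is the paper's: replace $\Im\psi$ by a finite-term modification $\Im\psi_{mod}$, observe that $\dd_y\Im\psi_{mod}(x+iy)\vert_{y=y_{mod}(\de;x)}=d_0\nu_0x^{\nu_0-1}(1+O(x^{-\ka}))$ (the lower-order terms being controlled via $\barnu<\nu_0$; note the drift term $-\mu x$ has zero $y$-derivative), hence by the implicit function theorem $\dd_\de y_{mod}(\de;x)=(d_0\nu_0)^{-1}x^{1-\nu_0}(1+O(x^{-\ka}))$ uniformly for $\de$ in a bounded interval, and integrate over $[\de_-,\de_+]$. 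This gives \eq{eq:dif:y_de} for every $\barnu$, because the $\de$-derivative does not see the (possibly $\de$-independent) leading term $p(\de)x^{\barnu+1-\nu_0}$. Your treatment of (c) by implicit differentiation, $y'(x)=-\dd_x\Im\psi/\dd_y\Im\psi$, is in line with the paper, with the caveat that the denominator is $\sim d_0\nu_0x^{\nu_0-1}$ in every case---there is no separate drift-dominated case for $\dd_y\Im\psi$, again because the drift contributes $-\mu x$.
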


\begin{proof} Since $d_0>0$, it  follows from 
\eq{eq:as_exp_psi} that $y(\cL_\de;x)/x\to 0$ as $x\to+\infty$. Hence, we can use the Taylor expansion 
\[
(x+iy)^{\nu_j}=x^{\nu_j}+i\nu_j yx^{\nu_j-1}-\frac{\nu_j(\nu_j-1)}{2}y^2x^{\nu_j-2}-i\frac{\nu_j(\nu_j-1)(\nu_j-2)}{6}y^3x^{\nu_j-3}
+\cdots
\]
Let $d_j=d^r_j+id^i_j$, where $d^r_j, d^i_j\in\bR$. Then
\beqast
\Im d_j(x+iy)^{\nu_j}&=&d^r_j\left(\nu_j yx^{\nu_j-1}-\frac{\nu_j(\nu_j-1)(\nu_j-2)}{6}y^3x^{\nu_j-3}
+\cdots \right)\\
&&+d^i_j\left(x^{\nu_j}-\frac{\nu_j(\nu_j-1)}{2}y^2x^{\nu_j-2}+\cdots\right).
\eqast
For $y=y(\cL_\de;x)$, we obtain
\beqa\label{eq:as_Im}
\de&=&\sum_{j=0}^N\left[d^r_j\left(\nu_j yx^{\nu_j-1}-\frac{\nu_j(\nu_j-1)(\nu_j-2)}{6}y^3x^{\nu_j-3}
+\cdots \right)\right.\\\nonumber
&&\left.+d^i_j\left(x^{\nu_j}-\frac{\nu_j(\nu_j-1)}{2}y^2x^{\nu_j-2}+\cdots\right)\right]-\mu x+O(x^{-\nu_N}).
\eqa
Eq. \eq{eq:as_y_de}-\eq{eq:def_p_de} are immediate from \eq{eq:as_Im}. 
 To prove \eq{eq:dif:y_de}, we may replace $\Im\psi$ with $\Im\psi_{mod}$, the sum 
of a finite number of terms on the RHS of \eq{eq:as_Im}, without the $O$-term, and  $y(\cL_\de;x)$
with  the unique solution $y_{mod}(\de;x)$ of the modified equation $\de=\Im\psi_{mod}(x+iy)$.  We differentiate  $\Im\psi_{mod}$
and use \eq{eq:as_exp_psi}, \eq{eq:dif:y_de} and the inequality $\barnu<\nu_0$ to conclude that
there exists $\ka>0$ such that for any $\de\neq 0$
\bbe\label{eq:as_y_de_der}
\dd_y\Im\psi_{mod}(x+iy)\vert_{y=y_{mod}(\de;x)}=d_0\nu_0x^{1-\nu_0}(1+O(x^{-\ka})),\ x\to+\infty.
\ee 
Applying \eq{eq:as_y_de_der} and the implicit function theorem, we derive 
\bbe\label{eq:asymp_der_de}
\dd_\de y_{mod}(\de;x)=(1/(d_0\nu_0))x^{1-\nu_0}(1+O(x^{-\ka})),
\ee where $\ka>0$, and 
\eq{eq:dif:y_de} follows. The proof of \eq{eq:asymp_der_y} is analogous to the proof of \eq{eq:asymp_der_de}.
\end{proof}
Fix $-\be\in (\mum,\mup)$. For any $\de^*>0$, there exists $\eps>0$ such that for all $\xi\in B(-i\be,\eps):=\{\xi\ |\ |\xi-iy|<\eps, \Re\xi> 0\}$, $0<|\Im\psi(\xi)|<\de^*$.  Fix $\de\in (-\de^*,\de^*), \de\neq 0,$  and a trajectory $\cL_\de\in \cT_0$ starting at
$\xi\in B(-i\be,\eps)$. Fix $u\in (-\be-\eps, -\be+\eps)$ and extend $\cL$ to a smooth curve starting at $iu$ so that
  the added part is a subset of $B(-i\be,\eps)$, and
the extended curve is orthogonal to $i\bR$.
Finally, extend the curve further still to the curve symmetric w.r.t. $i\bR$. By a slight abuse of notation, 
denote the resulting curve $\cL(\de; u)$. Two curves $\cL(\de_-; u_-)$ and $\cL(\de_+; u_+)$
with $\de_-<\de_+$ are constructed so that $\cL(\de_-; u_-)\cap \cL(\de_+; u_+)=\emptyset$.
\vskip0.1cm
\noindent
{\sc Notation.} 
We write $\cL(\de;u)\subset \cT_{\mathrm{ext}}$ and $\cL(\de_-; u_-)\prec \cL(\de_+; u_+)$.

\begin{lem}\label{lem:extended curves_0} Let $\cL(\de;u), \cL(\de_\pm, u_\pm)\in \cT_{\mathrm{ext}}$. Then
\begin{enumerate}[(a)]
\item
for all $\xi\in \cL(\de;u)$, $\Im\psi(\xi)\in (-\de^*,\de^*)$,
and, as $x\to\pm\infty$ and $(x,y)\in \cL(\de,u)$,
\bbe\label{eq:as_y_de0}
y=y(\cL(\de,u),x):=p(\de)|x|^{\barnu+1-\nu_0}+O(|x|^{\barnu+1-\nu_0-\ka});
\ee
\item
if  $\cL(\de_-; u_-)\prec \cL(\de_+; u_+)$, then there exists $c>0$ such that,
for $(x,y_\pm)\in \cL(\de_\pm,u_\pm)$,
\bbe\label{eq:dist_cL_dem}
\mathrm{dist}((x,y_\mp),  \cL(\de_\pm,u_\pm))\ge c(1+|x|)^{1-\nu_0}.
\ee
\end{enumerate}
\end{lem}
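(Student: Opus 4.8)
The plan is to reduce everything to the asymptotic analysis already carried out in Lemma~\ref{lem:asymp_psi_SL} and to the explicit construction of the extended curves $\cL(\de;u)\in\cT_{\mathrm{ext}}$. Part (a) splits into two assertions. The bound $\Im\psi(\xi)\in(-\de^*,\de^*)$ on all of $\cL(\de;u)$ follows by construction: on the ``tail'' portion of the curve, which is a genuine trajectory $\cL_\de\in\cT_0$ of the vector field $\bV$, $\Im\psi$ is constant and equal to $\de\in(-\de^*,\de^*)$ because $\bV=(\dd_y\Im\psi,-\dd_x\Im\psi)$ is tangent to the level sets of $\Im\psi$; on the added finite arc inside $B(-i\be,\eps)$ and on its mirror image, we have $|\Im\psi|<\de^*$ because $\eps$ was chosen exactly so that $|\Im\psi(\xi)|<\de^*$ for all $\xi\in B(-i\be,\eps)$, and $\Re\psi$-symmetry together with $\psi(-\bar\xi)=\overline{\psi(\xi)}$ gives $\Im\psi(-\bar\xi)=-\Im\psi(\xi)$, so the mirror arc also stays in $(-\de^*,\de^*)$. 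For the asymptotic formula \eqref{eq:as_y_de0}: for $x\to+\infty$ the curve coincides with $\cL_\de$, so \eqref{eq:as_y_de} applies verbatim; for $x\to-\infty$ we use the curve's symmetry about $i\bR$, i.e. $(x,y)\in\cL(\de;u)\iff(-x,y)\in\cL(\de;u)$ (the mirror piece being a trajectory with $\Im\psi=-\de$, and $p(-\de)=-p(\de)$ cancels against $|x|$), so \eqref{eq:as_y_de0} with $|x|$ in place of $x$ holds on both ends. This needs a small bookkeeping remark about whether $p$ is odd in $\de$; in the cases $\barnu<0$ or $\nu_{j_0}=0,\mu=0$ it is affine in $\de$ and the leading coefficient is odd, while in the remaining cases $p(\de)$ is constant in $\de$ and the statement is that $y=\pm p\,|x|^{\barnu+1-\nu_0}+O(\cdots)$ on the two ends — writing $|x|$ absorbs the sign correctly after re-inspecting the construction, and this is the one place I would check carefully.

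For part (b), the distance estimate \eqref{eq:dist_cL_dem}, I would argue as follows. Fix $x$ with $|x|$ large. The curve $\cL(\de_\pm;u_\pm)$ near abscissa $x$ is, by part (a) and by \eqref{eq:asymp_der_y}, a graph $y=y(\cL(\de_\pm,u_\pm),x')$ over $x'$ in a neighbourhood of $x$ whose slope is $O(|x|^{-\min\{1,\nu_0-\barnu\}})\to 0$; in particular the curve is nearly horizontal at scale $|x|$, so the Euclidean distance from the point $(x,y_\mp)$ on the other curve to $\cL(\de_\pm,u_\pm)$ is comparable (up to a factor $1+o(1)$) to the vertical gap $|y(\cL(\de_+,u_+),x)-y(\cL(\de_-,u_-),x)|$. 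That vertical gap is bounded below by \eqref{eq:dif:y_de} (applied with $a<\de_-<\de_+<b$, $\de_\pm\neq0$, on the right end $x\ge R$, and by the mirror version on the left end): it is $\ge\frac{\de_+-\de_-}{2d_0\nu_0}|x|^{1-\nu_0}$. Since $1-\nu_0\le \barnu+1-\nu_0\le\ldots$ — actually $1-\nu_0$ is the relevant exponent because $\barnu\ge 0$ forces $|x|^{1-\nu_0}\le |x|^{\barnu+1-\nu_0}$ only when $\barnu\le 0$; the correct reading is that \eqref{eq:dif:y_de} already gives the lower bound with exponent $1-\nu_0$, which is exactly the exponent in \eqref{eq:dist_cL_dem}. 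For the finitely many bounded values of $x$ (say $|x|\le R$), the two curves are disjoint compact arcs (by the construction ensuring $\cL(\de_-;u_-)\cap\cL(\de_+;u_+)=\emptyset$, which is built into $\prec$), so the distance there is bounded below by a positive constant; absorbing this into $c$ and using $(1+|x|)^{1-\nu_0}$ is bounded on $|x|\le R$ gives the uniform bound.

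The main obstacle is the passage from ``vertical gap'' to ``Euclidean distance'': one must verify that the nearest point on $\cL(\de_\pm,u_\pm)$ to $(x,y_\mp)$ has abscissa $x'$ with $|x'-x|$ small enough that the asymptotic formulas \eqref{eq:as_y_de0} and the slope bound \eqref{eq:asymp_der_y} are still valid and uniform there. This is handled by noting that if $(x',y')$ is the foot of the perpendicular then $|x'-x|\le\mathrm{dist}((x,y_\mp),\cL(\de_\pm,u_\pm))\le|y_+-y_-|\le C|x|^{1-\nu_0}=o(|x|)$, so $x'=x(1+o(1))$ and the asymptotics transfer; plugging this back shows the vertical gap at $x'$ differs from that at $x$ by a lower-order term, and the estimate closes. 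The only genuinely delicate point is uniformity of all the $O(\cdot)$ constants over the finite family of curves involved and over the compact $\de$-range, which is guaranteed by Lemma~\ref{param1} and the ``for any bounded interval'' clauses in Lemma~\ref{lem:asymp_psi_SL}.
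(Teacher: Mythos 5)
Your proposal is correct and follows essentially the same route as the paper, which simply observes that both statements are immediate from the construction of $\cL(\de;u)$, the identity $\overline{\psi(\xi)}=\psi(-\bar\xi)$, and \eq{eq:as_y_de}--\eq{eq:dif:y_de}; your write-up just makes the vertical-gap-to-Euclidean-distance step and the compact-part argument explicit. The one caveat is your parenthetical about $p(-\de)=-p(\de)$: this is neither needed nor true in general (in several cases of \eq{eq:def_p_de} the value $p(\de)$ does not depend on $\de$), and the correct mechanism is exactly the one you also state, namely that $\cL(\de;u)$ is symmetric w.r.t. $i\bR$, so the $y$-coordinate of the left branch at $-x$ equals that of the right branch at $x$, which yields \eq{eq:as_y_de0} with $|x|$ directly.
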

\begin{proof} Both statements are immediate the construction of curves $\cL(\de; u)$,  equality $\overline{\psi(\xi)}=\psi(-\bar\xi)$ and \eq{eq:as_y_de}-\eq{eq:dif:y_de}.
\end{proof}
\begin{rem}\label{rem:param_cL}{\rm Lemma \ref{lem:extended curves_0} allows us to parametrize $\cL(\de,u)$ using a smooth
map $\bR\mapsto x(t)+iy(t)\in \cL(\de,u)$ such that $x(t)=t$ in  neighborhoods of $\pm \infty$, and $(x(-t), y(-t))=(-x(t), y(t))$. In the neighborhoods of $\pm\infty$, we
may and will use $x$ as a parameter.
}
\end{rem}  


\subsection{Proof of Theorem \ref{thm:Euro-non-symm0}}
Under Conditions ($G;\be$) and ($\hG;a$), $-i\be$ is not a pole of $\hG$. We fix $\de^*\in (0,\pi/2)$, $\de\in (-\de^*,\de^*), \de\neq 0$, $u\in \bR$ and a curve $\cL(\de; u)$ 
such that the interval connecting $iu$ and $-i\be$ is in the domain of analyticity of $\hG$ and $\Phi$, and $\Im\psi(\xi)\in (-\de^*,\de^*)$ for all
$\xi\in \cL(\de;u)$. 
\begin{thm}\label{thm:Euro-non-symm1} Let Conditions $(\psi,\infty)$,  $(\bV,\infty)$, $(G;\be)$, $(\hG;a)$ hold, $n>1$, and  $\nu_0>(\barnu+1)/2$. Then  there exist $\de^*$, $\de$, $u$, and  $\cL(\de;u)\in\cT_{\mathrm{ext}}$  such that 
Theorem \ref{thm:Euro-non-symm0} holds with $\cL=\cL(\de;u)$. 
\end{thm}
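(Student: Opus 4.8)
The plan is to manufacture the contour $\cL=\cL(\de;u)$ out of a single trajectory of $\bV$ supplied by Condition $(\bV,\infty)$, to read off parts (a)--(b) of Theorem \ref{thm:Euro-non-symm0} from the geometry of $\Phi(\cL)$ together with the trajectory asymptotics of Lemma \ref{lem:asymp_psi_SL}, then to obtain (c) by pushing the inner line $\{\Im\xi=-\be\}$ onto $\cL$, and finally (d) by the operator-form $Z$-SINH deformation of the outer contour. The quantitative work is concentrated in (c).

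\textbf{Construction of $\cL$ and part (a).} First I would fix $\de^*\in(0,\ga/2)$ and, using Condition $(\bV,\infty)$ (equivalently, for an SL-process, Lemma \ref{lem:vect field}), choose a small $\eps>0$, a level $\de$ with $0<|\de|<\de^*$ — in the residual case $\nu_0\in(0,1)$, $\barnu=0$ choosing $\de$ so that the coefficient $p(\de)$ in \eqref{eq:def_p_de} is as small as the constraint $\de\ne0$ permits, in order to keep the growth of $\Im\xi$ along $\cL$ minimal — and a trajectory $\cL_\de\in\cT_0$ issuing from a point of $B(-i\be,\eps)$ on which $\Im\psi\equiv\de$. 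Shrinking $\eps$ I may assume $|\Im\psi|<\de^*$ on $B(-i\be,\eps)$ and that the at most countable set of poles of $\hG_0$ on $i(\mum,\mup)$ is avoided by a segment $[iu,-i\be]$ with $u$ near $-\be$; extending $\cL_\de$ backwards to $iu$ orthogonally to $i\bR$ and reflecting in $i\bR$ produces $\cL(\de;u)\in\cT_{\mathrm{ext}}$. By Remark \ref{rem:param_cL} it carries a smooth parametrization $\chi_\cL$ equal near $\pm\infty$ to $y\mapsto y+iy(\cL(\de,u),y)$; since the trajectory is a regular real-analytic level curve of $\Im\psi$ ($\dd_y\Im\psi\ne0$ on it, cf. \eqref{eq:as_y_de_der}) and $\psi$, $\hG_0$ are analytic off $i\bR$, the implicit function theorem continues $\chi_\cL$ analytically to a strip $S_{(-d_\cL,d_\cL)}$, and Lemmas \ref{lem:asymp_psi_SL} and \ref{lem:extended curves_0} give $\chi_\cL(x+ia)=x+O(|x|^\al)$, $\dd_x\chi_\cL(x+ia)=1+O(|x|^{-\ka})$ uniformly in $a\in(-d_\cL,d_\cL)$, with $\al\in(0,1)$ the larger of a small positive number and $\barnu+1-\nu_0$; this is (a)(i),(iii). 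For (a)(ii): on $\cL$ itself $\arg\Phi(\xi)=-\Im\psi(\xi)$ equals $\mp\de$ exactly on the two trajectory tails and lies in $(-\de^*,\de^*)$ on the compact connecting arc, so $\Phi(\cL)\subset\cC_{\de^*}\subset\cC_{\ga/2}$; by continuity this extends to $\chi_\cL(S_{(-d_\cL,d_\cL)})$ once $d_\cL$ is taken small enough.

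\textbf{Part (b) and the uniform bound.} Part (b) is then planar geometry. For $q\in-\cC_{\pi-\ga}$, $\arg\big(q\Phi(\xi)\big)\in(\ga-\de^*,2\pi-\ga+\de^*)$ stays at distance $\ge\ga-\de^*>0$ from $2\pi\bZ$, so $q\Phi(\xi)\notin[1,+\infty)$ and $|1-q\Phi(\xi)|\ge\sin(\ga-\de^*)$. For $q\in\cD(0,r)$ I use the Cauchy--Riemann identity $\bV=\nabla\Re\psi$: $\Re\psi$ is nondecreasing along the flow, so $M_\cL:=\sup_{\cL}|\Phi|=\sup_{\cL}e^{-\Re\psi}$ is essentially attained on $B(-i\be,\eps)$ and $M_\cL\to\Phi(-i\be)$ as $\eps\downarrow0$; choosing $\eps$ with $rM_\cL<1$ gives $|1-q\Phi(\xi)|\ge1-rM_\cL>0$ and, since $|q\Phi(\xi)|<1$, also $1-q\Phi(\xi)\notin(-\infty,0]$. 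The same bounds survive on $\chi_\cL(S_{(-d_\cL,d_\cL)})$, establishing (b) with $c=\min\{\sin(\ga-\de^*),\,1-rM_\cL\}$. Moreover $|\Phi(\xi)|=e^{-\Re\psi(\xi)}\le e^{-d_0|\Re\xi|^{\nu_0}/2}$ for $|\Re\xi|$ large, while on $\cL$ one has $|\Im\xi|\le d+C|\Re\xi|^\al$, hence $|e^{i(x-a)\xi}|\le C_x\,e^{C|\Re\xi|^\al}$, and $|\hG_0(\xi)|\le C|\Re\xi|^{-1}$; since $\nu_0>(\barnu+1)/2$ amounts to $\nu_0>\al$, the inner integral $I(q):=\frac1{2\pi}\int_\cL e^{i(x-a)\xi}\frac{\Phi(\xi)\hG_0(\xi)}{1-q\Phi(\xi)}\,d\xi$ converges absolutely and obeys $|I(q)|\le C(n,x)$ uniformly in $q\in(-\cC_{\pi-\ga})\cup\cD(0,r)$.

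\textbf{Parts (c) and (d).} For (c), start from the representation $V_{eu}(G,q_0;n,x)=\frac{q_0^n}{2\pi i}\int_{|q|=r}\frac{dq}{q^{n}}\,\frac1{2\pi}\int_{\Im\xi=-\be}e^{ix\xi}\frac{\Phi(\xi)\hG(\xi)}{1-q\Phi(\xi)}\,d\xi$, valid for $r\in(0,1/\Phi(-i\be))$ and obtained as in the first part of the proof of Theorem \ref{thm:Euro_symm} (resolvent of the Esscher-twisted transition operator together with the pseudodifferential calculus), the derivation using only $\sup_{\Im\xi=-\be}|\Phi(\xi)|=\Phi(-i\be)$ and not the symmetry of $\Phi$. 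Substituting $\hG=e^{-ia\xi}\hG_0$ from Condition $(\hG;a)$ and deforming $\{\Im\xi=-\be\}$ into $\cL=\cL(\de;u)$ yields \eqref{eq:Euro_non_symm_10}; the deformation is legitimate by Cauchy's theorem because $\Phi$ is analytic and $\hG_0$ meromorphic with poles confined to $i(\mum,\mup)$, none of which lies in the thin region between the two curves (which meets $i\bR$ only at $iu$), and because the connecting segments at $\Re\xi=\pm R$ contribute $O\big(R^{\al}e^{C R^{\al}-d_0R^{\nu_0}/2}\big)\to0$, again by $\nu_0>\al$. Finally, for (d), fix $\cL$; then $q\mapsto q_0^n\,q\,I(q)$ satisfies the operator form of Condition $Z$-SINH$(\ga)$ with $a_{\tV}=1$ — $I(q)$ is analytic on $(-\cC_{\pi-\ga})\cup\cD(0,r)$ by (b) (and, being given by an absolutely convergent integral, on a slightly larger set, so after the rescaling of Section \ref{s:oper_form} the disc $\cD(0,1)$ is covered), with $|I(q)|\le C$ — so, since $n>1=a_{\tV}$, Theorem \ref{thm: sep from 0} together with Lemma \ref{lem:Z-SINH}(f) permits the outer contour $\{|q|=r\}$ to be deformed into $\cL_{L;\sg_\ell,b_\ell,\om_\ell}$ with $\sg_\ell-b_\ell\sin\om_\ell=r$ for any $\om_\ell\in(\ga/2,\pi/4)$, giving \eqref{eq:Euro_non_symm_20}.

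\textbf{Main obstacle.} The hard part is (c): one must trade, quantitatively, the polynomial-in-$\Re\xi$ growth of $\Im\xi$ along $\cL$ — exponent $\al=\barnu+1-\nu_0$, read off from Lemma \ref{lem:asymp_psi_SL} — against the $e^{-d_0|\Re\xi|^{\nu_0}}$ decay of $|\Phi|$, and the hypothesis $\nu_0>(\barnu+1)/2$ is precisely what makes both the absolute convergence of $\int_\cL$ and the vanishing of the connecting integrals go through, the degenerate case $\nu_0\in(0,1)$, $\barnu=0$ being squeezed in by depressing the coefficient $p(\de)$. A secondary point is that (a)(ii) and (b) are required on the whole strip $\chi_\cL(S_{(-d_\cL,d_\cL)})$, not merely on $\cL$; since $|\psi'|$ grows along $\cL$ when $\nu_0>1$, this forces a $\nu_0$-dependent choice of $d_\cL$ and of the parametrization, to be arranged compatibly with the asymptotics (a)(iii).
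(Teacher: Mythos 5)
Your proposal follows essentially the same route as the paper: the extended-trajectory contour $\cL(\de;u)$ with $|\Im\psi|<\de^*$ giving $\Phi(\xi)\in\cC_{\de^*}$ and hence $1-q\Phi(\xi)$ bounded away from $(-\infty,0]$ uniformly in $(q,\xi)$, the tradeoff between the growth $e^{C|\Re\xi|^{\barnu+1-\nu_0}}$ of the oscillating factor and the decay $e^{-d|\Re\xi|^{\nu_0}}$ of $\Phi$ (i.e.\ exactly $\barnu+1-\nu_0<\nu_0$) to justify pushing $\{\Im\xi=-\be\}$ onto $\cL$, and the Lemma \ref{lem:Z-SINH}-type argument with $n>1$ for the outer deformation, so the proof is correct and matches the paper's. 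The one inaccuracy is the aside that the residual case $\nu_0\in(0,1)$, $\barnu=0$ could be ``squeezed in by depressing the coefficient $p(\de)$'': shrinking $|\de|$ only reduces the constant, not the exponent $1-\nu_0$, and the paper instead flattens the contour beyond a finite $x^*$ (Theorem \ref{thm:Euro-non-symm2}); since that case lies outside the hypothesis $\nu_0>(\barnu+1)/2$ of the present theorem, this does not affect the statement you were asked to prove.
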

 \begin{proof} Let $r\in (0,1)$.  
 On the strength of Conditions $(\psi,\infty)$,  $(\bV,\infty)$, $(G;\be)$, $(\hG;a)$, and Lemma \ref{lem:extended curves_0}, there exists a simply connected region
 $\cU$  of analyticity of the inner integrand on the RHS' of \eq{eq:Euro_non_symm_10} amd \eq{eq:Euro_non_symm_20} such that (1) $\Phi(\xi)\in \cC_{\de^*}$ for all $\xi\in \cU$, 
 and, therefore, $|1-q\Phi(\xi)|\ge c>0$, where $c>0$ is independent of $q\in \cD(0,r)$ and $\xi\in \cU$,
 and (2)
 $\{\Im\xi=-i\be\}\subset \cU$,
  $\cL(\de;u)\subset\cU$. Hence, for each $q\in \cD(0,r)$, the contour $\{\Im\xi=-i\be\}$ can be deformed into $\cL(\de;u)$ remaining in $\cU$.
 Furthermore, in view of \eq{eq:as_y_de0}, in the process of the deformation, the factor $e^{i(x-a)\xi}$ is uniformly bounded if
 $(x-a)p(\de)>0$, and admits a bound via $e^{p(\de)|\Re\xi|^\ka}$, where $\ka=\barnu+1-\nu_0<\nu_0$.  Since $|\Phi(\xi)|\le Ce^{-d|\Re \xi|^{\nu_0}}$, for any $d<d_0$, the integrand decays faster than an exponential function, hence,
 the deformation is justified. This proves \eq{eq:Euro_non_symm_10}.
 
 Next, note that 
  $\Phi(\xi)\in \cC_{\de^*}$ for all $\xi\in \cU$, and therefore, for any $\ga\in (\de^*,\pi/2)$ and
$q\in (-\cC_{\pi-\ga})^c\cup\cD(0,r)$, $1-q\Phi(\xi)\not\in (-\infty,0]$ and bounded away from 0.
Hence, the same argument as in the proof of Lemma \ref{lem:Z-SINH} can be used to choose
$\sg_\ell$ and $b_\ell$ so that, in the process of deformation of the contour $\{|q|=r\}$ into $\cL_{L;\sg_\ell,b_\ell,\om_\ell}$, 
$1-q\Phi(\xi)\not\in (-\infty,0]$ and bounded away from 0,
uniformly in $q$ and $\xi$. Since $n>1$, the integrand is bounded by an absolutely integrable function,
hence, the transformation of the outer contour is justified, and \eq{eq:Euro_non_symm_20} is proved.
 
 \end{proof}

If $\nu_0\le (\barnu+1)/2$, then $\nu_0\in (0,1)$. In this case, assuming that $\barnu=0$, we can essentially repeat the proof of Theorem \ref{thm:Euro-non-symm1} 
using the following modification of the contour $\cL(\de;u)$. Since $\barnu=0$, all $d_j>0$ if $\nu_j>0$. Therefore,
$\barnu+1-\nu_0=1-\nu_0<1$, and, if  $\nu_j>0$ and $j>0$, then, for $C>0$ fixed,  
\bbe\label{eq:asymdj}
\Im d_j(x+iy(x))^{\nu_j}=O(x^{\nu_j}x^{1-\nu_0-1})=o(1), \ \mathrm{as}\ x\to+\infty,
\ee
uniformly in $y(x)\in (-Cx^{1-\nu_0}, Cx^{1-\nu_0})$. For $j=0$, we have
\bbe\label{eq:asymdj0}
\Im d_0(x+iy(x))^{\nu_0}=(d_0\nu_0)y(x) x^{\nu_0-1}+o(1), \ \mathrm{as}\ x\to+\infty.
\ee 
We fix $x^*>0$, define 
\[
y^*(x^*;\cL(\de,u),t)=\begin{cases}
y(\cL(\de,u); x^*), & t>x^* \\
y(\cL(\de,u); t), & -x^*\le t \le x^* \\
y(\cL(\de,u); -x^*), & t<-x^*,
\end{cases}
\]
and $\cL^*(x^*;\de,u)\vert_{|t|\ge x^*}=\{t+iy^*(x^*;\cL(\de,u),t)\ |\ |t|\ge x^*\}$ and 
$\cL^*(x^*;\de,u)\vert_{|t|\le x^*}=\cL(\de,u)\vert_{|t|\le x^*}$.
Since 1) $-\de^*<\de <\de^*$, 2) $\Im\psi(x+iy(\cL_\de,x))=\de$ for $|x|\ge x^*$, if $x^*$ is sufficiently large, 3)
$\Im\psi(\xi)\in (-\de^*,\de^*), \xi\in \cL(\de,u)$, and 4) \eq{eq:asymdj}
and \eq{eq:asymdj0} hold,  we conclude that if $x^*$ is sufficiently large, then  $\Im\psi(\xi)\in (-\de^*,\de^*), \xi\in \cL^*(x^*; \de,u)$.
Since $y^*(x^*;\de,\cdot)$ is uniformly bounded, we can repeat the proof of Theorem \ref{thm:Euro-non-symm1} (a), (b),
with evident changes, and obtain
\begin{thm}\label{thm:Euro-non-symm2} Let Conditions  $(\psi,\infty)$,  $(\bV,\infty)$, $(G;\be)$, $(\hG;a)$ hold, $n>1$,
$\nu_0\in (0,1)$,  $d_j>0$ if $\nu_j\ge 0$, and let $\de^*, \de, x^*, \cL^*(x^*;\de,u)$ be as described above. Then
the statements of Theorem \ref{thm:Euro-non-symm1} are valid with $\cL^*(x^*;\de,u)$ in place of $\cL(\de,u)$.
\end{thm}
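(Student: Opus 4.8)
\emph{Proof proposal.} The plan is to re-run the proof of \theor{thm:Euro-non-symm1} with $\cL(\de;u)$ replaced throughout by the truncated contour $\cL^*=\cL^*(x^*;\de,u)$, starting from the representation of $V_{eu}(G,q_0;n,x)$ whose inner $\xi$-integral runs over the line $\{\Im\xi=-\be\}$ (this holds by the residue theorem and the Fourier representation of $(I-qP_\be)^{-1}$, as in the opening display of the proof of \theor{thm:Euro_symm}; no symmetry is used there). The argument of \theor{thm:Euro-non-symm1} uses only two features of the inner contour: (i) $\Phi$ maps a simply connected complex neighborhood $\cU$ of it, contained in the common analyticity domain of $\psi$ and $\hG_0$, into a thin cone $\cC_{\de^*}$, so that $1-q\Phi(\xi)$ avoids $(-\infty,0]$ and $|1-q\Phi(\xi)|\ge c>0$ uniformly over all $q$ occurring in the $Z$-inversion (on $\cD(0,r)$, on $\cL_{L;\sg_\ell,b_\ell,\om_\ell}$, and on the intermediate contours); and (ii) $|\Phi(\xi)|\le Ce^{-d|\Re\xi|^{\nu_0}}$ along it for some $d\in(0,d_0)$ which, since $\Im\xi$ is bounded on $\cL^*$ and hence so is $|e^{i(x-a)\xi}|$, makes the inner $\xi$-integral absolutely convergent on $\cL^*$ and bounds it uniformly in $q$. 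Granting (i)--(ii): the deformation of $\{\Im\xi=-\be\}$ into $\cL^*$ inside $\cU$ is legitimate because $\cL^*$ has bounded height ($y^*$ is bounded), so the connecting vertical segments at $\Re\xi=\pm R$ have length $O(1)$ while on them the integrand is $O(e^{-dR^{\nu_0}})\to0$; this gives \eq{eq:Euro_non_symm_10}. The subsequent deformation of $\{|q|=r\}$ into $\cL_{L;\sg_\ell,b_\ell,\om_\ell}$ then works exactly as in \theor{thm:Euro-non-symm1}: $n>1$ and the uniform bound on the inner integral give an integrable dominating function, and $\om_\ell\in(\ga/2,\pi/4)$ together with $\sg_\ell,b_\ell$ satisfying $\sg_\ell-b_\ell\sin\om_\ell=r$ are chosen by the \lemm{lem:Z-SINH}-type construction so that $1-q\Phi(\xi)$ stays off $(-\infty,0]$ throughout; this gives \eq{eq:Euro_non_symm_20}. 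Part (a) of \theor{thm:Euro-non-symm0} follows once the two corners of $\cL^*$ at $\Re\xi=\pm x^*$ are rounded over a bounded interval: beyond it $\cL^*$ is the line $\Im\xi=y_0$ with the trivial analytic parametrization $t\mapsto t+iy_0$, and on the compact middle part $\cL^*$ is real-analytic (a trajectory of the real-analytic vector field $\bV$ glued to a real-analytic arc), hence extends to a strip $S_{(-d_{\cL^*},d_{\cL^*})}$; shrinking $d_{\cL^*}$ and taking $\de^*\le\ga/2$ secures $\Phi\in\cC_{\ga/2}$ on the image by continuity, and the asymptotics of $\chi_{\cL^*}$ are trivial on the tails.

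The step I would have to establish from scratch, and the place where the hypotheses enter, is the verification of (i) on the two horizontal tails of $\cL^*$ ($|\Re\xi|\ge x^*$, $\Im\xi=y_0$), where $\cL^*$ no longer tracks a level set of $\Im\psi$. I first note that $\nu_0\in(0,1)$ forces $\mu=0$ by Condition $(\psi,\infty)$(3), and ``$d_j>0$ whenever $\nu_j\ge0$'' forces $\nu_{j_0}\le0$, so $\barnu=0$ and $\barnu+1-\nu_0=1-\nu_0<1$. On $\{|\Re\xi|\le x^*\}$ property (i) is inherited from $\cL(\de;u)$ via \lemm{lem:extended curves_0}. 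On a tail, $y_0=y(\cL(\de;u);x^*)$ is a fixed constant with $|y_0|=O((x^*)^{1-\nu_0})$; substituting $\xi=t+iy_0$ into the Taylor expansion of $\Im\psi$ about the real axis that underlies \eq{eq:asymdj}--\eq{eq:asymdj0}, and using $\mu=0$ and $d_j>0$ for $\nu_j\ge0$, the terms with $j\ge1$, $\nu_j>0$ are $O(|y_0|\,|t|^{\nu_j-1})=o(1)$, the $j=0$ term is $d_0\nu_0 y_0|t|^{\nu_0-1}+o(1)=o(1)$ because $\nu_0<1$, and the $\nu_j<0$ terms together with the remainder are $o(1)$; hence $\Im\psi(t+iy_0)\to0$ as $|t|\to\infty$. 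Since moreover $\Im\psi$ equals $\de$ at the junctions $\Re\xi=\pm x^*$ (there $y_0=y(\cL(\de;u);x^*)$) and decays in $|t|$ to $0$ beyond them, while $|\de|<\de^*$, one gets $|\Im\psi(\xi)|<\de^*$ on all of $\cL^*$ once $x^*$ is large, i.e.\ $\Phi(\xi)\in\cC_{\de^*}$ there. This persists on a neighborhood $\cU$, which I keep inside the analyticity domain of $\psi$ and $\hG_0$ (both analytic off $i\bR$, and analytic near $-i\be$ since $-\be\in(\mum,\mup)$ is not a pole of $\hG_0$) and small enough that $r\sup_{\cU}|\Phi|<1$, using $r<1/\Phi(-i\be)$ and taking $\de^*,\eps,\de$ small. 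Property (ii) is then immediate from the same expansion, since on a tail $\Re\psi(t+iy_0)=d_0|t|^{\nu_0}(1+o(1))\to+\infty$ (here $d_0>0$ enters) and $|\Phi|$ is bounded on the compact middle piece.

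I expect the main obstacle to be exactly this tail estimate and its compatibility with the choice of $x^*$: one needs $x^*$ large enough that $|\Im\psi|<\de^*$ on the tails, yet the tail height $y_0$ itself grows like $(x^*)^{1-\nu_0}$, so the two must be balanced — and they are, precisely because $\nu_0<1$ makes $y_0|t|^{\nu_0-1}\le y_0(x^*)^{\nu_0-1}=O(1)$, with $\Im\psi(t+iy_0)$ matching $\de$ at $t=\pm x^*$ and decaying to $0$ for larger $|t|$, so no discontinuity is created at the junction. This is also why the truncation is needed only when $\nu_0\le(\barnu+1)/2$: for $\nu_0>(\barnu+1)/2$ the untruncated $\cL(\de;u)$ already has an absolutely convergent $\xi$-integral, its height $\sim|\Re\xi|^{\barnu+1-\nu_0}$ growing slower than $|\Re\xi|^{\nu_0}$, whereas for small $\nu_0$ that fails and one must flatten the tails. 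Everything else is a routine transcription of the $\nu_0>(\barnu+1)/2$ argument, since truncating only lowers the contour and hence only makes the bounds easier.
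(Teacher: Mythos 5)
Your proposal is correct and follows essentially the same route as the paper: it uses the flattened contour $\cL^*(x^*;\de,u)$, verifies $|\Im\psi|<\de^*$ on the horizontal tails via the expansions \eq{eq:asymdj}--\eq{eq:asymdj0} (with $\mu=0$, $\barnu=0$ and the balance $|y_0|\,|t|^{\nu_0-1}\lesssim|p(\de)|$ for $|t|\ge x^*$), and then exploits the uniform boundedness of $y^*$ to rerun the proof of Theorem \ref{thm:Euro-non-symm1}. Your additional details (corner smoothing for the parametrization, the uniform bound $|1-q\Phi(\xi)|\ge c$ on a neighborhood $\cU$) are exactly the "evident changes" the paper leaves implicit.
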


\section{Pricing barrier options with discrete monitoring in L\'evy models}\label{s:barrier_Levy_non_symm}  
\subsection{General formulas}\label{ss:single_gen}
 For a measurable function $G$ and barrier $h$, consider 
the up-and-out option with the payoff $G(X_n)$ at maturity date $n$. We may assume that the discount factor $q_0= 1$.
Let  $\barX_n=\max_{0\le m\le n}X_m$ and $\uX_n=\min_{0\le m\le t}X_m$ be the maximum
and minimum processes (defined path-wise, a.s.); $X_0=\barX_0=\uX_0=0$. Then the option price at time 0 and spot $x$ can be written as
$
V(G, h;n;x)=\bE[G(x+X_n)\bfo_{x+\barX_n<h}].
$
At the first step, as in \cite{FusaiAbrahamsSguarra06}, where barrier options with discrete monitoring in the Brownian motion model  are priced, we make the discrete Laplace transform ($Z$-transform) of the series $\{V(G,h;n,x)\}_{n=0}^\infty$. 
Next, for $q\in (0,1)$, the discrete Laplace transform $\tV(G,h;q,x)$ is calculated using the expected present value operators (EPV-operators) technique which is the operator form of the Wiener-Hopf factorization technique. Let $T_q$ be a random variable with the distribution $\bE[T_q=n]=(1-q)q^n$, independent of $X$, and let $u$ be a bounded measurable function. Then
$\cEq u(x)=\bE[u(x+X_{T_q})]$,
  $\cEpq u(x)=\bE[u(x+\barX_{T_q})]$ and $\cEmq u(x)=\bE[u(x+\uX_{T_q})]$ define the action of the EPV-operators under the the random walk $X$ and minimum and maximum processes. The EPV operators $\cEq$ and $\cE^\pm_q$ are pseudo-differential operators (pdo) with the symbols $(1-q)/(1-q\Phi(\xi))$ and $\phi^\pm_q(\xi)$, respectively, where 
 $\phipq(\xi)=\bE[e^{i\barX_{T_q}\xi}]$ and $\phimq(\xi)=\bE[e^{i\uX_{T_q}\xi}]$ are the Wiener-Hopf factors. Recall the Wiener-Hopf
 identity
 \bbe\label{whf1}
 \phipq(\xi)\phimq(\xi)=\frac{1-q}{1-q\Phi(\xi)}, \ \xi\in \bR, 
 \ee
 and its operator form
 \bbe\label{whfoper}
 \cEpq\cEmq=\cEq.
 \ee
  Let $G$ be uniformly bounded and measurable. Then, as it is proved in \cite{perp-Bermuda,NG-MBS,IDUU,EfficientDiscExtremum} in increasing  generality,
  \bbe\label{eq:perp_barr}
 \tV(G,h;q,x) =(1-q)^{-1}(\cEpq\bfo_{(-\infty, h)}\cEmq G)(x).
 \ee
As it is stated in \cite[Remark 3.3]{EfficientDiscExtremum}, \eq{eq:perp_barr} (Eq. (3.14) in \cite{EfficientDiscExtremum}) is less
 efficient for a numerical realization than the following representation   \cite[Eq. (3.13)]{EfficientDiscExtremum}:
  \bbe\label{eq:perp_barr_2}
 \tV(G;h;q,x)=G(x)+(q\Phi(D)(1-q\Phi(D))^{-1}G)(x)-(1-q)^{-1}(\cEpq\bfo_{[h,+\infty)}\cEmq G)(x),
 \ee
 especially if $\hG$ does not decay at infinity sufficiently fast.
Assuming that Conditions ($\phi,\infty$), ($G;\be$) and ($\hG;a$) hold, there exist $r>0$ and a strip $S_{(\mumpr,\muppr)}$ around $\{\Im\xi=-\be\}$ such that (i) $\hG$ is analytic on $S_{(\mumpr,\muppr)}$; (ii)
$(1-q)/(1-q\Phi(\xi))$,  $\phi^\pm_q(\xi)$ and the reciprocals $1/\phi^\pm_q(\xi)$ 
are analytic w.r.t. $q$ in the disc $\cD(0,r)$, admit analytic continuation  w.r.t. $\xi$ to $S_{(\mumpr,\muppr)}$; (iii)
there exist $C>0$ and $\al$ such that the functions listed in (ii) are bounded (in absolute value) by $C(1+|\xi|)^\al$ for
all $q\in \cD(0,r)$ and $\xi\in S_{(\mumpr,\muppr)}$.

Let $n>1$. 
 We apply the inverse $Z$-transform and calculate $V(G,h;n,x)$, for 
 and $x<h$, 
 \bbe\label{eq:perp_barr_3}
 V(G;h;n,x)=\frac{1}{2\pi i}\int_{|q|=r}dq\, q^{-n}\int_{\Im\xi=-\be}e^{i(x-a)\xi}
 \frac{\Phi(\xi)\hG_0(\xi)}{1-q\Phi(\xi)}d\xi-V_1(G,h;n,x),
 \ee
 where $r\in (0,1/\Phi(-i\be))$ is sufficiently small, and
 \bbe\label{eq:V1Gh}
 V_1(G,h;n,x)=\frac{1}{2\pi i}\int_{|q|=r}dq\, q^{-n-1}(1-q)^{-1}(\cEpq\bfo_{[h,+\infty)}\cEmq G)(x).
 \ee
The first term on the RHS of \eq{eq:perp_barr_3} is the price of a European option; the conditions for
the efficient inverse $Z$-transform are derived in Section \ref{s:Euro_Levy_non-symmetric}. Since $\phimq(\xi)^{-1}$ is analytic in the half-plane 
$\{\Im\xi<\muppr\}$, $((\cEmq)^{-1}\bfo_{[h,+\infty)}\cEmq G)(x)=0, x<h$. Hence, using \eq{whfoper}, we may rewrite \eq{eq:V1Gh}
as 
 \bbe\label{eq:V1Gh2}
 V_1(G,h;n,x)=\frac{1}{2\pi i}\int_{|q|=r}dq\, q^{-n}\Phi(D)(1-q\Phi(D))^{-1}(\cEmq)^{-1}\bfo_{[h,+\infty)}\cEmq G)(x).
 \ee

\subsection{Integral representations of  $V_1(G,h;n,x)$}\label{ss:repr_V1} 
Explicit formulas for  $V_1(G,h;n,x)$ involve double or triple integrals; the integrands are expressed in terms of  the Wiener-Hopf factors. 
 The formulas for and  properties of the Wiener-Hopf factors are well-known, see, e.g., \cite{Borovkov1,NG-MBS,IDUU,EfficientDiscExtremum}. 
\begin{prop}\label{prop_WHF} ( \cite[Prop. 3.2]{EfficientDiscExtremum}) Let $\Phi$ admit analytic continuation to a strip $S_{(\mum,\mup)}$, where $\mum\le 0\le \mup$, and $\mum<\mup$.
Then, for any $q\in (0,1)$, 
\begin{enumerate}[(a)]
\item
there exist $\lm\ge \mum$ and $\lp\le \mup$ s.t. $\lm<\lp$, and $c>0$ such that 
\bbe\label{eq:boundPhiq}
\Re (1-q\Phi(\xi))\ge c, \quad \xi\in S_{(\lm,\lp)};
\ee
\item
furthermore, for any $\xi$ in the half-plane $\{\Im\xi>\lm\}$ and any $\omm\in (\lm, \Im\xi)$,
\beqa\label{phip1}
 \phipq(\xi)&=&\exp\left[-\frac{1}{2\pi i}\int_{\Im\eta=\omm}\frac{\xi\ln((1-q)/(1-q\Phi(\eta)))}{\eta(\xi-\eta)}d\eta\right],
 \eqa
 and 
 for any $\xi$ in the half-plane $\{\Im\xi<\lp\}$ and any $\omp\in (\Im\xi,\lp)$,
\beqa\label{phim1}
 \phimq(\xi)&=&\exp\left[\frac{1}{2\pi i}\int_{\Im\eta =\omp}\frac{\xi\ln((1-q)/(1-q\Phi(\eta)))}{\eta(\xi-\eta)}d\eta\right].
 \eqa
   \end{enumerate}
\end{prop}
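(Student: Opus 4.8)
The plan is to obtain (a) as a mild perturbation of the elementary bound $|\Phi|\le 1$ on $\bR$, and then to derive the two integral representations in (b) by writing the logarithm of $(1-q)/(1-q\Phi)$ as the sum of its two Wiener-Hopf summands and recovering each summand from a once-subtracted Cauchy (Plemelj) integral; the proposition only restates \cite[Prop.~3.2]{EfficientDiscExtremum}, so no genuinely new ingredient is needed.

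For (a) I would invoke the classical fact that analytic continuation of the characteristic function $\Phi$ to $S_{(\mum,\mup)}$ forces $\bE[e^{-\tau Y}]<\infty$ and $\Phi(\xi+i\tau)=\bE[e^{i\xi Y-\tau Y}]$ for $\tau\in(\mum,\mup)$, so that $|\Phi(\xi+i\tau)|\le\Phi(i\tau)$ with $\tau\mapsto\Phi(i\tau)$ convex and $\Phi(0)=1$. Fixing $q\in(0,1)$ and choosing $\mum\le\lm<\lp\le\mup$ close enough to $0$ that $q\Phi(i\tau)\le 1-c$ on $[\lm,\lp]$ for some $c>0$ (note $\lm=0$ is always admissible), one gets, for $\xi\in S_{(\lm,\lp)}$,
\[
\Re(1-q\Phi(\xi))\ \ge\ 1-q|\Phi(\xi)|\ \ge\ 1-q\Phi(i\,\Im\xi)\ \ge\ c,
\]
which is \eq{eq:boundPhiq}; along the way $1-q\Phi$ is also bounded above on each closed substrip, so $f(\xi):=\ln\big((1-q)/(1-q\Phi(\xi))\big)$ (principal branch) is analytic and bounded on every $S_{(\lm',\lp')}$ with $\lm<\lm'<\lp'<\lp$, and $f(0)=0$.

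For (b) I would use the standard facts (recalled in the references cited before the proposition) that the Wiener-Hopf factors $\phipq(\xi)=\bE[e^{i\xi\barX_{T_q}}]$ and $\phimq(\xi)=\bE[e^{i\xi\uX_{T_q}}]$ are analytic, bounded, and bounded away from $0$ in $\{\Im\xi>\lm\}$, resp. $\{\Im\xi<\lp\}$, take the value $1$ at $\xi=0$, and satisfy \eq{whf1}; hence $f_\pm:=\ln\phi^\pm_q$ are analytic and bounded in the respective half-planes, vanish at $\xi=0$, and $f_++f_-=f$ on $S_{(\lm,\lp)}$. Now fix $\xi$ with $\Im\xi>\lm$ and $\omm\in(\lm,\Im\xi)$, and split $\frac1{2\pi i}\int_{\Im\eta=\omm}\frac{\xi f(\eta)}{\eta(\xi-\eta)}\,d\eta=\frac1{2\pi i}\int_{\Im\eta=\omm}\frac{\xi(f_+(\eta)+f_-(\eta))}{\eta(\xi-\eta)}\,d\eta$. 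The once-subtracted kernel $\xi/(\eta(\xi-\eta))$ decays like $|\eta|^{-2}$ along the line, and the apparent pole at $\eta=0$ is cancelled by $f_\pm(0)=0$, so both integrals converge absolutely; closing the $f_+$-integral in the upper half-plane picks up only the residue at $\eta=\xi$ (the arc at infinity vanishes since $f_+$ is bounded and the kernel is $O(R^{-2})$), while closing the $f_-$-integral in the lower half-plane contributes zero. This yields $\ln\phipq(\xi)=f_+(\xi)=-\frac1{2\pi i}\int_{\Im\eta=\omm}\frac{\xi f(\eta)}{\eta(\xi-\eta)}\,d\eta$, which is \eq{phip1}; Cauchy's theorem inside $S_{(\lm,\lp)}$ shows the right-hand side is independent of the admissible $\omm$. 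The representation \eq{phim1} for $\phimq$ is obtained symmetrically, closing contours on the other side.

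The main substantive input is the list of properties of $\phi^\pm_q$ used at the start of part (b) --- analyticity, boundedness and non-vanishing on half-planes reaching slightly past $\bR$ --- which is the classical Wiener-Hopf theory for the random walk killed at the geometric time $T_q$ (Baxter-Spitzer type formulas), together with the classical identification of the strip of regularity of a characteristic function used in part (a). Once these standard facts are in hand, everything else is residue calculus and elementary estimates; the only point requiring a little care is that all the contour integrals must be handled via the once-subtracted kernel, since $f$ is merely bounded (indeed $f(\xi)\to\ln(1-q)\neq 0$ as $\xi\to\infty$ along $\bR$ when $\Phi$ vanishes at infinity) rather than decaying.
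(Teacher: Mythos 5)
The paper does not actually prove this proposition: it is quoted verbatim from \cite[Prop.~3.2]{EfficientDiscExtremum}, with only a pointer to the standard references for the Wiener--Hopf factorization, so there is no internal proof to compare against. Your reconstruction is the standard argument and is correct: part (a) from $|\Phi(\xi)|\le\Phi(i\Im\xi)$ together with continuity of $\tau\mapsto\Phi(i\tau)$ at $0$, and part (b) by splitting $\ln\big((1-q)/(1-q\Phi)\big)=\ln\phipq+\ln\phimq$ and evaluating each piece of the once-subtracted Cauchy integral by closing the contour in the appropriate half-plane, the subtraction at $\eta=0$ being harmless because all three logarithms vanish there. Two small points worth making explicit: the boundedness of $\ln\phi^\pm_q$ on the half-planes $\{\Im\xi\ge\omega\}$, $\omega>\lm$ (resp.\ $\{\Im\xi\le\omega\}$, $\omega<\lp$), which you import from Baxter--Spitzer theory, is exactly where the choice of $\lm,\lp$ in part (a) (i.e.\ $q\Phi(i\tau)\le 1-c$) is used, so it is worth tying the two parts together; and the statement's ``any $\omm\in(\lm,\Im\xi)$'' implicitly requires $\omm<\lp$ (so that the logarithm in the integrand is defined), a restriction your argument in effect assumes --- this is an imprecision inherited from the cited source, not a gap in your proof.
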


The following proposition is a refined form of Proposition  \ref{prop_WHF}. The proof is a straightforward modification
of the proof of Proposition  \ref{prop_WHF}, in the same vein as the analog of Proposition  \ref{prop_WHF} for L\'evy processes in \cite{NG-MBS}
is refined in \cite{Contrarian}.
\begin{prop} \label{rem:wide_disc} Let Condition $(\psi,\infty)$ hold. Then, 
for any $[\lm,\lp]\subset(\mum\mup)$ and $\ga\in (0,\pi/2)$, there exist $r_0>0$ and $c>0$  such that for all $q\in \cD(0,r_0)$
and  $\xi\in i(\lm,\lp)+(\cC_\ga\cup(-\cC_\ga)\cup\{0\}$, 
\begin{enumerate}[(a)]
\item
\bbe\label{eq:boundPhiq_rec}
1-q\Phi(\xi)\not\in (-\infty,0] \quad \text{and}\quad |1-q\Phi(\xi)|\ge c;
\ee
\item\eq{phip1} and \eq{phim1} hold;
\item
$\phipq(\xi)$ (resp., $\phimq(\xi)$) admits analytic continuation to the lower half-plane 
with the cut $i(-\infty,\mum]$
(resp., to the upper half-plane 
with the cut $i[\mup,+\infty)$)
by
\beqa\label{phip1rec}
\phipq(\xi)&=&\frac{1-q}{(1-q\Phi(\xi))\phimq(\xi)},\\ \label{phim1rec}
\phimq(\xi)&=&\frac{1-q}{(1-q\Phi(\xi))\phipq(\xi)}.
\eqa

\end{enumerate}
\end{prop}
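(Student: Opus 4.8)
## Proof proposal for Proposition \ref{rem:wide_disc}

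The plan is to follow the strategy of Proposition \ref{prop_WHF}, but to upgrade the conclusions from a single horizontal strip to the wider region $i(\lm,\lp)+(\cC_\ga\cup(-\cC_\ga)\cup\{0\})$, exploiting the strong decay and analyticity in Condition $(\psi,\infty)$. First I would establish part (a). Since $\psi$ admits analytic continuation to $\bC\setminus i((-\infty,\mum]\cup[\mup,+\infty))$ and, by \eq{eq:as_exp_psi} with $\Re d_0>0$, $d_0>0$ and $\nu_0>0$, we have $\Re\psi(\xi)\to+\infty$ as $\xi\to\infty$ in any cone $\cC_\ga\cup(-\cC_\ga)$; hence $|\Phi(\xi)|=e^{-\Re\psi(\xi)}\le C e^{-c_1|\xi|^{\nu_0}}$ on such cones, shifted vertically by any amount in $(\mum,\mup)$. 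Combined with the fact that $\Phi(\xi)\neq 0$ and $|\Phi(\xi)|$ is bounded on the compact portion of $i[\lm,\lp]+(\overline{\cC_\ga}\cup(-\overline{\cC_\ga})\cup\{0\})$ near the imaginary axis, we get $\sup|\Phi(\xi)|=:M_0<\infty$ over the whole region. Taking $r_0<1/(2M_0)$ (and shrinking further if needed to stay away from the negative real axis) gives $|1-q\Phi(\xi)|\ge 1-r_0M_0\ge 1/2=:c$ and $1-q\Phi(\xi)\notin(-\infty,0]$, which is \eq{eq:boundPhiq_rec}.

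Next, for part (b), the Cauchy-type integral representations \eq{phip1}, \eq{phim1} are derived exactly as in Proposition \ref{prop_WHF}: fix $q\in\cD(0,r_0)$, pick horizontal lines $\{\Im\eta=\omm\}$ and $\{\Im\eta=\omp\}$ inside $(\lm,\lp)$ and apply the standard Wiener-Hopf factorization of $\ln((1-q)/(1-q\Phi(\eta)))$, which is legitimate because this logarithm is well-defined (by part (a), its argument avoids $(-\infty,0]$) and decays like $|\Re\psi|$, i.e. faster than any power, along horizontal lines. The only new point relative to Proposition \ref{prop_WHF} is that here $q$ ranges over a disc $\cD(0,r_0)$ rather than just $(0,1)$; but analyticity in $q$ is immediate since the integrand in \eq{phip1}, \eq{phim1} is analytic in $q$ and the integral converges locally uniformly.

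Part (c) is where the extra room in the region pays off, and I expect it to be the main technical step. The representation \eq{phip1} exhibits $\phipq(\xi)$ as analytic in the half-plane $\{\Im\xi>\omm\}$; moving $\omm$ down to any level in $(\lm,\lp)$ extends analyticity to $\{\Im\xi>\lm\}$. To push further, into the lower half-plane minus the cut $i(-\infty,\mum]$, one uses \eq{phip1rec}: the right-hand side $\dfrac{1-q}{(1-q\Phi(\xi))\phimq(\xi)}$ is well-defined and analytic wherever $\Phi$ is analytic, $1-q\Phi(\xi)\ne0$, and $\phimq(\xi)\ne0$. By the symmetric version of \eq{phim1}, $\phimq$ is analytic and zero-free on $\{\Im\xi<\lp\}$, in particular on the lower half-plane with the cut $i(-\infty,\mum]$ removed; and $1-q\Phi(\xi)$ is zero-free there by part (a) applied on that region (here Condition $(\psi,\infty)$ guarantees $\Phi$ extends analytically past the real axis into $\bC\setminus i((-\infty,\mum]\cup[\mup,+\infty))$). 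Hence \eq{phip1rec} provides the analytic continuation of $\phipq$ to the lower half-plane with the cut, the two definitions agreeing on the overlap strip $i(\lm,\lp)+\cdots$ by the Wiener-Hopf identity \eq{whf1}. The argument for $\phimq$ via \eq{phim1rec} is the mirror image. The delicate point to check carefully is that the shrinking of $r_0$ needed to make \eq{eq:boundPhiq_rec} hold on the \emph{full} lower (resp. upper) half-plane with the cut is compatible with the decay estimate $|\Phi(\xi)|\le Ce^{-c_1|\Re\xi|^{\nu_0}}$: because $\Phi$ stays bounded there (it decays in the real direction and is controlled near $i\bR$ away from the cuts), a single $r_0$ works, so no contradiction arises. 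This mirrors the refinement of the L\'evy-process analog in \cite{Contrarian} over \cite{NG-MBS}, and no genuinely new idea beyond careful bookkeeping of the regions is required.
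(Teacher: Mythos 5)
Your overall route is exactly the one the paper indicates: its "proof" is just the remark that one modifies the proof of Proposition \ref{prop_WHF} in the way the L\'evy-process analogue is refined in \cite{Contrarian}, and your parts (a) and (b) flesh that out in the intended way (one small slip in (b): $\ln((1-q)/(1-q\Phi(\eta)))$ does not decay along horizontal lines -- it tends to $\ln(1-q)\neq 0$; convergence in \eq{phip1}--\eq{phim1} comes from the kernel $\xi/(\eta(\xi-\eta))$, not from decay of the logarithm).

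The genuine gap is in (c). You justify zero-freeness of $1-q\Phi(\xi)$ on the whole lower half-plane minus the cut $i(-\infty,\mum]$ "by part (a) applied on that region" and by the assertion that "$\Phi$ stays bounded there". Part (a) concerns only $\xi\in i(\lm,\lp)+(\cC_\ga\cup(-\cC_\ga)\cup\{0\})$, and Condition $(\psi,\infty)$ controls $\psi$ only in cones about the real axis; it gives no bound on $\Phi$ near the vertical directions. Boundedness in fact fails in general: already for increments with a Brownian component ($\nu_0=2$, $\psi(\xi)=\frac{\sg^2}{2}\xi^2-i\mu\xi+\cdots$) one has $\Re\psi(\xi)\to-\infty$, hence $|\Phi(\xi)|\to\infty$, along rays with $\arg\xi\in(-3\pi/4,-\pi/4)$, so for every $q\neq 0$ the function $1-q\Phi$ has zeros in the cut lower half-plane and \eq{phip1rec} yields only a meromorphic continuation, analytic precisely where $1-q\Phi\neq 0$ -- in particular on the strip-plus-cone region covered by (a), which is all that is used later (the curves $\cL(\de_\pm,u_\pm)$ lie in such sets). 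So either restrict the continuation claim to the zero-free region or add an explicit hypothesis giving $\sup|\Phi|<\infty$ on the cut half-plane; the sentence "no contradiction arises" does not substitute for this. A related caveat, shared with the statement itself, concerns (a): by \eq{eq:as_exp_psi}, $\Re\psi(\xi)\approx d_0|\xi|^{\nu_0}\cos(\nu_0\arg\xi)+\mu\Im\xi$ on the cone, so the uniform growth you assert for every $\ga\in(0,\pi/2)$ actually requires $\nu_0\ga<\pi/2$ (automatic for $\nu_0<1$, where $\mu=0$, but false, e.g., for $\nu_0$ near $2$ and $\ga>\pi/4$) together with control of the drift term; your write-up should state this restriction rather than claim blanket decay of $|\Phi|$ in every such cone.
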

Assume that Conditions ($G;\be$) and ($\hG;a$) hold. We take $\omm<-\be$ such that both $\Phi$ and $\hG$ are analytic in $S_{[\omm,-\be]}$, and \eq{eq:boundPhiq_rec} is satisfied. Then, for $\eta\in\{\Im\eta=\omm\}$, we calculate   
\beqa\nonumber
\widehat{(\bfo_{[h,+\infty)}\phimq(D) G)}(\eta)&=&\int_h^{+\infty}dy\, e^{-iy\eta}\frac{1}{2\pi}\int_{\Im\xi=-\be}
d\xi\, e^{iy\xi}e^{-ia\xi}\phimq(\xi)\hG_0(\xi)\\\label{hV12}
&=&\frac{1}{2\pi}\int_{\Im\xi=-\be}\frac{e^{-i\eta h}}{i(\eta-\xi)}e^{i(h-a)\xi}\phimq(\xi)\hG_0(\xi)d\xi.
\eqa
(It is easy to see that the double integral converges absolutely, hence, the Fubini theorem is applicable.)
Substituting \eq{hV12} into  \eq{eq:V1Gh2}, we obtain
\beqa\label{eq:V1Gh3}
 V_1(G,h;n,x)
 &=&\frac{1}{2\pi i}\int_{|q|=r}dq\, q^{-n}\frac{1}{2\pi}\int_{\Im\eta=\omm}d\eta\, \frac{e^{i(x-h)\eta}\Phi(\eta)}{(1-q\Phi(\eta))\phimq(\eta)}\\\nonumber
 &&\times \frac{1}{2\pi}\int_{\Im\xi=-\be}\frac{e^{i(h-a)\xi}}{i(\eta-\xi)}\phimq(\xi)\hG_0(\xi)d\xi.
 \eqa
 
 \subsection{Representations of the Wiener-Hopf factors and the RHS of \eq{eq:V1Gh3} in terms of the Hilbert transform}\label{f-Hilbert}
The formulas in this subsection  are valid for wide classes of functions $\Phi$  (not necessarily analytic in a strip) and pdo operators $1-q\Phi(D)$; the functions may be unrelated to
Probability. See e.g., \cite{eskin}.
For $\xi$ on the line $\{\Im\xi=-\be\}$, we pass to the limit $\omm\uparrow -\be$ in \eq{phip1} and $\omp\downarrow -\be$ 
in \eq{phim1} and obtain
 \beqa\label{phip1vp}
 \phipq(\xi)&=&\left(\frac{1-q}{1-q\Phi(\xi)}\right)^{1/2}\exp\left[-\frac{1}{2\pi i}\mathrm{v.p.}
 \int_{\Im\eta =-\be}\frac{\xi\ln((1-q)/(1-q\Phi(\eta)))}{\eta(\xi-\eta)}d\eta\right]\\
 \label{phim1vp}
 \phimq(\xi)&=&\left(\frac{1-q}{1-q\Phi(\xi)}\right)^{1/2}\exp\left[\frac{1}{2\pi i}\mathrm{v.p.}
 \int_{\Im\eta =-\be}\frac{\xi\ln((1-q)/(1-q\Phi(\eta)))}{\eta(\xi-\eta)}d\eta\right]
 \eqa
 In \eq{eq:V1Gh3}, we change the order of integration w.r.t. $\eta$ and $\xi$, then pass to the limit $\omm\uparrow -\be$,
 and obtain 
 \beqa\label{eq:V1Gh3vp}
&& V_1(G,h;n,x)\\\nonumber
 &=&\frac{1}{2\pi i}\int_{|q|=r}dq\, q^{-n}\left\{\frac{1}{4\pi}\int_{\Im\xi=-\be}\frac{e^{i(x-a)\xi}\Phi(\xi)}{1-q\Phi(\xi)}\hG_0(\xi)d\xi\right.
 \\\nonumber
&&+
  \frac{1}{2\pi}\int_{\Im\xi=-\be}d\xi\,e^{i(h-a)\xi}\phimq(\xi)\hG_0(\xi)
  \frac{1}{2\pi i}\mathrm{v.p.}\int_{\Im\eta=-\be}d\eta\, \frac{e^{i(x-h)\eta}\Phi(\eta)}{(1-q\Phi(\eta))\phimq(\eta)(\eta-\xi)}.
 \eqa
 Formulas \eq{phip1vp}, \eq{phim1vp} and \eq{eq:V1Gh3vp} allows one to apply the fast Hilbert transform but are inefficient for
 numerical realizations due to the low decay of the integrands. See \cite{EfficientDiscExtremum}. Below, we use these formulas to
 justify the efficient $Z$-inversion in the case of symmetric L\'evy processes.  
 
\subsection{The case of symmetric L\'evy processes}\label{ss:bar_symm_Levy}
Assume that $\be$ in Condition ($G;\be$) can be chosen so that $\Phi(\xi-i\be)=\Phi(-\xi-i\be), \forall\ \xi\in\bR$. Then
the integrands on the RHS' of \eq{phip1vp}, \eq{phim1vp}, the first term on the RHS of \eq{eq:perp_barr_3} and the RHS of \eq{eq:V1Gh3vp} are analytic functions of $q\in\bC\setminus [1/\Phi(-i\be),+\infty)$ because $\Phi(\xi), \Phi(\eta)>0$
for  $\xi,\eta\in\bR$. The function  $\tV(q,x)=(I-qp_{(-\infty,h_+)}\Phi_\be(D)e_{(-\infty,h_+)})^{-1}G(x)$ is an analytic function of  
$q\in\bC\setminus [1/\Phi(-i\be),+\infty)$, hence, $\tV(q,x)$ is given by \eq{eq:perp_barr_3} and \eq{eq:V1Gh3vp}
for all $q\in\bC\setminus [1/\Phi(-i\be),+\infty)$. We make the sinh-deformation of the contour of integration in
the standard $Z$-inversion formula, choose the parameters of the deformation and simplified trapezoid rule using the general
results of Section \ref{s:Levy_symmetric}, and then evaluate each term in the simplified trapezoid rule making the sinh-change of variables
in the integrals w.r.t. $\eta$ and $\xi$, as in \cite{EfficientDiscExtremum}.

 \subsection{The case of asymmetric L\'evy processes}\label{ss:suff_barrier_eff_Z}

  In the first case that we consider, we need two contours $\cL(\de_\pm, u_\pm)\in \cT_{\mathrm{ext}}$, $\cL(\de_-,u_-)\prec \cL(\de_+;u_+)$, in the $\eta$-  and $\xi$--spaces in the pricing formula. We use the same contours $\cL(\de_+;u_+)$ (resp., $\cL(\de_-,u_-)$) to calculate the Wiener-Hopf factors on $\cL(\de_-,u_-)$ (resp., $\cL(\de_+,u_+))$.

\begin{lem}\label{lem:bounds_phipmq01} Let Conditions $(\psi,\infty)$  and $(\bV,\infty)$ hold. Then
\begin{enumerate}[(a)]
\item
For any  $r\in (0,1)$, there exist $\de^*\in(0,\pi/2)$ and $d>0$ such that, for any $\de_\pm\in (-\de^*,\de^*)$, $u_\pm\in (-\be-d,-\be+d)$, $q\in \cD(0, re^{\psi(-i\be)})$ and $\cL(\de_\pm, u_\pm)\in \cT_{\mathrm{ext}}$ such that
$\cL(\de_-,u_-)\prec \cL(\de_+,u_+)$, the following statements hold:
\begin{enumerate}[(i)]
\item
for any $\xi$ above $\cL(\de_-,u_-)$, 
\bbe\label{phipq_def_p}
\phipq(\xi)=\exp\left[-\frac{1}{2\pi i}\int_{\cL(\de_-,u_-)}\frac{\xi\ln((1-q)/(1-qe^{-\psi(\eta)}))}{\eta(\xi-\eta)}d\eta\right];
\ee
\item
for any $\xi$ below $\cL(\de_+,u_+)$, 
\bbe\label{phimq_def_m}
\phimq(\xi)=\exp\left[\frac{1}{2\pi i}\int_{\cL(\de_+,u_+)}\frac{\xi\ln((1-q)/(1-qe^{-\psi(\eta)}))}{\eta(\xi-\eta)}d\eta\right];
\ee

\item
for $\xi$ in the minimal simply connected region containing
$\{\Im\xi=-\be\}$ and $\cL(\de_\pm ,u_\pm)$, 
\bbe\label{eq:boundphimp_reciprocal1}
 |\phi^\pm_q(\xi)|+|1/\phi^\pm_q(\xi)|\le C,
 \ee
 where $C$ is independent of $q$ and $\xi$.
 \end{enumerate}
 \item
 For any $r\in (0,1)$ and $\ga\in (0,\pi)$, there exist $\de^*\in (0,\ga)$,  and $d>0$  such that
 for any $\de_\pm\in (-\de^*,\de^*)$, $u_\pm\in (-\be-d,-\be+d)$, $q\in \cD(0, re^{\psi(-i\be)})\cup (-\cC_{\pi-\ga})$ and $\cL(\de_\pm, u_\pm)\in \cT_{\mathrm{ext}}$ such that
$\cL(\de_-,u_-)\prec \cL(\de_+,u_+)$, the following statements hold:
\begin{enumerate}[(i)]
\item
for any $\xi$ below $\cL(\de_+,u_+)$, $\phimq(\xi)$ is given by \eq{phimq_def_m};
\item
for any $\xi$ above $\cL(\de_-,u_-)$, $\phipq(\xi)$ is given by \eq{phipq_def_p};
\item
for any $B>3$, there exist $C,c>0$ such that for $\xi\in \cL(\de_\pm ,u_\pm)$, 
\bbe\label{eq:boundphimp_reciprocal2}
 |\phi^\pm_q(\xi)|+|1/\phi^\pm(\xi)|\le C(1+|q|)^B(1+|\xi|)^{c\de^*},
 \ee
 where $c, C>0$ are independent of $q\in \cD(0, re^{\psi(-i\be)})\cup (-\cC_{\pi-\ga}), \xi\in \cL(\de_\pm ,u_\pm)$, and $c$ is independent of
 $\de^*$ as well. 
 \end{enumerate}
\end{enumerate}
\end{lem}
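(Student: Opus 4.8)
The starting point is Proposition \ref{prop_WHF}, refined by Proposition \ref{rem:wide_disc}, which represents $\phipq$ and $\phimq$ as exponentials of integrals over \emph{horizontal} lines $\{\Im\eta=\omm\}$, $\{\Im\eta=\omp\}$; the plan is to deform these lines onto the curves $\cL(\de_-,u_-)$ and $\cL(\de_+,u_+)$, and then read off the bounds by estimating the exponents, using the geometry of the curves supplied by Lemma \ref{lem:extended curves_0} and the decay of $\Phi$ supplied by Condition $(\psi,\infty)$. I would first fix $\de^*$: for part (a) any small $\de^*\in(0,\pi/2)$ such that, for $q\in\cD(0,re^{\psi(-i\be)})$ and $w\in\cC_{\de^*}$ of bounded modulus, $1-qw\notin(-\infty,0]$; for part (b), arguing as in Theorem \ref{thm: sep from 0}(a), any $\de^*\in(0,\ga)$ works, since then $q\in-\cC_{\pi-\ga}$ and $w\in\cC_{\de^*}$ give $qw\in-\cC_{\pi-(\ga-\de^*)}$, whose argument is bounded away from $0$, whence $1-qw\notin(-\infty,0]$ and $|1-qw|\ge c>0$. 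By the construction of $\cL(\de;u)\in\cT_{\mathrm{ext}}$ in Section \ref{sss:constr_def} one has $\Phi(\eta)\in\cC_{\de^*}$ on the curves, and (after $d$ is made small and $u_\pm$ taken close to $-\be$) in the region swept during the deformation, so $\ln((1-q)/(1-q\Phi(\eta)))$ is single-valued and analytic there; and by \eq{eq:as_exp_psi} one has $|\Phi(\eta)|\le Ce^{-d|\Re\eta|^{\nu_0}}$ on $\cL(\de_\pm,u_\pm)$, so that $\ln((1-q)/(1-q\Phi(\eta)))\to\ln(1-q)$ and, since by \eq{eq:as_y_de0} the curves are asymptotically almost horizontal ($\barnu+1-\nu_0<1$), the arcs joining the line to the curve at $|\Re\eta|=R$ contribute $o(1)$. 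This yields \eq{phipq_def_p}, \eq{phimq_def_m}; the identities \eq{phip1rec}, \eq{phim1rec}, which hold on the whole region between the two curves by analytic continuation of \eq{whf1} off $\bR$, then define $\phi^\pm_q$ everywhere on that region and reduce each statement about $1/\phi^\pm_q$ to one about $\phi^\mp_q$.

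For the bounds in part (a), fix $\xi$ in the region bounded by $\cL(\de_-,u_-)$ and $\cL(\de_+,u_+)$ and estimate the exponent in \eq{phipq_def_p} after the partial-fraction split $\xi/(\eta(\xi-\eta))=1/\eta+1/(\xi-\eta)$. On the tail $|\Re\eta|\ge 2|\Re\xi|$ of $\cL(\de_-,u_-)$ one has $|\xi-\eta|\ge|\Re\eta|/2$, and the non-decaying part of the logarithm, $\ln(1-q)$, together with the symmetry of $\cL$ under $\eta\mapsto-\bar\eta$, gives a cancellation of the two ends of $\int_{\cL}(1/\eta+1/(\xi-\eta))\,d\eta$ (this is the mechanism already visible in \eq{phip1vp}), while the exponentially small remainder $-\ln(1-q\Phi(\eta))$ gives an absolutely convergent contribution; on the complementary bounded part, $|1-q\Phi(\eta)|\ge c$, $|\Phi(\eta)|\le C$, and the separation estimate \eq{eq:dist_cL_dem} controls $1/|\xi-\eta|$. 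Hence the exponent is $O(1)$ uniformly and $|\phipq(\xi)|\le C$; $|\phimq(\xi)|\le C$ follows the same way from \eq{phimq_def_m}. Finally $|1/\phipq(\xi)|=|1-q\Phi(\xi)|\,|\phimq(\xi)|/|1-q|\le C$ because $|1-q\Phi(\xi)|\le C$ on the region and $|1-q|\ge c$ on $\cD(0,r)$, and similarly for $1/\phimq$; this is \eq{eq:boundphimp_reciprocal1}.

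For part (b) the deformation is justified identically with $\de^*<\ga$, and there are exactly two sources of growth in the exponent of \eq{phipq_def_p}: (i) the constant $\ln(1-q)$, which produces a factor $(1-q)^{\theta(\xi)}$ with $|\theta(\xi)|\le C$ uniformly, hence $\le C(1+|q|)^{C}$; and (ii) the portion of $\cL(\de_\pm,u_\pm)$ on which $|q\Phi(\eta)|\gtrsim 1$, i.e.\ $|\Re\eta|\lesssim (d^{-1}\ln(1+|q|))^{1/\nu_0}$, on which $|\ln(1-q\Phi(\eta))|\le \ln(1+|q|)+|\psi(\eta)|+O(1)\le C\ln(1+|q|)$ but $|\arg(1-q\Phi(\eta))|\le C\de^*$, because $\Phi(\eta)\in\cC_{\de^*}$ there. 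Carrying out the $1/(\xi-\eta)$ integration over this interval, of length $O((\ln(1+|q|))^{1/\nu_0})$, against the separation estimate \eq{eq:dist_cL_dem} produces, on exponentiating, a factor $(1+|\xi|)^{c\de^*}$ with $c$ depending only on $\nu_0$ and on the geometry of $\cL$ --- in particular \emph{independent of} $\de^*$ --- times powers of $\ln(1+|q|)$, which are absorbed into $(1+|q|)^B$ for any $B>3$ (indeed any $B>0$, the true dependence on $q$ being poly-logarithmic). Combining with $|1-q\Phi(\xi)|\le C(1+|q|)$ and $|1-q|\ge c$ on $\cD(0,re^{\psi(-i\be)})\cup(-\cC_{\pi-\ga})$, the identities \eq{phip1rec}, \eq{phim1rec} give \eq{eq:boundphimp_reciprocal2} for the reciprocals at the cost of at most doubling $B$.

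The step I expect to be the main obstacle is the last one: making the estimates of the exponential integrals uniform simultaneously in $q$ over the unbounded set $\cD(0,re^{\psi(-i\be)})\cup(-\cC_{\pi-\ga})$ and in $\xi$ along the unbounded curves $\cL(\de_\pm,u_\pm)$, while keeping the exponent $B$ as small as any prescribed bound $>3$ and, above all, producing $c$ \emph{independent of} $\de^*$. This is exactly where the full strength of the asymptotic expansion \eq{eq:as_exp_psi} and of the separation estimate \eq{eq:dist_cL_dem} is needed; the computation parallels the refinement of the Wiener--Hopf bounds carried out in \cite{Contrarian} on top of \cite{NG-MBS}, and the contour-deformation estimates in \cite{EfficientDiscExtremum}.
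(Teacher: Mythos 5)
Your overall strategy --- deform the Wiener--Hopf representations of $\phipq$, $\phimq$ onto $\cL(\de_\pm,u_\pm)$, bound the real part of the exponent, and handle the reciprocals via \eq{phip1rec}--\eq{phim1rec} --- is the same as the paper's. For part (a) the paper does not redo any estimates: it cites \cite{EfficientDiscExtremum} for (i)--(iii) on any pair of curves inside a cone $i(-\be-d,-\be+d)+(\cC_{\ga'}\cup(-\cC_{\ga'})\cup\{0\})$ and only observes that for $\de^*$ small the curves $\cL(\de_\pm,u_\pm)$ lie in such a cone; your direct sketch of (a) is consistent with that.

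The gap is in part (b), which is the substance of the lemma. You assert that the exponent's dependence on $q$ is ``poly-logarithmic'', so that any $B>0$ would do. This is false: on the real line $|\phipq(\xi)\phimq(\xi)|=|1-q|/|1-q\Phi(\xi)|$ is of order $|q|$ wherever $|\Phi(\xi)|$ is small, so at least one factor must grow like a power of $|q|$; and the paper's computation (Section \ref{proof:lem:bounds_phipmq}) shows that $|\Re b^\pm_q(\xi)|$ genuinely picks up roughly $2.5\ln|q|$ from the boundary terms of the integration by parts (the terms $g_{00}$, $g_1$ in \eq{bound_g00}, \eq{eq:bound_Re_b_pm_00} and the $b^{+,n}$-contribution \eq{sum_4_b} over $J^{\pm,2}_q\cup J^{\pm,3}_q$) --- this is precisely why the hypothesis $B>3$ appears in the statement, and your heuristic predicts a bound incompatible with it. Your supporting claim that $|\arg(1-q\Phi(\eta))|\le C\de^*$ on the region where $|q\Phi(\eta)|$ is of order $1$ or larger is also wrong for $q\in-\cC_{\pi-\ga}$: there $\ln(1-q\Phi(\eta))\approx\ln(-q)+\ln\Phi(\eta)$, whose argument is of order $\arg(-q)$, not $\de^*$. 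In the paper the factor $(1+|\xi|)^{c\de^*}$ has a different origin: the terms $g_{01}$, $g_{03}$ carrying $\Im\psi=+\de_-$ on the right half and $-\de_-$ on the left half of the contour symmetric w.r.t.\ $i\bR$, so the two contributions add instead of cancelling (see Remark \ref{rem:asym_contour}). Proving \eq{eq:bound_Re_b_pm} requires the full decomposition of $\cL(\de_-,u_-)$ by the cutoffs $x_{*,\pm}(\eps)$, $x^*_\pm(2,q)$, $x^*_\pm(1/2,q)$ from Proposition \ref{prop:facts}, the integration by parts \eq{int_by_part}, and a case analysis in $|\xi|$ relative to $x^*_+(2,q)$, none of which the proposal supplies; you correctly flag this uniform estimate as the main obstacle, but what you offer in its place is quantitatively incorrect rather than merely incomplete.
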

\begin{proof} (a) It is proved in \cite{EfficientDiscExtremum} that (1) for any $r\in (0,1)$,
there exist  $\ga'\in (0,\pi/2)$,  $C>0$, $d>0$  such that the statements (i)-(ii) hold for any $q\in \cD(0, re^{\psi(-i\be)})$ and any
pair of regular curves $\cL^\pm\subset i(-\be-d,-\be+d)+(\cC_{\ga'}\cup (-\cC_{\ga'})\cup\{0\})$ such that $\cL^-\prec\cL^+$,
 and (2)
part (iii) is valid for $\xi\in i(-\be-d,\be+d)+(\cC_{\ga'}\cup (-\cC_{\ga'})\cup\{0\})$. It remains to note that if  $\de^*>0$ is sufficiently small, then
$\cL(\de_\pm,u_\pm)\subset i(-\be-d,\be+d)+(\cC_{\ga'}\cup (-\cC_{\ga'})\cup\{0\})$.

(b) 
 Fix $r$, $\de^*$, $\de_\pm, u_\pm,\ga$ satisfying the conditions in (b),
 introduce $\cU_\pm=(\cD(0, re^{\psi(-i\be)})\cup(-\cC_{\pi-\ga}))\times \cL(\de_\pm,u_\pm)$, and  denote by $b^\pm_q(\xi)$ the  integrals under the exponential signs in \eq{phipq_def_p} and \eq{phimq_def_m}.
It suffices to prove the bound
 \bbe\label{eq:bound_Re_b_pm}
 |\Re b^\pm_q(\xi)|\le C+B\ln(1+|q|)+c\de^*\ln(1+|\xi|),\  (q,\xi)\in \cU_\pm,
 \ee
 where $B>3$ is arbitrary, $C, c>0$ are independent of $(q,\xi)\in \cU_\pm$, and $c$ is independent of $\de^*$ as well;
 $C$  depends on $B$ and $\de^*$.  The rather technical proof of \eq{eq:bound_Re_b_pm} is relegated to Section \ref{proof:lem:bounds_phipmq}. 
 \begin{rem}\label{rem:asym_contour}{\rm
  The last term on the RHS of \eq{eq:bound_Re_b_pm} and the last factor on the RHS of \eq{eq:boundphimp_reciprocal2} appear
 when we derive the bound in the region of $\xi$ large in absolute value. As it is seen from the proof, for these $\xi$, we can use the
 contours on the RHS' of \eq{phipq_def_p}-\eq{phimq_def_m}, symmetric w.r.t. $iu_\mp$ instead of the contours symmetric w.r.t.
 $i\bR$, 
 and then the last term does not appear. In Section \ref{proof:lem:bounds_phipmq}, we will indicate the place in the proof
 where additional asymmetric contours need to be used to eliminate the term $C_2\de^*\ln(1+|\xi|)$.
 
 }
 \end{rem}
 \end{proof}

\begin{thm}\label{thm:suff_barrier_1}
Let the following conditions hold:
 \begin{enumerate}[(i)]
\item
Conditions $(\psi,\infty)$,  $(\bV,\infty)$, $(G;\be)$ and Condition $(\hG;a)$ with $a<h$; 
\item either $(\barnu+1)/2\le \nu_0$ and $p(\de_\pm)\ge 0$ or  $\barnu+1\le \nu_0$ or $\hG_0(\xi)$ 
decays exponentially as $\xi\to \infty$ in a cone around the line $\{\Im\xi=-i\be\}$.

 \end{enumerate}  

Then
\begin{enumerate}[(a)]
\item
for the same $r$, $\de^*$ and $\cL(\de_\pm,u_\pm)$ as in Lemma \ref{lem:bounds_phipmq01} (a), 
\beqa\label{eq:V1Gh6}
 V_1(G,h;n,x)
 &=&\frac{1}{2\pi i}\int_{|q|=re^{\psi(-i\be)}}\frac{dq}{q^{n}} \frac{1}{2\pi}\int_{\cL(\de_-,u_-)}d\eta\, \frac{e^{i(x-h)\eta}\Phi(\eta)}{(1-q\Phi(\eta))\phimq(\eta)}\\\nonumber
 &&\times \frac{1}{2\pi}\int_{\cL(\de_+,u_+)}\frac{e^{i(h-a)\xi}}{i(\eta-\xi)}\phimq(\xi)\hG_0(\xi)d\xi;
 \eqa
\item in addition, let $n>7$. 
Then,
for any $r\in(0,1)$ and $\ga\in (0,\pi/2)$, and $\om_\ell$ and $d_\ell$ satisfying
$\om_\ell-d_\ell>\ga/2$ and $\om_\ell+d_\ell<\pi/4$, there exist
$\de^*\in (0,\ga)$ and $d>0$,
$\sg_\ell\in \bR$ and $b_\ell>0$ such that 
\begin{enumerate}[(i)]
\item
$\chi_{L; \sg_\ell, b_\ell, \om_{\ell}}(0)=re^{\psi(-i\be)}$ and 
$\cL_{L;\sg_\ell, b_\ell, \om_{\ell}}\subset \cD(0,r')\cup (-\cC_{\pi-\ga})$, where $r'\in (r,1)$; 
\item
for any pair of the contours  $\cL(\de_\pm,u_\pm)$  from Lemma \ref{lem:bounds_phipmq01} (b),
\beqa\label{eq:V1Gh7}
 V_1(G,h;n,x)
 &=&\frac{1}{2\pi i}\int_{\cL_{L;\sg_\ell,b_\ell,\om_\ell}}\frac{dq}{q^{n}}\frac{1}{2\pi}\int_{\cL(\de_-,u_-)}d\eta\, 
 \frac{e^{i(x-h)\eta}\Phi(\eta)}{(1-q\Phi(\eta))\phimq(\eta)}\\\nonumber
 &&\times \frac{1}{2\pi}\int_{\cL(\de_+,u_+)}\frac{e^{i(h-a)\xi}}{i(\eta-\xi)}\phimq(\xi)\hG_0(\xi)d\xi.
 \eqa
\end{enumerate}
\end{enumerate}
\end{thm}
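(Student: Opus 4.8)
\smallskip
\noindent
\emph{Proof plan.} The starting point is \eq{eq:V1Gh3}, which already displays $V_1(G,h;n,x)$ as an iterated integral over $q$ (on $|q|=r$, $r$ small), then $\eta$ (on $\{\Im\eta=\omm\}$, $\omm<-\be$), then $\xi$ (on $\{\Im\xi=-\be\}$). To get \eq{eq:V1Gh6} one deforms, for each fixed $q$ in the disc, the inner line $\{\Im\xi=-\be\}$ onto $\cL(\de_+,u_+)$ and the middle line $\{\Im\eta=\omm\}$ onto $\cL(\de_-,u_-)$; to get \eq{eq:V1Gh7} one then deforms the outer circle onto $\cL_{L;\sg_\ell,b_\ell,\om_\ell}$. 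The analytic continuations of the Wiener--Hopf factors from Propositions \ref{prop_WHF} and \ref{rem:wide_disc} and, above all, the uniform bounds of \lemm{lem:bounds_phipmq01}, are the engine of both deformations.

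For part (a), fix $r\in(0,1)$, take $\de^*$, $d$ as in \lemm{lem:bounds_phipmq01}(a), and fix $\de_\pm\in(-\de^*,\de^*)$, $u_\pm\in(-\be-d,-\be+d)$ and curves $\cL(\de_\pm,u_\pm)\in\cT_{\mathrm{ext}}$ with $\cL(\de_-,u_-)\prec\cL(\de_+,u_+)$. For fixed $q$ in the disc I would first deform the inner line onto $\cL(\de_+,u_+)$: the $\xi$-integrand $e^{i(h-a)\xi}\phimq(\xi)\hG_0(\xi)/(i(\eta-\xi))$ is analytic in the swept region, since $\hG_0$ is analytic there by Condition ($\hG;a$) for $\de^*$ small, $\phimq$ continues by \eq{phim1rec} with $|\phimq(\xi)|\le C$ from \eq{eq:boundphimp_reciprocal1}, and the pole $\xi=\eta$ is avoided because $\Im\eta=\omm<-\be$ and, in the limiting configuration, $\cL(\de_-,u_-)\prec\cL(\de_+,u_+)$. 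Then I would deform the middle line onto $\cL(\de_-,u_-)$, where in the swept region $\Phi$ continues by Condition ($\psi,\infty$), $1-q\Phi(\eta)\notin(-\infty,0]$ with $|1-q\Phi(\eta)|\ge c$ by \lemm{lem:bounds_phipmq01}, $1/\phimq(\eta)$ continues by \eq{phim1rec} and stays bounded, and $\eta\ne\xi$ on $\cL(\de_+,u_+)$. The only substantial point is decay at infinity, and this is exactly where hypothesis (ii) is used: by \eq{eq:as_y_de0}, $\Im\xi\sim p(\de_+)|\Re\xi|^{\barnu+1-\nu_0}$ and $\Im\eta\sim p(\de_-)|\Re\eta|^{\barnu+1-\nu_0}$, so in each of the three alternatives of (ii) one checks that the factors $|e^{i(h-a)\xi}|=e^{-(h-a)\Im\xi}$ and $|e^{i(x-h)\eta}|=e^{(h-x)\Im\eta}$ (recall $a<h$, $x<h$) are dominated: when $p(\de_\pm)\ge0$ and $(\barnu+1)/2\le\nu_0$ the first is bounded and the second is killed by $|\Phi(\eta)|\le Ce^{-d|\Re\eta|^{\nu_0}}$ since $\barnu+1-\nu_0<\nu_0$; when $\barnu+1\le\nu_0$ both $\Im\xi,\Im\eta$ stay bounded; and when $\hG_0$ decays exponentially in a cone about $\{\Im\xi=-\be\}$, that decay absorbs any $\exp[O(|\Re\xi|^{\barnu+1-\nu_0})]$, and, propagated through the inner integral, any $\exp[O(|\Re\eta|^{\barnu+1-\nu_0})]$ as well. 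In all cases the integrands are bounded by absolutely integrable functions on the swept regions, the connecting arcs at infinity vanish, and absolute convergence of the resulting iterated integral yields \eq{eq:V1Gh6}; the radius $re^{\psi(-i\be)}=r/\Phi(-i\be)$, $r\in(0,1)$, sits inside the disc of analyticity, consistently with \eq{eq:V1Gh3}.

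For part (b), take $n>7$, $\ga\in(0,\pi/2)$, and $\om_\ell,d_\ell$ with $\om_\ell-d_\ell>\ga/2$, $\om_\ell+d_\ell<\pi/4$; let $\de^*\in(0,\ga)$, $d>0$ be as in \lemm{lem:bounds_phipmq01}(b). Exactly as in \lemm{lem:Z-SINH} one chooses $\sg_\ell\in\bR$, $b_\ell>0$ so that $\chi_{L;\sg_\ell,b_\ell,\om_\ell}(0)=re^{\psi(-i\be)}$ and $\cL_{L;\sg_\ell,b_\ell,\om_\ell}$, together with the bounded region it cuts off near the origin, lies in $\cD(0,r')\cup(-\cC_{\pi-\ga})$, i.e.\ inside the $q$-region of \lemm{lem:bounds_phipmq01}(b); this gives (b)(i). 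On that region the inner double integral $J(q)$ on the right of \eq{eq:V1Gh6} is analytic in $q$: one differentiates under the integral signs, using that $\phimq$ is analytic in $q$ through \eq{phipq_def_p}--\eq{phimq_def_m}, that $\Phi(\eta)\in\cC_{\de^*}$ on $\cL(\de_\pm,u_\pm)$ with $\de^*<\ga$ forces $q\Phi(\eta)$ to have argument bounded away from $0$ — hence $1-q\Phi(\eta)\notin(-\infty,0]$ and $|1-q\Phi(\eta)|\ge c>0$ — and that $\eta\ne\xi$. For the growth of $J(q)$ in $q$ I would combine \eq{eq:boundphimp_reciprocal2}, valid with any $B>3$, giving $|\phimq(\xi)|+|1/\phimq(\eta)|\le C(1+|q|)^{2B}(1+|\xi|)^{c\de^*}(1+|\eta|)^{c\de^*}$, with $|1-q\Phi(\eta)|^{-1}\le C$, the super-exponential decay $|\Phi(\eta)|\le Ce^{-d|\Re\eta|^{\nu_0}}$, $|\hG_0(\xi)|\le C|\xi|^{-1}$, and hypothesis (ii) again (now needed to keep the $\eta$- and $\xi$-integrals convergent uniformly in $q$); the outcome is $|J(q)|\le C(1+|q|)^{2B}$. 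Hence $|q^{-n}J(q)|\le C|q|^{-n}(1+|q|)^{2B}$, and choosing $B$ with $3<B<(n-1)/2$ — possible precisely because $n>7$ — the integrand is $O(|q|^{-n+2B})$ with $-n+2B+1<0$, so the arcs at infinity joining $\{|q|=re^{\psi(-i\be)}\}$ to $\cL_{L;\sg_\ell,b_\ell,\om_\ell}$ contribute nothing and both contour integrals converge; Cauchy's theorem then yields \eq{eq:V1Gh7}.

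The main obstacle is the simultaneous control, in both deformations, of analyticity and of growth at infinity: one must track the analytic continuations of $\phi^\pm_q$, the movable pole $\eta=\xi$, and the cuts of $\Phi$ and $\hG_0$ while the deformed contours run off to infinity, and this is exactly where the three alternatives in hypothesis (ii) pay for the possible exponential growth of $e^{i(h-a)\xi}$ and $e^{i(x-h)\eta}$ against the super-exponential decay of $\Phi$ (or the exponential decay of $\hG_0$), and where the polynomial-in-$q$ bound $(1+|q|)^{B}$, $B>3$, for the Wiener--Hopf factors from \lemm{lem:bounds_phipmq01}(b) forces the threshold $n>7$ in part (b). The genuinely hard analytic input, \lemm{lem:bounds_phipmq01} (and, inside it, the estimate \eq{eq:bound_Re_b_pm}), is taken as given here.
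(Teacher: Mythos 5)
Your overall route is the paper's: start from \eq{eq:V1Gh3}, move the $\eta$- and $\xi$-lines onto $\cL(\de_-,u_-)$ and $\cL(\de_+,u_+)$ using the bounds of Lemma \ref{lem:bounds_phipmq01}(a) and hypothesis (ii) to beat the oscillating factors, then deform the outer circle using Lemma \ref{lem:bounds_phipmq01}(b); your accounting for $n>7$ (two Wiener--Hopf factors, each $O((1+|q|)^B)$ with $B>3$ arbitrary, so $q^{-n}(1+|q|)^{2B}$ is integrable once $n>2B+1$) is exactly the role the threshold plays in the paper. The only structural difference is that the paper imports the identity \eq{eq:V1Gh6} for generic contours with wings pointing down/up from \cite{EfficientDiscExtremum} and concentrates on what changes for the contours $\cL(\de_\pm,u_\pm)$, while you redo the deformation from scratch.

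There are, however, two concrete gaps precisely where the paper's proof does its real work. First, you never control the factor $1/(\eta-\xi)$ quantitatively. The paper flags this as one of the two subtleties: when $\nu_0>1$ the curves $\cL(\de_-,u_-)$ and $\cL(\de_+,u_+)$ have the real axis as a common asymptote, so their distance at abscissa $x$ decays like $|x|^{1-\nu_0}$ (see \eq{eq:dist_cL_dem}), and ``the pole is avoided'' does not by itself give domination by an absolutely integrable function. The paper's argument bounds the double integrand by $C F(\eta,\xi)$, splits the domain into $|\xi|>2(|\eta|+1)$ and $|\xi|\le 2(|\eta|+1)$, and on the near-diagonal part uses \eq{eq:dist_cL_dem} so that the surviving growth $(1+|\eta|)^{\nu_0-1}$ is absorbed by the superpolynomial decay coming from $\Phi(\eta)$, yielding an integral of the form $\int_0^\infty(1+|\eta|)^{-N+\nu_0}d\eta$. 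Something of this kind is needed both for \eq{eq:V1Gh6} itself and for the uniformity in $q$ used in part (b); your proposal asserts absolute integrability without it. Second, in the third alternative of (ii) your phrase that the exponential decay of $\hG_0$, ``propagated through the inner integral,'' also absorbs $\exp[O(|\Re\eta|^{\barnu+1-\nu_0})]$ is not an argument: decay in $\xi$ cannot compensate growth in $\eta$. In the paper the possible growth of $e^{i(x-h)\eta}$ along $\cL(\de_-,u_-)$ is always beaten by $|\Phi(\eta)|\le Ce^{-d_0|\eta|^{\nu_0}(1+o(1))}$ (this is where $(\barnu+1)/2\le\nu_0$ enters, and at equality one must compare coefficients, not just exponents, which is also why $\de_\pm$ and hence $p(\de_\pm)$ can be taken small); the exponential decay of $\hG_0$ is only invoked to neutralize the $\xi$-factor when $p(\de_+)<0$. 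Relatedly, in that same alternative the wings of $\cL(\de_+,u_+)$ may point downward and cross the line $\{\Im\eta=\omm\}$, so your sequential order (move $\xi$ first with $\eta$ frozen on $\{\Im\eta=\omm\}$) can sweep the pole $\xi=\eta$ and pick up residues; the safe formulation, implicit in the paper, is to deform the pair of contours keeping the ordering $\cL(\de_-,u_-)\prec\cL(\de_+,u_+)$ (equivalently, the $\eta$-contour strictly below the $\xi$-contour) throughout the homotopy.
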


\begin{proof} (a)    
It is proved in \cite{EfficientDiscExtremum} that, for any $r\in (0,1)$,
there exist  $\ga'\in (0,\pi/2)$,  $C>0$, $d>0$  such that \eq{eq:V1Gh6} holds for any $q\in \cD(0, re^{\psi(-i\be)})$ and any
pair of regular curves $\cL^\pm\subset i(-\be-d,-\be+d)+(\cC_{\ga'}\cup (-\cC_{\ga'})\cup\{0\})$ such that  $\cL^-$ is below $\cL^+$, 
the wings of $\cL^-$ point down, and the wings of $\cL^+$ point up. In \cite{EfficientDiscExtremum},
the marginally different formula stemming from \eq{eq:V1Gh} was used,  the contours were of a different form.
The proof is based on a bound for the inner double integrand. The bound follows from \eq{eq:boundphimp_reciprocal1}, Conditions ($G;\be$) and ($\hG;a$), and the following two properties of the contours $\cL^\pm$: (1) the distance between the two contours is positive; (2) the oscillating factors decay as $\eta\to\infty$ along $\cL^-$, and $\xi\to\infty$ along $\cL^+$. For the contours $\cL(\de_\pm,u_\pm)$, 
we have two subtleties: the distance is zero if 
$1-\nu_0<0$ (the contours do not intersect but have the real line as the common asymptote), and if $\barnu+1-\nu_0>0$, then one of the oscillating factors
$e^{i(x-h)\eta}$ and $e^{i(h-a)\xi}$  increases (in absolute value) as an exponential of 
$|\eta|^{\barnu+1-\nu_0}$ and
 $|\xi|^{\barnu+1-\nu_0}$, respectively, and the other one decays. See Lemma \ref{lem:extended curves_0}.
 Using Conditions ($\psi,\infty$), ($G;\be$), ($\hG;a$) and Lemma \ref{lem:extended curves_0}, we derive the bound for the inner double integrand on the RHS of \eq{eq:V1Gh6} via $CF(\eta,\xi)$, where
 \[
 F(\eta,\xi)=\frac{e^{-p(\de_-)(x-h)|\eta|^{\barnu+1-\nu_0}(1+\al_1(\eta))}e^{-d_0|\eta|^{\nu_0}(1+\al_2(\eta))}
 e^{-p(\de_+)(h-a)|\xi|^{\barnu+1-\nu_0}(1+\al_3(\xi))}}{|\xi-\eta||\xi|},
 \]
and $\al_1(\eta),\al_2(\eta)\to 0$ as $\eta\to\infty$ and $\al_3(\xi)\to 0$ as $\xi\to\infty$ along $\cL(\de_-,u_-)$ and $\cL(\de_+,u_+)$,
respectively. Under condition $(\barnu+1)/2\le \nu_0$, the product of the first two factors in the denominator decays faster than
$e^{-(d_0/2)|\eta|^{\nu_0}}$ as $\eta\to \infty$, independently of the sign of $p(\de_-)$. The third factor is uniformly bounded  if either $p(\de_+)\ge 0$ or $\barnu+1-\nu_0\le 0$. Hence, under either of these two conditions, $F(\eta,\xi)$ admits a bound
\[
F(\eta,\xi)\le C_N(1+|\eta|)^{-N}|\eta-\xi|^{-1}(1+|\xi|)^{-1},
\]
for any $N$, where $C_N$ is independent of $(\eta,\xi)$. In the region $\{(\xi,\eta)\ |\ |\xi|>2(|\eta|+1)\}$, 
\[
F(\eta,\xi)\le 2C_N(1+|\eta|)^{-N}(1+|\xi|)^{-2},
\]
and the RHS is absolutely integrable if $N>1$, and the integral w.r.t. $\xi$ over $\{\xi\ |\ |\xi|\le 2(|\eta|+1)\}$ is bounded by
\[
C_{1N}\int_0^{+\infty}(1+|\eta|)^{-N+1+\nu_0-1}d\eta.
\]
This integral converges if $N>\nu_0+1$. Since $\de^*>0$ and $d>0$ are arbitrary, the same argument shows that the inner double integral
is absolutely convergent in the process of the deformation of the initial contours $\{\Im\eta=\omm\}$ and $\{\Im\xi=-\be\}$ into $\cL(\de_-,u_-)$ and $\cL(\de_+,u_+)$, respectively, and is bounded by a constant independent of $q\in \cD(0, re^{\psi(-i\be)})$.

Finally, if $\barnu+1-\nu_0>0$ and $p(\de_+)<0$ but $\hG_0(\xi)$ exponentially decays, then the inner double integrand on the 
RHS of \eq{eq:V1Gh6} decays faster than any polynomial of $(\eta,\xi)$, 
with the constant in the bound independent of $q\in \{|q|\le r_0\}$, hence, the deformation is justified.
This finishes the proof of (a).

(b) Now we deform the outer contour in \eq{eq:V1Gh6} using Lemma \ref{lem:bounds_phipmq01} (b). The justification 
of the absolute convergence of the inner double integral in the process of deform is essentially the same as in (a).
The additional important element is the remark that, in the process of deformation,
$1-q\Phi(\eta)\not\in (-\infty,0]$ and $1/(1-q\Phi(\eta))$ is uniformly bounded.\end{proof}

Now we consider the case 
 $\nu_0\le (\barnu+1)/2$. We modify the statement and  proof of
  Theorem \ref{thm:suff_barrier_1} in the same vein as 
  Theorem \ref{thm:Euro-non-symm2} modifies Theorem \ref{thm:Euro-non-symm1}. Additional subtleties are
  as follows.
  \begin{enumerate}[(1)]
  \item
  Instead of one contour $\cL^*(x^*,\de,u)$ in Theorem \ref{thm:Euro-non-symm2}, we need two contours $\cL^*(x^*;\de_\pm,u_\pm)$, with 
  $\de_-<0<\de_+$ and $u_-<-\be<u_+$;
  \item
  We use \eq{eq:boundphimp_reciprocal2} for $\xi\in \cL^*(x^*, \de_\pm ,u_\pm)$. In the proof of the modified bound
  \eq{eq:boundphimp_reciprocal2}, we use \eq{phimq_def_m}  and \eq{phipq_def_p}, without  changing the contours of integration. Since $\de_-<0<\de_+$ and $|x|^{1-\nu_0}$
 increases as $x\to\pm\infty$, the proof of \eq{eq:boundphimp_reciprocal2} is repeated verbatim.
  \end{enumerate}

  \begin{thm}\label{thm:suff_barrier_3}  Let Conditions $(\psi,\infty)$, $(\bV,\infty)$, ($G;\be$), ($\hG;a$) hold, and
$\nu_0\in (0,1)$. 
Then the statements of Theorem \ref{thm:suff_barrier_1} hold with $\cL^*(x^*;\de_\pm,u_\pm)$
in place of $\cL(\de_\pm,u_\pm)$.

\end{thm}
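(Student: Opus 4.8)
The plan is to deduce Theorem~\ref{thm:suff_barrier_3} from Theorem~\ref{thm:suff_barrier_1} by the same device used to pass from Theorem~\ref{thm:Euro-non-symm1} to Theorem~\ref{thm:Euro-non-symm2}: replace each trajectory contour $\cL(\de;u)\in\cT_{\mathrm{ext}}$ by the ``flattened'' contour $\cL^*(x^*;\de,u)$ constructed before Theorem~\ref{thm:Euro-non-symm2}, whose imaginary part is frozen at the constant value $y(\cL(\de,u);\pm x^*)$ once $|t|\ge x^*$. The only new feature is that the barrier-option formula \eq{eq:V1Gh3} involves two contours, one in the $\eta$-plane and one in the $\xi$-plane, so we need an ordered pair $\cL^*(x^*;\de_-,u_-)\prec\cL^*(x^*;\de_+,u_+)$ with $\de_-<0<\de_+$ and $u_-<-\be<u_+$. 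Flattening is needed precisely because, by \eq{eq:as_y_de0}, along $\cL(\de;u)$ one has $\Im\xi\sim p(\de)|\Re\xi|^{\barnu+1-\nu_0}$; when $\nu_0\le(\barnu+1)/2$ one of the oscillating factors $e^{i(x-h)\eta}$, $e^{i(h-a)\xi}$ in \eq{eq:V1Gh6} then grows like $\exp(c|\Re\xi|^{\barnu+1-\nu_0})$ with $\barnu+1-\nu_0\ge\nu_0$, so it is no longer absorbed by $|\Phi(\xi)|\le C\exp(-d_0|\Re\xi|^{\nu_0})$; freezing the tails keeps $\Im\eta$ and $\Im\xi$ bounded and restores the decay. (For $\nu_0>(\barnu+1)/2$ the statement is already contained in Theorem~\ref{thm:suff_barrier_1}, so only the sub-case $\nu_0\le(\barnu+1)/2$ requires work, but the construction below is harmless in general.)

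First I would carry out the two adjustments flagged in the discussion preceding the theorem. (1) Choose $x^*$ so large that, using $\nu_0\in(0,1)$ together with \eq{eq:asymdj}--\eq{eq:asymdj0} (which say that $\Im\psi(t+iy)$ is small for $t$ large and $y$ bounded), one has $\Im\psi(\xi)\in(-\de^*,\de^*)$ for every $\xi$ on both contours $\cL^*(x^*;\de_\pm,u_\pm)$; this repeats, for two contours, the verification made before Theorem~\ref{thm:Euro-non-symm2} and uses $-\de^*<\de_\pm<\de^*$, $\Im\psi(t+iy(\cL_{\de_\pm},t))=\de_\pm$ for $|t|\ge x^*$, and $\Im\psi\in(-\de^*,\de^*)$ on the unmodified part $|t|\le x^*$. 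Taking $\de^*<\ga$ then gives $\Phi(\xi)\in\cC_{\de^*}$ on both contours, hence $1-q\Phi(\xi)\notin(-\infty,0]$ and $|1-q\Phi(\xi)|\ge c$ uniformly for $q\in\cD(0,re^{\psi(-i\be)})\cup(-\cC_{\pi-\ga})$. (2) Re-prove the Wiener--Hopf bounds \eq{eq:boundphimp_reciprocal1}--\eq{eq:boundphimp_reciprocal2} of Lemma~\ref{lem:bounds_phipmq01} with $\cL^*(x^*;\de_\pm,u_\pm)$ in place of $\cL(\de_\pm,u_\pm)$; in \eq{phipq_def_p}, \eq{phimq_def_m} the contours of integration are left unchanged, and since $\de_-<0<\de_+$ while $|t|^{1-\nu_0}$ is increasing in $|t|$, the estimate \eq{eq:bound_Re_b_pm} for $\Re b^\pm_q(\xi)$ is obtained verbatim from the argument of Section~\ref{proof:lem:bounds_phipmq}.

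With these ingredients the rest is the proof of Theorem~\ref{thm:suff_barrier_1} line by line. For part (a), deform $\{\Im\eta=\omm\}$ and $\{\Im\xi=-\be\}$ in \eq{eq:V1Gh3} into $\cL^*(x^*;\de_-,u_-)$ and $\cL^*(x^*;\de_+,u_+)$. The inner double integrand is bounded by a constant times $e^{-d_0|\Re\eta|^{\nu_0}(1+o(1))}/(|\xi-\eta|\,|\xi|)$ multiplied by the oscillating factors, which are now uniformly bounded because the imaginary parts of $\eta$ and $\xi$ stay bounded along the flattened tails; combining this with $|\phimq(\xi)|+|1/\phimq(\xi)|\le C$ from step (2) and the same splitting into $\{|\xi|>2(|\eta|+1)\}$ and its complement as in Theorem~\ref{thm:suff_barrier_1} gives absolute convergence of the inner integral, uniformly in $q\in\cD(0,re^{\psi(-i\be)})$, and hence the analogue of \eq{eq:V1Gh6}. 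For part (b), assuming $n>7$, deform the outer contour $\{|q|=re^{\psi(-i\be)}\}$ into $\cL_{L;\sg_\ell,b_\ell,\om_\ell}$ by means of the part (b) version of Lemma~\ref{lem:bounds_phipmq01}; throughout the deformation $1-q\Phi(\eta)\notin(-\infty,0]$ and $1/(1-q\Phi(\eta))$ stays bounded, and by \eq{eq:boundphimp_reciprocal2} the full integrand is dominated by a function of $(q,\eta,\xi)$ that is integrable for $n>7$, which yields the analogue of \eq{eq:V1Gh7}.

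I expect the difficulty to be bookkeeping rather than a new idea: one must check that a single truncation height $x^*$ and a single ordered pair $\de_-<0<\de_+$, $u_-<-\be<u_+$ can be chosen so that simultaneously (i) $\Im\psi\in(-\de^*,\de^*)$ on both flattened contours, (ii) $\cL^*(x^*;\de_-,u_-)\prec\cL^*(x^*;\de_+,u_+)$ with a positive gap near the origin and with a common horizontal asymptote of bounded height, and (iii) the Wiener--Hopf estimates of step (2) and the absolute-convergence bound of step (3) hold uniformly for all $q$ in both the disc and the sector $-\cC_{\pi-\ga}$. The one place where $\nu_0\in(0,1)$ enters essentially is the verification in (i): it rests on \eq{eq:asymdj}--\eq{eq:asymdj0} and hence, as in Theorem~\ref{thm:Euro-non-symm2}, on having $\barnu=0$ (equivalently $d_j>0$ for all $j$ with $\nu_j\ge0$); this hypothesis should be made explicit in the statement, since if $\barnu=\nu_{j_0}>0$ the term $\Im d_{j_0}(x+iy)^{\nu_{j_0}}$ with frozen $y$ grows like $\Im d_{j_0}\,|x|^{\nu_{j_0}}$ and the flattened contour no longer keeps $\Im\psi$ inside $(-\de^*,\de^*)$.
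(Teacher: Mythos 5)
Your proposal matches the paper's own argument: the paper proves Theorem \ref{thm:suff_barrier_3} precisely by repeating the proof of Theorem \ref{thm:suff_barrier_1} with the flattened contours $\cL^*(x^*;\de_\pm,u_\pm)$ of Theorem \ref{thm:Euro-non-symm2}, noting exactly the two subtleties you list — a pair of contours with $\de_-<0<\de_+$, $u_-<-\be<u_+$, and the bound \eq{eq:boundphimp_reciprocal2} re-derived for $\xi$ on the flattened contours with the integration contours in \eq{phipq_def_p}, \eq{phimq_def_m} left unchanged, using that $\de_-<0<\de_+$ and $|x|^{1-\nu_0}$ increases. Your closing observation is also apt: the flattening step, as in Theorem \ref{thm:Euro-non-symm2}, relies on $\barnu=0$ (i.e.\ $d_j>0$ whenever $\nu_j\ge 0$), a hypothesis stated explicitly there but left implicit in the statement of Theorem \ref{thm:suff_barrier_3}.
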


 \section{Conclusion}\label{s:concl}
 In the paper, we derived several sets of sufficient conditions for applicability of the new efficient numerical realization of
 the inverse $Z$-transform, introduced in \cite{EfficientDiscExtremum} and applied  to pricing of barrier and lookback options with discrete monitoring, in L\'evy models. The idea is to transform the circle of integration in the standard $Z$-inversion formula into an infinite contour such that, after an appropriate sinh-change of variables, the integrand is analytic in a strip around the line of integration and the simplified trapezoid rule with a moderate number of terms allows one to satisfy the error tolerance of the order of $E-15$, whereas the trapezoid rule applied to the initial integral may require 100 times more terms.
 
We analyzed the complexities of the two schemes in detail in applications to evaluation of powers $P^n$ of a bounded operator
$P$ acting in a Banach space. The key condition for the applicability of the efficient $Z$-transform is that the spectrum $\sg(P)$ of $P$ is in the right half-plane. If $P$ is sectorial with the opening angle $2\ga$, where $\ga\in (0,\pi/2)$,
the efficiency increases as $\ga\to 0$. We derived an explicit prescription for an approximately optimal choice of the parameters of the scheme given the error tolerance. The limiting case $\ga=0$ arises in applications to pricing options and calculation expectations of various kind in symmetric L\'evy model (after an appropriate Esscher transform, the characteristic exponent 
satisfies $\psi(\xi)=\psi(-\xi)$.) We derived efficient pricing formulas and numerical algorithms for European options in the symmetric case. The symmetry condition implies that not only the infinitesimal generator of the process is sectorial but
the transition operators are sectorial as well.  In Section \ref{s:Euro_Levy_non-symmetric}, we consider pricing European options in non-symmetric L\'evy models. We construct a non-linear deformation of the dual space which makes
the transition operator sectorial, with an arbitrary small opening angle, under mild conditions that are satisfied 
for wide classes of
 non-symmetric Stieltjes-L\'evy models (SL-processes). The class of SL-processes is introduced in \cite{EfficientAmenable},
  and the general definition in terms of Stieltjes measures is the key point of the construction of the deformation. 
  It is demonstrated in \cite{EfficientAmenable} that essentially all popular classes of L\'evy processes are SL-processes. 
  
  Additional conditions on the parameters are satisfied for processes with non-zero Brownian motion component, with drift, and jump component which is a mixture of several pure jump processes. Each infinite variation component should be symmetric after an appropriate Esscher transform which can be different for different components. If there is no BM component, then additional conditions on the drift and jump components of lower orders should be imposed. 
  
In Section \ref{s:barrier_Levy_non_symm}, we consider in detail pricing  single barrier options in non-symmetric L\'evy models. The proof of the applicability of the efficient inverse $Z$-transform to single barrier options is based on the bounds for the Wiener-Hopf factors in the pricing formula, a pair of contours in $\bC_\eta$ and $\bC_\xi$ planes being used. The contours are different from the 
contours in \cite{EfficientDiscExtremum} for options with barrier-lookback features and in the iterative scheme in 
double barrier options in \cite{EfficientDiscDoubleBarrier}. In the present paper, we considered single barrier options
with the payoff functions generalizing puts, calls and digitals with the strike in the inaction region; exactly the same modified contours
can be used to justify  the application of the efficient inverse $Z$-transform to no-touch options, credit default swaps and more general  options with barrier-lookback features. The same technique can be very efficient for evaluation of the
various insurance products in the long run.


\appendix 

\section{Technicalities}\label{s:tech}

\subsection{Proof of \eq{eqpm:H_trap}}\label{Proof of eq:bound_H_trap}
As $r\uparrow 1$, 
\[
H(r)\sim2\int_0^{\pi/2}(1-2r\cos \varphi+r^2)^{-1/2}  d\varphi=2\int_0^{\pi/2}((1-r)^2+r\varphi^2+O(\varphi^4))^{-1/2}  d\varphi.
\]
We take a function $\de:(0,1)\to \bR_+$ such that $\de(r)\to 0$ and $(1-r)/\de(r)\to 0$ as $r\uparrow 1$,
and consider the integrals $I_1, I_2, I_3$ over the subsets $U_1=[0,(1-r)/\sqrt{r}]$,
$U_2=((1-r)/\sqrt{r},\de(r))$ and $U_3=[\de(r),\pi/2]$, respectively. On $U_1$, the integrand admits an upper bound via $C(1-r)$, hence, $I_1=I_1(r)\le C_1$, where $C, C_1$ are independent of $r$. On $U_3$, the integrand is bounded by $C\varphi^{-1}$,
hence, $U_3=O(\ln(1/\de(r))=o(-\ln(1-r))$ as $r\to 1$. Finally, on $U_2$, the integrand can be represented as
$
(1+O(\de(r)^2)((1-r)^2+r\varphi^2))^{-1/2}. $ Therefore, letting $\de_1(r)=\de(r)\sqrt{r}/(1-r)$, and making the change of variables
$\varphi=y(1-r)/\sqrt{r}$, we obtain
\beqast
I_2(r)&\sim&2\int_{(1-r)/\sqrt{r}}^{\de(r)}((1-r)^2+r\varphi^2)^{-1/2}d\varphi          \\
&=&2r^{-1/2}[\ln(\de_1(r)+\sqrt{\de_1(r)^2+1}-\ln(1+\sqrt{2})]\sim -2\ln(1-r),
\eqast
and \eq{eqpm:H_trap} follows.


\subsection{Proof of Lemma \ref{lem:vect field}}\label{proof_lem:vect field}
 By the Cauchy-Riemann conditions, the vector  field $\bV$ degenerates at $\xi$ if and only if $\xi$ is a zero of $\psi'(\xi)$; under condition \eq{eq:as_exp_psi}, for any $\eps>0$, the number of zeros of $\psi'(\xi)$ on $\{|\Re \xi|>\eps\}$ is finite. Hence, the number of zeros on
$\bC\setminus i\bR$ is at most countable, hence, for almost all $\de$, any $\cL\in \cT$ satisfying $\Im\psi(\xi)=\de, \xi\in \cL$, does not tend to any zero.
For such an $\cL$ and any compact $K\subset \bC\setminus i\bR$, there exists $C>0$ such that the set $\bV(\cL\cap K)$ is bounded away from 0. In view of \eq{eq:as_exp_psi}, it remains to prove that almost all $\cL$ are bounded away from $i\bR$.
 Consider $iy\in i\bR$. If $\psi(iy)\in\bR$, $\cL\in \cT$ cannot tend to $iy\in i\bR$. 
Next, the number of atoms of the measures $\cG^0_\pm$ is at most countable,
as well as the number of the end points of intervals on which one of the measures is absolutely continuous with the positive density.
Hence, it remains to prove that if $\cG^0_+(dt)\vert_{(a, b)}=c(t)dt$ (resp., $\cG^0_-(dt)\vert_{(a, b)}=c(t)dt$) where $c$ is a positive measurable
function and $0<a<b$,
then $i(-b,-a)$ (resp., $i(a,b)$) is a repulsive variety for any trajectory $\cL\in \cT$. Since
$\overline{\psi(\xi)}=\psi(-\bar\xi)$, it suffices to prove that $i(a,b)$ is repulsive. 
Let  $y_0\in (a,b)$. First, we prove that if there exists $\eps\in (0,1)$ such that $c(y)=c>0$ is constant on $(y_0-\eps, y_0+\eps)$, then there exists $x_0>0$ and $C(y_0)>0$ independent of $c$ and $\eps$
such that 
\beqa\label{der_psi_y} 
\dd_y \Im\psi(x+iy)\vert_{y=y_0}&\ge& cC(y_0),\ x\in (0,x_0),\\\label{der_psi_y_m}
\dd_y \Im\psi(x+iy)\vert_{y=y_0}&\le& -cC(y_0),\ x\in (-x_0,0).
\eqa
Let $\xi=x+iy$. We  differentiate \eq{eq:sSLrepr} w.r.t. $y$ 
\beqa\label{eq:sSLrepr_der}
\dd_y\psi(\xi)&=&\int_{(0,+\infty)}\frac{2a^+_2i\xi+a^+_1}{t-i\xi}\cG^0_+(dt)-\int_{(0,+\infty)}\frac{a^+_2\xi^2-ia^+_1\xi}{(t-i\xi)^2}\cG^0_+(dt)
\\\label{eq:sSLrepr_der2}
&&+\dd_y[a^-_2\xi^2+ia^-_1\xi)ST(\cG^0_-)i\xi]
+\sg^2i\xi+\mu. 
\eqa
For $y>0$, the imaginary parts of the terms on the RHS of \eq{eq:sSLrepr_der} and
the sum of the last two terms on the RHS \eq{eq:sSLrepr_der2} are $O(|x|)$ as $x\to 0$. Hence, it remains to prove
\eq{der_psi_y}-\eq {der_psi_y_m} for the imaginary part of the derivative on the RHS of \eq{eq:sSLrepr_der2}. Since
\[
\dd_y\Im[(a^-_2\xi^2+ia^-_1\xi)ST(\cG^0_-)i\xi]=\Im\int_{y_0-\eps}^{y_0+\eps}\dd_y\frac{a^-_2\xi^2+ia^-_1\xi}{t+i\xi}c dt
+O(|x|),
\]
uniformly in $\eps, c$, and $a^-_j\ge 0$, it suffices to consider the integral above 
in cases (1) $a^-_2=0, a^-_1=1$ and (2) $a^-_2=1, a^-_1=0$.
In Case (1), we calculate
\[
\dd_y\left(\frac{i\xi}{t+i\xi}\right)=\dd_y\left(1-\frac{t}{t+i\xi}\right)=-\frac{t}{(t+i\xi)^2}=-\frac{1}{t+i\xi}+\frac{i\xi}{(t+i\xi)^2}
\]
and integrate
\beqast
c\int_{y_0-\eps}^{y_0+\eps}\left(\frac{i\xi}{t+i\xi}\right)'_y\left|_{\Im\xi=y_0}dt\right.&=&-c\int_{y_0-\eps}^{y_0+\eps}\left(\frac{1}{t-y_0+ix}
-\frac{-y_0+ix}{(t-y+ix)^2}\right)dt\\
&=&-c\left(\ln\frac{\eps+ix}{-\eps+ix}+(-y_0+ix)\left[\frac{1}{\eps+ix}-\frac{1}{-\eps+ix}\right]\right)\\
&=&c\left( \mathrm{sign}(x)i\pi+O(|x|)-(y_0-ix)\frac{2\eps}{\eps^2+x^2}\right),
\eqast
where the constant in the $O$-term is independent of  $\eps, c$. 
Since $2\eps|x|/(\eps^2+x^2)\le 1$, we have
\bbe\label{bound_dr1}
 \mathrm{sign}(x)\Im \int_{y_0-\eps}^{y_0+\eps}\dd_y\left(\frac{i\xi}{t+i\xi}\right)\vert_{y=y_0}dt\ge  \mathrm{sign}(x)( \pi-1)+O(|x|).
 \ee
 The calculations in Case (2) are similar but the result is marginally different from \eq{bound_dr1}:
 \bbe\label{bound_dr2}
 \mathrm{sign}(x)\Im \int_{y_0-\eps}^{y_0+\eps}\dd_y\left(\frac{\xi^2}{t+i\xi}\right)\vert_{y=y_0}dt\ge  \mathrm{sign}(x) 2y_0(\pi-1)+O(|x|).
 \ee
 Since $a^-_j\ge 0$, \eq{der_psi_y}-\eq{der_psi_y_m} follow from \eq{bound_dr1}-\eq{bound_dr2}.

To finish the proof, we omit the assumption that the density of $\cG^0_-$ is constant in a neighborhood of $y_0\in (a,b)$. Let $n_0$ satisfy
$a<y_0-1/n_0, y_0+1/n_0<b$. For $n\ge n_0$,
 modify the density $c$ on $(y_0-1/n, y_0+1/n)$ replacing $c$ with $c(y_0)$, denote the resulting measure by $\cG^0_{n,-}$, the new
 characteristic function by $\psi_n$, and the new set of trajectories by $\cT_n$. It is evident that $\psi_n$ converges to
 $\psi$ on any compact $K\subset \bC\setminus i\bR$.
 It follows from \eq{der_psi_y}-\eq{der_psi_y_m}  that no trajectory $\cL_{n}\in \cT_n$ s.t. $\Im\psi_n(\eta)\neq 0, \eta\in \cL_n,$ crosses the segment connecting  $(-x_0,y_0)$ and $(x_0,y_0)$,
 hence, no trajectory $\cL\in \cT$ s.t. $\Im\psi(\eta)\neq 0$, $\eta\in \cL$, passes between $(-x_0,y_0)$ and $(x_0,y_0)$.

  \subsection{Proof of \eq{eq:bound_Re_b_pm}}\label{proof:lem:bounds_phipmq}
We prove the bound \eq{eq:bound_Re_b_pm} for $b^+_q(\xi), (q,\xi)\in \cU_+, \Re\xi\ge 0;$ the proof for $\xi$ in the left half-plane and proof for $b^-_q(\xi), (q,\xi)\in \cU_-$
are by symmetry. 
We use the following proposition, which is immediate from the choice of $\de^*, \de_\pm, u_\pm$ and construction of 
 $\cL(\de_\pm,u_\pm)$.  
\begin{prop}\label{prop:facts}
 The curves  $\cL(\de_\pm,u_\pm)$ are symmetric w.r.t. the imaginary axis, and
\begin{enumerate}[(1)]
\item
$1-q\Psi(\xi)\not\in (-\infty,0]$ for all $(q,\xi)\in \cU_\pm$;
\item
there exists
$C>0$ such that $|1/(1-q\Phi(\xi))|\le C$ for all $(q,\xi)\in \cU_\pm$;
\item
for any $\eps>0$, the curves $\cL(\de_\pm,u_\pm)$ can be constructed so that 
$\Im\psi(\xi)=\de_\pm$ for $\xi\in\cL(\de_\pm,u_\pm)$ such that $\Re\xi\ge\eps$, and 
$\Im\psi(\xi)=-\de_\pm$ for $\xi\in\cL(\de_\pm,u_\pm)$ such that $\Re\xi\le -\eps$
\item
there exists $R_1>0$ such that if $\eta\in\cL(\de_\mp,u_\mp)$, $|\eta|>R_1$, then $\eta$ is of the form 
$\eta_\mp(x):=x+iy(\cL(\de_\mp,u_\mp); |x|)$.
  We can choose a smooth parametrization  $\bR\mapsto \cL(\de_\mp,u_\mp)$ introduced in Remark \ref{rem:param_cL} so that if $\eta\in \cL(\de_\mp,u_\mp)$ and $|x|\ge R_1$, 
  then $\eta=\eta_\mp(x):=x+iy(\cL(\de_\mp,u_\mp); |x|)$;
  \item
  there exists $Q_0>0$ such that, for $q\in (-\cC_{\pi-\ga})$ s.t. $|q|\ge Q_0$, and any $a\in [1/2,2]$,
  the equation $a-|q| e^{-\Re  \psi(\eta_-(|x|))}=0$ has a unique solution $x^*_{+}(a,q)$ on $(\Re\eta_-^{-1}(R_1),+\infty)$,
  and a unique solution $x^*_{-}(a,q)=-x^*_{+}(a,q)$ on $(-\infty,-\Re\eta_-^{-1}(R_1))$;
  \item there exists $\ka>0$ such that  $x^*_{\pm}(a,q)= \pm(\ln(|q|/(ad_0)))^{1/\nu_0}(1+(\ln|q|)^{-\ka})$ as $q\to \infty$;
  \item
  there exists $C>0$ such that for $x\in (x^*_{-}(2,q),x^*_{+}(2,q))$ ,
  \bbe\label{repr:ln(1-qPhi)_near0}
 | \ln(1-q\Phi(\eta_-(x)))-\ln(-q)-\ln\Phi(\eta_-(x)))|\le C/|q\Phi(\eta_-(x)))|;
 \ee
 \item
 there exists $C>0$ such that if $x>x^*_{+}(0.5,q)$ or $x<x^*_{-}(0.5,q)$,
  \bbe\label{repr:ln(1-qPhi)_far0}
 | \ln(1-q\Phi(\eta_-(x)))+q\Phi(\eta_-(x)))|\le C(|q\Phi(\eta_-(x))))^2;
 \ee
 \item
 there exists $\ka>0$ such that $(\eta_-)'(x)=1+O(|x|^{-\ka})$ as $x\to\pm\infty$.
 
\end{enumerate}
\end{prop}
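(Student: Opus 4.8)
The plan is to obtain each of (1)--(9) directly from the explicit construction of the curves $\cL(\de_\pm,u_\pm)$ in \sect{sss:constr_def}, the asymptotic expansion \eq{eq:as_exp_psi}, the already-established \lemm{param1}, \lemm{lem:asymp_psi_SL} and \lemm{lem:extended curves_0}, and Proposition~\ref{rem:wide_disc}; the identity $\overline{\psi(\xi)}=\psi(-\bar\xi)$ will be used repeatedly to pass from the right wing of a curve to its left wing. There is nothing deep here: the only real content is verifying that one and the same construction delivers all of the listed properties simultaneously, with every bound uniform in $(q,\xi)\in\cU_\pm$ and in the admissible parameters $\de^*,\de_\pm,u_\pm$.

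For (1)--(2) I would argue as follows. By \lemm{lem:extended curves_0} and the construction, $\Im\psi(\xi)\in(-\de^*,\de^*)$ for $\xi\in\cL(\de_\pm,u_\pm)$, while $\Re\psi(\xi)\to+\infty$ along the curve because $d_0>0$ and $\Re d_j>0$; hence $\Phi(\xi)=e^{-\psi(\xi)}\in\cC_{\de^*}$ and $|\Phi|$ is bounded on the curve, with $\sup_{\cL}|\Phi|$ as close to $\Phi(-i\be)$ as we like once $\de^*$ and $d$ are small (along a trajectory $\Re\psi$ is monotone by the Cauchy--Riemann equations). For $q\in\cD(0,re^{\psi(-i\be)})$, shrinking $\de^*,d$ so that $re^{\psi(-i\be)}\sup_{\cL}|\Phi|<1$ gives $|q\Phi(\xi)|<1$, whence $|1-q\Phi(\xi)|\ge 1-|q\Phi(\xi)|$ is bounded below; for $q\in-\cC_{\pi-\ga}$ one has $q\Phi(\xi)\in-\cC_{\pi-\ga+\de^*}$, which excludes $(0,+\infty)$ since $\de^*<\ga$, so $1-q\Phi(\xi)\notin(-\infty,0]$, and the uniform lower bound for $|1-q\Phi(\xi)|$ follows exactly as in Proposition~\ref{rem:wide_disc}. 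Item (3) reflects the freedom in the construction: one chooses the extension arc of $\cL(\de_\pm,u_\pm)$ near the imaginary axis to lie in $\{|\Re\xi|<\eps\}$, so that on $\{\Re\xi\ge\eps\}$ the curve is the bare trajectory $\cL_{\de_\pm}\in\cT_0$ (along which $\Im\psi=\de_\pm$), and on $\{\Re\xi\le-\eps\}$ it is the reflection $\xi\mapsto-\bar\xi$ of that trajectory, along which $\Im\psi=-\de_\pm$ by $\overline{\psi(\xi)}=\psi(-\bar\xi)$. Item (4) is the asymptotic parametrization of \lemm{param1} and \lemm{lem:asymp_psi_SL}: for $|\Re\xi|$ large the right wing is the graph $x\mapsto x+iy(\cL(\de_\pm,u_\pm);x)$ and the left wing its mirror image, so $\eta=\eta_\pm(x)=x+iy(\cL(\de_\pm,u_\pm);|x|)$, and the smooth parametrization with $x(t)=t$ near $\pm\infty$ is furnished by Remark~\ref{rem:param_cL}.

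For (5), (6) and (9) the key observation is that, along a wing, $\Re\psi(\eta_-(x))$ depends on $x$ only through $|x|$ (again by $\overline{\psi(\xi)}=\psi(-\bar\xi)$), and \eq{eq:as_exp_psi} together with $\barnu+1-\nu_0<\nu_0$ gives $\Re\psi(\eta_-(x))=d_0|x|^{\nu_0}(1+O(|x|^{-\ka}))$ for some $\ka>0$; fixing $R_1$ large enough makes $x\mapsto\Re\psi(\eta_-(|x|))$ strictly increasing to $+\infty$ past $\Re\eta_-^{-1}(R_1)$, so for $|q|\ge Q_0$ the equation $a=|q|e^{-\Re\psi(\eta_-(|x|))}$ has a unique root $x^*_+(a,q)$ and $x^*_-(a,q)=-x^*_+(a,q)$, and inverting $d_0x^{\nu_0}(1+O(x^{-\ka}))=\ln(|q|/a)$ yields the stated asymptotics of $x^*_\pm(a,q)$; differentiating $\eta_-(x)=x+iy(\cL(\de_-,u_-);|x|)$ and invoking \eq{eq:asymp_der_y} gives (9) with $\ka=\min\{1,\nu_0-\barnu\}$. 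Items (7) and (8) are then routine log-estimates: on $(x^*_-(2,q),x^*_+(2,q))$ one has $|q\Phi(\eta_-(x))|\ge 2$ (smaller $|x|$ gives smaller $\Re\psi$, hence larger $|q\Phi|$, and for $|q|\ge Q_0$ the bound also holds on the bounded middle arc), so writing $1-q\Phi=-q\Phi\,(1-1/(q\Phi))$ --- with $\ln(1-q\Phi)$ well defined by (1) --- and using $|\ln(1-w)|\le C|w|$ for $|w|\le 1/2$ gives \eq{repr:ln(1-qPhi)_near0}; on $\{x>x^*_+(0.5,q)\}\cup\{x<x^*_-(0.5,q)\}$ one has $|q\Phi(\eta_-(x))|\le 1/2$, and $\ln(1-w)=-w+O(w^2)$ gives \eq{repr:ln(1-qPhi)_far0}.

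I expect the only steps needing genuine care to be the uniformity in (1)--(2) --- choosing $\de^*$ and $d$ small enough (and $\de^*<\ga$) so that the whole of the disc part of $\cU_\pm$ lies where $1-q\Phi$ is bounded away from $(-\infty,0]$ --- and the inversion in (5)--(6), where one must first fix $R_1$ large enough to secure the monotonicity of $x\mapsto\Re\psi(\eta_-(|x|))$ before solving for $x^*_\pm(a,q)$ uniformly as $q\to\infty$ in the sector. Neither is a real obstacle once \eq{eq:as_exp_psi}, \lemm{lem:extended curves_0} and Proposition~\ref{rem:wide_disc} are in hand; as the surrounding text says, the proposition is essentially a repackaging of those facts together with the construction of the contours.
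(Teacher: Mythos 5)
Your proposal is correct and follows exactly the route the paper intends: the paper offers no proof beyond declaring Proposition~\ref{prop:facts} ``immediate from the choice of $\de^*,\de_\pm,u_\pm$ and construction of $\cL(\de_\pm,u_\pm)$'', and your verification simply reads each item off that construction, the expansion \eq{eq:as_exp_psi}, Lemmas~\ref{param1}, \ref{lem:asymp_psi_SL}, \ref{lem:extended curves_0} and the symmetry $\overline{\psi(\xi)}=\psi(-\bar\xi)$, which is the same argument made explicit. The points you flag as needing care (uniformity of the lower bound on $|1-q\Phi(\xi)|$ over the sector and the monotone inversion defining $x^*_\pm(a,q)$) are handled correctly, so nothing is missing.
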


\vskip0.1cm
\noindent
{\sc 
A bound on $\cU_+(Q_0)=\{(q,\xi)\in \cU_+\ |\ |q|\le Q_0\}$.} We may assume that $u_-\neq 0$. Then 
$b^+_q(\xi)=\bfo_{(0,+\infty)}(u_-)\ln(1-q)+b^{++}_q(\xi)$, where
\bbe\label{bpp}
b^{++}(\xi)=-\frac{1}{2\pi i}\int_{\cL(\de_-,u_-)}\frac{\xi\ln(1-q\Phi(\eta))}{\eta(\xi-\eta)}d\eta.
\ee
Thus, it suffices  to derive a bound for $b^{++}_q(\xi)$. 
Fix $\xi\in \cL(\de_+,u_+)$ and divide the contour 
$\cL(\de_-,u_-)$ on the RHS of \eq{bpp} into two contours by
 conditions $|\eta|\le |\xi|/2$ and $|\eta|> |\xi|/2$. On the first contour, the integrand is bounded by $C(Q_0)e^{-(d_0/2)|\eta|^{\nu_0}}/|\eta|$,
 and on the second one, by $C(Q_0)e^{-(d_0/2)|\eta|^{\nu_0}}|\eta|^{\nu_0-1}$, where $C(Q_0)$ is independent of $\xi\in \cL(\de_+,u_+)$.
 Thus, for $(q\xi)\in \cU(Q_0)$, the bound \eq{eq:bound_Re_b_pm} for $b^{++}_q(\xi)$ is valid with $B=0$, $C_2=0$ and $C_1$ that depends on $\bQ_0$ only. 
 
 \vskip0.1cm
\noindent
{\sc 
Bounds on
 $\cU\setminus \cU(Q_0)$.} It is convenient to prove the bound \eq{eq:bound_Re_b_pm} considering integrals over a finite number of subsets of $\cU_-$, for $\xi$ in specified subsets of $\cU_+$.  In the bounds below, $C,C_1,\ldots, c,c_1,\ldots $ denote constants that can be chosen
the same for all $q, |q|\ge Q_0$, and all $\xi$,  $\eta$ from the subsets of $\cL(\de_\pm,u_\pm)$ considered at each step.

 We may assume that $|q|$ is sufficiently large so that $x^*_\pm(a,q)$, $a\in [1/2, 2]$, are well-defined;
 $\bR\ni x\mapsto \eta_-(x)$ is the parametrization of $\cL(\de_-,u_-)$ defined in Proposition \ref{prop:facts} (4).
 Let $\eps>0$ be as in Proposition \ref{prop:facts} (3),
 and let $x_{*,+}(\eps)$  be the minimal $x>0$ such that $\Re\eta_-(x)=\eps$,  and $x_{*,-}(\eps)=-x_{*,+}$  the maximal $x<0$ such that $\Re\eta_-(x)=-\eps$. Introduce 
 \beqast
 J^0_{\eps}&=&\{x\in\bR\ |\ x_{*,-}(\eps)<x<x_{*,+}(\eps)\},\\
 J^{+,1}_{\eps, q}&=&\{x\in\bR\ |\ x_{*,+}(\eps)<x<x^*_{+}(2,q)\},
 J^{-,1}_{\eps, q}=\{x\in\bR\ |\ x^*_{-}(2,q)<x<x_{*,-}(\eps)\},\\
 J^{+,2}_{q}&=&\{x\in\bR\ |\ x^*_{+}(2,q)\le x\le x^*_{+}(1/2,q)\},
 J^{-,2}_{q}=\{x\in\bR\ |\ x^*_{-}(1/2,q)\le x \le x^*_{-}(2,q)\},\\
 J^{+,3}_{q}&=&\{x\in\bR\ |\  x> x^*_{+}(1/2,q)\},
 J^{-,3}_{q}=\{x\in\bR\ |\ x<x^*_{-}(1/2,q)\}.
 \eqast
 For a union of intervals $J$, define
 \beqa\label{eq:bpJ1}
 b^{+}_{q}(J;\xi)&=&-\frac{1}{2\pi i}\int_{J}\frac{\xi\ln\frac{1-q}{1-q\Phi(\eta_-(x))}}{\eta(\xi-\eta_-(x))}d\eta_-(x),\\\label{eq:bpJ1n}
 b^{+,n}_{q}(J;\xi)&=&-\frac{1}{2\pi i}\int_{J}\frac{\xi\ln(1-q)}{\eta(\xi-\eta_-(x))}d\eta_-(x),\\\label{eq:bpJ1d}
 b^{+,d}_{q}(J;\xi)&=&-\frac{1}{2\pi i}\int_{J}\frac{\xi\ln(1-q\Phi(\eta_-(x)))}{\eta(\xi-\eta_-(x))}d\eta_-(x),
 \eqa
and  note that
 \bbe\label{sum_bp}
 b^{+}_{q}(J;\xi)= b^{+,n}_{q}(J;\xi)- b^{+,d}_{q}(J;\xi).
 \ee
\vskip0.1cm
\noindent
{\sc 
Bound for  $b^+_q(J^0,\xi)$.}   The integrand being uniformly bounded by $C\ln|q|$,   we have
 \bbe\label{eq:bound_eps_bpp}
 | b^{+}_{q}(J^0;\xi)|\le C_1\eps_1 \ln|q|,
 \ee
 where $\eps_1\to 0$ as $\eps\to 0$.

 Considering the integrals over $J^{\pm,1}_{\eps,q}$ and $J^{\pm,j}_{q}, j=2,3$, we use
 $\Im\psi(\eta)=\pm \de_-$.
 \vskip0.1cm
\noindent
{\sc 
Bound for  $b^+_q(J^{+,1}_{\eps, q}; \xi)+b^+_q(J^{-,1}_{\eps, q}; \xi)$.}
 Using \eq{repr:ln(1-qPhi)_near0}, we obtain 
 \bbe\label{repr:b1pm}
 b^+_q(J^{\pm,1}_{\eps, q}; \xi)=b^{+,\pm}_{q;0}(\xi)+b^{+,\pm}_{q;1}(\xi),
 \ee
 where
 \bbe\\\label{eq:bpJ10}
 b^{+,\pm}_{q;0}(\xi)=-\frac{1}{2\pi i}\int_{J^{\pm,1}_{\eps, q}}\frac{\xi\ln\Phi(\eta_-(x))}{\eta(\xi-\eta_-(x))}d\eta_-(x),
 \ee
 and $b^{+,\pm}_{q;1}$ admits the bound
 \beqa\nonumber
 |b^{+,\pm}_{q;1}(\xi)|&\le& C_1 \int_{J^{\pm,1}_{\eps, q}}\left|\frac{\xi}{q\Phi(\eta_-(x)))\eta(\xi-\eta_-(x))}(\eta_-)'(x)\right|dx
 \\\label{eq:bpJ11}
 &\le &C_2 \int_{J^{\pm,1}_{\eps, q}}\left|\frac{\xi}{\eta(\xi-\eta_-(x))}(\eta_-)'(x)\right|dx.
 \eqa
In \eq{eq:bpJ10}, we integrate by parts using 
 \bbe\label{int_by_part}
 \frac{\xi d\eta}{\eta(\xi-\eta)}=d(\ln\eta-\ln(\eta-\xi))
 \ee
 and equalities
 \beqast
 \ln\Phi(\eta_-(x^*_{\pm}(2,q))&=&-\Re\psi(x^*_{+}(2,q)\mp i\de_-,\\
 -d\ln\Phi(\eta_-(x))&=&d\Re\psi(\eta_-(x)), |x|\ge x_{*,+}(\eps).
 \eqast
 We represent the result in the form 
 \bbe\label{eq:bpJ102a}
b^{+,+}_{q;0}(\xi)+b^{+,-}_{q;0}(\xi)=-g_0(q,\xi)-g_1(q,\xi), 
 \ee
 where 
 \bbe\label{eq:g1}
g_1(q,\xi)=\frac{1}{2\pi i}\int_{J^{+,1}_{\eps, q}\cup J^{-,1}_{\eps, q}}\ln\frac{\eta_-(x)}{\eta_-(x)-\xi}d\Re\psi(\eta_-(x)),
\ee
and
 \beqa\label{eq:g0}
g_0(q,\xi)&=&\frac{1}{2\pi i}\left.\ln\Phi(\eta_-(x))(\ln(\eta_-(x))-\ln(\eta_-(x)-\xi))\right|_{x_{*,+}(\eps)}^{x^*_{+}(2,q)}
\\\nonumber
&&+\frac{1}{2\pi i}\left.\ln\Phi(\eta_-(x))(\ln(\eta_-(x))-\ln(\eta_-(x)-\xi))\right|_{x^*_{-}(2,q)}^{x_{*,-}(\eps)}
\\\nonumber
&=&g_{00}(q,\xi)+g_{01}(q,\xi)+g_{02}(q,\xi)+g_{03}(q,\xi),
\eqa
where
\beqast\label{def_g00}
g_{00}(q,\xi)&=&-\frac{1}{2\pi i}\Re\psi(\eta_-(x^*_{+}(2,q)))\left[\ln\frac{\eta_-(x^*_{+}(2,q))}{\eta_-(x^*_{+}(2,q))-\xi}
-\ln\frac{\eta_-(-x^*_{+}(2,q))}{\eta_-(-x^*_{+}(2,q))-\xi}\right],\\\label{def_g01}
g_{01}(q,\xi)&=&\frac{\de}{2\pi}\left[\ln\frac{\eta_-(x^*_{+}(2,q))}{\eta_-(x^*_{+}(2,q))-\xi}
+\ln\frac{\eta_-(-x^*_{+}(2,q))}{\eta_-(-x^*_{+}(2,q))-\xi}\right],\\\label{def_g02}
g_{02}(q,\xi)&=&\frac{1}{2\pi i}\left[\Re\psi(\eta_-(x_{*,+}(\eps))\ln\frac{\eta_-(x_{*,+}(\eps))}{\eta_-(x_{*,+}(\eps))-\xi}
-\Re\psi(\eta_-(x_{*,-}(\eps))\ln\frac{\eta_-(-x_{*,+}(\eps))}{\eta_-(-x_{*,+}(\eps))-\xi}\right],
\\\label{def_g03}
g_{03}(q,\xi)&=&-\frac{\de}{2\pi }\left[\ln\frac{\eta_-(x_{*,+}(\eps))}{\eta_-(x_{*,+}(\eps))-\xi}
+\ln\frac{\eta_-(-x_{*,+}(\eps))}{\eta_-(-x_{*,+}(\eps))-\xi}\right].
\eqast
Consider the terms $g_{0j}(q,\xi), j=0,1,2,3$. Evidently, $g_{02}(q,\xi)$ and $g_{03}(q,\xi)$ admit bounds 
\beqa\label{bound_g02}
|\Re g_{02}(q,\xi)|&\le & C\eps,\\\label{bound_g03}
|\Re g_{03}(q,\xi)|&\le & C_1|\de|(1+\ln|\xi|),
\eqa
where $C$ is independent of $\de$. \footnote{The bound for $g_{03}(q,\xi)$ and a similar bound
for $g_{01}(q,\xi)$ below are the only places where
the terms $C|\de|\ln(1+|\xi|)$ appear. For $|\xi|$ in a bounded region, the terms $C|\de|\ln(1+|\xi|)$ can be omitted,
and if $\Re\xi>0$ is large, we can repeat the arguments below verbatim replacing the contour of integration
$\cL(\de_-,u_-)$ (which is symmetric w.r.t. the imaginary axis) with the contour symmetric w.r.t. $iu_-$.
For the new contour, $\Im\psi(\eta_-(x))=\de_-$ for all $x$, and, therefore, the plus sign in the square brackets in the formulas for $g_{01}$ and $g_{03}$ becomes the minus sign.   In the result, the $C|\de|\ln(1+|\xi|)$-term disappears.
}
Consider $g_{01}(q,\xi)$. In the region $|\xi|\le x^*_{+}(2,q))/2$, $g_{01}(q,\xi)$ is uniformly bounded, and in the region
$|\xi|\ge 2x^*_{+}(2,q)$, $g_{01}(q,\xi)$ admits the same bound as $g_{03}(q,\xi)$. In the region $x^*_{+}(2,q)/2\le |\xi|
\le 2x^*_{+}(2,q)$, we use  bounds $|x^*_{+}(2,q)-\xi|\ge cx^*_{+}(2,q)^{1-\nu_0}$
and $x^*_{+}(2,q)\le C\ln|q|$ to obtain
\bbe\label{bound_g01}
|\Re g_{01}(q,\xi)|< \frac{|\de_-|}{\pi}(1+\ln|\xi|)+C\ln\ln|q|.
\ee
 Finally, $\Re\psi(\eta_-(x^*_{+}(2,q)))=\ln(|q|/2)$, and the argument of the difference
in the square bracket in the formula for $g_{00}(q,\xi)$ is less than $\pi$ in absolute value if $|q|$ is large. Hence,
\bbe\label{bound_g00}
|\Re g_{00}(q,\xi)|< \frac{1}{2}\ln|q|+C\ln\ln|q|.
\ee
Substituting \eq{bound_g00}, \eq{bound_g01}, \eq{bound_g02} and \eq{bound_g03} into the rightmost part of \eq{eq:g0}, we obtain
\bbe\label{bound_g0}
|\Re g_{0}(q,\xi)|< \frac{1}{2}\ln|q|+C\de^*(1+\ln|\xi|)+C\ln\ln|q|.
\ee

Consider $g_1(q,\xi)$. In view of Proposition \ref{prop:facts} (4),  the measure $d\Re\psi(\eta_-(x))$ is positive on $[R_1,+\infty)$ and negative on $(-\infty,-R_1]$,
where $R_1$ can be chosen the same for all $q,\xi$. Therefore, there exists $C>0$ independent of $q,\xi$ such that
\bbe\label{Re_b00}
|\Re g_1(q,\xi)|\le C+\frac{1}{2\pi}\pi (\Re\psi(\eta_-(x^*_{-,+}(2,q)))+\Re\psi(\eta_-(x^*_{-,-}(2,q))).
\ee
Since $2=|q|e^{-\Re\psi(\eta_-(x^*_{-,\pm}(2,q)))}$, there exists $C_1$ independent of $q,\xi$ such that
 \bbe\label{eq:bound_Re_b_pm_00}
 |\Re g_1(q,\xi)|\le C_1+\ln|q|.
 \ee
 Using  \eq{eq:bpJ102a}, \eq{eq:bound_Re_b_pm_00} and \eq{bound_g0}, we obtain 
 \bbe\label{bound_bpm10}
|\Re(b^{+,+}_{q;0}(\xi)+b^{+,-}_{q;0}(\xi))|\le C_1+\frac{3}{2}\ln|q|+C_2\ln\ln|q|.
\ee
 Consider $b^{+,+}_{q,1}(q,\xi)$ in the following three cases: 
 $|\xi|\ge 2 x^*_{+}(2,q)$, $|\xi|\le R_1$ and $R_1\le |\xi|\le 2 x^*_{+}(2,q)$. If $|\xi|\ge 2 x^*_{+}(2,q)$, then the (absolute value of the) integrand on the RHS of \eq{eq:bpJ11}
 admits an upper bound via $C_3 x^{-1}$, hence, 
 \bbe\label{bound_b_aux}
 |\Re b^{+,+}_{q;1}(\xi)|\le C_4+C_3 \ln  x^*_{+}(2,q)\le C_5 \ln\ln |q|.
 \ee
Let $|\xi|\le R_1$. Since $\cL(\de_+,u_+)\cap \cL(\de_-,u_-)=\emptyset$, the RHS of \eq{eq:bpJ11} is bounded by a constant 
 $C(R_1)$ independent of $q$ and $\{\xi\ |\ |\xi|\le R_1\}$. Similarly, the integral over $\{|\eta_-|\le R_1\}$ is bounded by a constant $C(R_1)$ independent of $q$ and $\xi\in\cL(\de_+,u_+)$. Below, we consider $\{\xi\ |\ |\xi|\ge R_1\}$ and the integrals over $J^{+}_{q; R_1}:=\{x\ |\ R_1<x<x^*(2,q)\}$. If $R_1$ is sufficiently large, $\xi$ and $\eta_-(x), |x|\ge R_1,$ are of the form
 $\xi=x'+iy(\de_+,x')$ and $\eta_-(x)=x+iy(\de_-,x)$, where $y_x(\de,x)\to 0$ as $x\to\pm \infty$. Furthermore,  $(\eta_-)'(x)=1+o(1)$ as $x\to\pm\infty$. Hence, it suffices to derive bounds for
 \[
 g(J, q,\xi)=\int_J\left|\frac{x'+iy(\de_+,|x'|)}{(x+iy(\de_-,x))(x'-x+i(y(\de_+,|x'|)-y(\de_-,x)))}\right|dx,
 \]
where $J=J^{+}_{q; R_1}$. 
 Fix $C>0$. Since $J^{+}_{q; R_1}$  is a subset of  the union of intervals $J_{C,j}, j=1,2,3$, defined
 by the conditions $R_1\le x\le x'(1-C(x')^{-\nu_0})$; $x'(1-C(x')^{-\nu_0})\le x\le  x'(1+C(x')^{-\nu_0})$;
 $ x'(1+C(x')^{-\nu_0})\le x\le 2 x^*_{-,+}(2,q)$, respectively, it suffices to derive bounds for  $g(J_{C,j}, q,\xi), j=1,2,3.$
 On $J_{C,3}$, the integrand is bounded by $1/|x-x'|$, hence, 
 $
 |g(J_{C,3},q,\xi)|$ admits the bound \eq{bound_b_aux}.  On $J_{C,2}$, the integrand is bounded by $C_7/|x'|^{1-\nu_0}$ since
 $|y(\de_+,x)-y(\de_-,x)|\ge cx^{1-\nu_0}$.
Hence, $
 |g(J_{C,2},q,\xi)|\le (C_1/|x'|^{1-\nu_0})2C|x'|^{1-\nu_0}=2CC_1$. On $J_{C,1}$, the integrand is bounded by 
 $C_2x'/(x(x'-x)$. Integrating the fraction $x'/(x(x'-x))$, we obtain that  $
 |g(J_{C,1},q,\xi)|$ admits the bound \eq{bound_b_aux}.  
 Thus,
 \bbe\label{bound_bpm_1}
 |\Re b^{+,+}_{q;1}(\xi)|\le C+C_1\ln\ln|q|.
 \ee
 The same bound for $ \Re b^{+,-}_{q;1,-}(\xi)$ holds, and it is simpler to derive because, in the proofs of the bounds 
 for $|\Re b^{+,+}_{q;1}(\xi)|$, we can use $|x-x'|\ge c(x'+|x|)$.
  Using \eq{bound_bpm10}, \eq{bound_bpm_1} and \eq{repr:b1pm}, we obtain
\bbe\label{J1}
\left|\Re( b^+_q(J^{+,1}_{\eps, q}; \xi)+b^+_q(J^{-,1}_{\eps, q}; \xi)) \right|\le C+1.5\ln|q|+C_1\ln\ln|q| + C_2\de^*\ln(1+|\xi|),
\ee
 where $C,C_1, C_2>0$ are independent of $q,\xi$, and $C_1,C_2$ are independent of $\de^*$.

  \vskip0.1cm
\noindent
{\sc 
Bounds for  $ \sum_{j=2,3}(b^{+,n}_{q}(J^{+,j}_{q};\xi))+b^{+,n}_{q}(J^{-,j}_{q};\xi))$.} Using \eq{int_by_part}, 
we obtain
\beqast\label{sum_4}
&& \sum_{j=2,3}(b^{+,n}_{q}(J^{+,j}_{q};\xi)+b^{+,n}_{q}(J^{-,j}_{q};\xi))\\\nonumber
&=&
\frac{1}{2\pi i}\ln(1-q)\left\{\ln\frac{\eta_-(x^*_{+}(2,q))}{\eta_-(x^*_{+}(2,q))-\xi}-
\ln\frac{\eta_-(x^*_{-}(2,q))}{\eta_-(x^*_{-}(2,q)))+\xi}\right\}.
\eqast
In the regions $\{|\xi|\ge 2|\eta_-(x^*_{+}(2,q))|\}$ and $\{|\xi|\le 0.5|\eta_-(x^*_{+}(2,q))|\}$, the
function in the curly brackets is uniformly bounded, and in the region $\{0.5|\eta_-(x^*_{+}(2,q))|\le |\xi|\ 2|\eta_-(x^*_{+}(2,q))|\}$,
the function admits a bound via $\ln x^*_{+}(2,q)^{\nu_0}=O(\ln\ln |q|).$ It follows that 
\bbe\label{sum_4_b}
|\Re\sum_{j=2,3}(b^{+,n}_{q}(J^{+,j}_{q};\xi)+b^{+,n}_{q}(J^{-,j}_{q};\xi))|\le C+\ln|q|+C_1\ln\ln|q|.
\ee

\vskip0.1cm
\noindent
{\sc Bounds for $b^{+, d}_q(J^{\pm,2}_q, \xi)$.}
Since $|b^{+, d}_q(J^{\pm,2}_q, \xi)|$ admit an upper bound 
\[
|b^{+, d}_q(J^{\pm,2}_q, \xi)|\le C \int_{J^{\pm,2}_{q}}\left|\frac{\xi}{\eta_-(x)(\xi-\eta_-(x))}(\eta_-)'(x)\right|dx,
\]
essentially the same proof as the proof of the bound \eq{bound_bpm_1} for $|b^{+,\pm}_{q;1}(\xi)|$, that starts
with the similar bound \eq{eq:bpJ11},  gives
the bound 
\bbe\label{bound_bd2pm}
|b^{+, d}_q(J^{\pm,2}_q, \xi)|\le C+C_1\ln\ln |q|.
\ee

\vskip0.1cm
\noindent
{\sc Bound for $b^{+, d}_q(J^{+,3}_q, \xi)$.} For $\xi\in\{\xi\in\cL(\de_+,u_+)\ |\ \Re\xi>0, |\xi|\le x^*_+(1/2,q)/2\}$, we have a bound of the form
\beqast
|b^{+, d}_q(J^{+,3}_q, \xi)|&\le &C\int_{x^*_+(1/2,q)}^{+\infty}\left|\frac{qe^{-\psi(\eta_-(x))}}{\eta_-(x)^{1-\nu_0}}\eta'_-(x)\right|dx\\
&=& C\int_{x^*_+(1/2,q)}^{+\infty}|q|\frac{e^{-\Re\psi(\eta_-)}(\Re\psi(\eta_-(x)))'}{\Re\psi(\eta_-(x)))'x^{\nu_0-1}}dx\\
&\le& C_1\int_{x^*_+(1/2,q)}^{+\infty}|q|e^{-\Re\psi(\eta_-)}(\Re\psi(\eta_-(x)))'dx=C_1/2,
\eqast
where $C_1$ is independent of $q,\xi$. Here we used $(\Re\psi(\eta_-(x)))'\sim d_0\nu_0 x^{\nu_0-}$ as $x\to+\infty$.

For $\xi\in\{\cL(\de_+,u_+)\ |\ \Re\xi>0, |\xi|> x^*_+(1/2,q)/2\}$, we fix $\eps\in (0,1)$, and represent the interval of integration $J^{+,3}_q$ as the union of subsets $J^{+,3;j}_{q;\eps}\subset J^{+,3}_q, j=1,2,3$, where $J^{+,3;j}_{q;\eps}$, $j=1,2,3$,
are defined by 
conditions $x<|\xi|(1-\eps)$, $|\xi|(1-\eps)\le x\le |\xi|(1+\eps)$ and $x>|\xi|(1+\eps)$, respectively. Depending on $\xi$, one of $J^{+,3;j}_{q;\eps}, j=1,2$, or both can be empty. For $j=1,3$, we have a bound
\beqast
|b^{+, d}_q(J^{+,3;j}_{q;\eps}, \xi)|&\le &\frac{C_1}{\eps}\int_{x^*_+(1/2,q)}^{+\infty}|q|\frac{e^{-\Re\psi(\eta_-(x))}(\Re\psi(\eta_-(x)))'}{
\Re\psi(\eta_-(x)))' x}dx\le C_2|q|\\
&\le & \frac{C_2}{\eps 2 x^*_+(1/2,q)}=o(1), |q|\to \infty.
\eqast
Next, similarly to the bound for $\xi\in\{\xi\in\cL(\de_+,u_+)\ |\ \Re\xi>0, |\xi|\le x^*_+(1/2,q)/2\}$, we obtain that
$
|b^{+, d}_q(J^{+,3;2}_{q;\eps}, \xi)|$ is uniformly bounded. We conclude that
\bbe\label{bound_bd3pm}
|b^{+, d}_q(J^{+,3}_q, \xi)|\le C.
\ee
The proof of the same bound for $|b^{+, d}_q(J^{+,3}_q, \xi)|$ is similar but simpler because we can use the boiund $|\xi-\eta_-(x)|\ge c(|\xi|+|\eta_-(x)|)$. 
Gathering bounds \eq{bound_bd3pm}, \eq{bound_bd2pm}, \eq{sum_4_b}, \eq{J1}, \eq{bound_bpm10}, \eq{eq:bound_eps_bpp}
and taking \eq{sum_bp} into account, we obtain \eq{eq:bound_Re_b_pm}.

 \end{document}